\documentclass{article}

\usepackage{arxiv}

\RequirePackage{amsthm,amsmath,amsfonts,amssymb,dsfont, amstext}
\RequirePackage[numbers]{natbib}
\usepackage{xr-hyper}
\usepackage{xcolor}
\RequirePackage[colorlinks,citecolor=blue,urlcolor=blue]{hyperref}

\RequirePackage[utf8]{inputenc}
\RequirePackage[OT1]{fontenc}

\RequirePackage{graphicx}

\RequirePackage{color}
\RequirePackage{verbatim}
\RequirePackage{enumitem}    
\usepackage{filecontents}

\usepackage{mathrsfs} 
\usepackage{bm}
\usepackage{changepage}

\usepackage[sectionbib]{bibunits}
\usepackage{nameref}
\usepackage[nameinlink,noabbrev,capitalise]{cleveref}
\usepackage[normalem]{ulem}

\theoremstyle{plain}

\newtheorem{theorem}{Theorem}[section]
\newtheorem{lemma}[theorem]{Lemma}
\newtheorem{corollary}[theorem]{Corollary}
\newtheorem{proposition}[theorem]{Proposition}
\theoremstyle{remark}

\newtheorem{example}{Example}

\newtheorem{remark}[theorem]{Remark}
\newtheorem{assumption}[theorem]{Assumption}
\newcommand{\ds}[1]{\textcolor{black}{#1}}

\usepackage{color}
\usepackage{verbatim}
\usepackage{algorithmic}
\usepackage{enumitem}
\usepackage[varg]{txfonts}    

\renewcommand{\mathbb}{\varmathbb} 


\renewcommand{\leq}{\leqslant}

\renewcommand{\geq}{\geqslant}


\usepackage{bm}

\usepackage{xspace}



\usepackage{boxedminipage}





\newcommand{\brac}[1]{[#1 ]}
\newcommand{\Brac}[1]{\left[#1\right]}




\newcommand{\norm}[1]{\left\lVert#1\right\rVert}










\newcommand{\N}{{\mathbb N}}
\newcommand{\R}{\mathbb R}


\newcommand{\Esymb}{\mathbb{E}}
\newcommand{\Psymb}{\mathbb{P}}

\DeclareMathOperator*{\E}{\Esymb}

\DeclareMathOperator*{\ProbOp}{\Psymb}



\newcommand{\Prob}[1]{\ProbOp\Brac{#1}}
\newcommand{\probz}[1]{\mathbb{P}_0\brac{#1}}
\newcommand{\Probz}[1]{\mathbb{P}_0\Brac{#1}}

\newcommand{\Probf}[1]{\mathbb{P}_f\Brac{#1}}


\newcommand{\Exz}[1]{\mathbb{E}_0\Brac{#1}}

\newcommand{\Exf}[1]{\mathbb{E}_f\Brac{#1}}

\newcommand{\e}{\epsilon}







\let\e\epsilon









\newcommand{\eve}{\Tilde{\Omega}_T}

\PassOptionsToPackage{unicode}{hyperref}
\PassOptionsToPackage{naturalnames}{hyperref}

\makeatletter

\newcommand*{\addFileDependency}[1]{
\typeout{(#1)}
%
%
\@addtofilelist{#1}
%
\IfFileExists{#1}{}{\typeout{No file #1.}}
}\makeatother

\newcommand*{\myexternaldocument}[1]{%
\externaldocument{#1}%
\addFileDependency{#1.tex}%
\addFileDependency{#1.aux}%
}

\myexternaldocument{supplement}


\title{Bayesian estimation of nonlinear Hawkes processes}

\author{
 D\'eborah Sulem \\ 
 Department of Statistics\\
 University of Oxford\\
 \texttt{deborah.sulem@stats.ox.ac.uk} \\
  \And
  Vincent Rivoirard \\
  Ceremade, CMRS, UMR 7534 \\
  Universit\'e Paris-Dauphine, PSL University \\
 \texttt{vincent.rivoirard@dauphine.fr}
   \And
  Judith Rousseau \\ 
 Department of Statistics\\
 University of Oxford\\
 \texttt{judith.rousseau@stats.ox.ac.uk} \\
}

\begin{document}

\maketitle

\begin{abstract}

Multivariate point processes (MPPs) are widely applied to model the occurrences of events, e.g., natural disasters, online message exchanges, financial transactions or neuronal spike trains. In the Hawkes process model, the probability of occurrences of future events depend on the past of the process. This model is particularly popular for modelling interactive phenomena such as disease expansion.
In  this  work  we  consider  the nonlinear multivariate Hawkes model, which allows to account for \emph{excitation} and \emph{inhibition} between interacting entities. We provide theoretical guarantees for applying nonparametric Bayesian estimation methods in this context. In particular, we obtain concentration rates of the posterior distribution on the parameters, under mild assumptions on the prior distribution and the model. These results also lead to convergence rates of Bayesian estimators. Another object of interest in event-data modelling is to infer the \textit{graph of interaction} - or Granger causal graph. In this case, we provide consistency guarantees; in particular, we prove that the posterior distribution is consistent on the graph adjacency matrix of the process, as well as a Bayesian estimator based on an adequate loss function.\
\end{abstract}





\section{Introduction}

\subsection{Nonlinear Hawkes processes}\label{section:def_hawkes}

The Hawkes model is a popular temporal point process (PP)  for modelling the occurrences of event-type phenomena. Extending the Poisson cluster process \cite{moller2005perfect}, this model allows the probability of occurrence of a new event to depend on the history of the process. The first construction by Hawkes \cite{hawkes71} \ds{aimed at modelling} the \textit{self-excitatory} behaviour of earthquakes' strikes with aftershocks, and is called the 
  \emph{linear Hawkes process}. Since then, it has been extensively used, partly due to its interpretable parameters and branching structure representation \cite{reynaud06}. This notably leads to tractable inference and simulation methods \cite{bacry2015sparse, Chen2017,  Hansen:Reynaud:Rivoirard}.

Hawkes processes have been largely and successfully applied in various contexts of correlated event-data, including online social popularity \cite{farajtabar2016coevolve}, stock prices moves \cite{embrechts2011multivariate}, topic modelling \cite{du15}, DNA motifs occurrences \cite{carstensen2010multivariate, gusto:hal-02682109,  Reynaud_Bouret_2010}, and neuronal activity modelling \cite{MR961117, JMM, MR3197017}. They are used to infer both diffusion phenomena on networks and the structure of time-dependent networks \cite{Miscouridou2018}. Related and extended models include the mutually-regressive PP \cite{apostolopoulou2019mutually}, the age-dependent \cite{raad2020stability} and marked \cite{karabash2015limit} Hawkes processes, the dynamic contagion process 
\cite{dassios_zhao_2011}, the reactive PP \cite{ertekin2015}, the self-correcting PP \cite{Isham1979} and the Dirichlet-Hawkes process \cite{du15}. More recently, neural point processes inspired by the Hawkes model have also been proposed \cite{du2016recurrent, mei2017neural}.

In a multivariate temporal PP,  each dimension represents an entity, a location or a type of event - it is equivalent to a \emph{marked} point process with finite mark space. For $K \in \mathbb{N} \backslash \{0\}$, the PP can be described as a counting process $N = (N_t)_t = (N_t^1, \dots, N_t^K)_{t \geq 0}$, where $N_t^k$ denotes the number of events that have occurred until time $t$ at location $k$. Its dynamics are characterised by a conditional intensity function $(\lambda_t)_t = (\lambda_t^1, \dots, \lambda_t^K)_{t\geq 0}$, which is informally the infinitesimal rate of event conditionally on the past of the process, i.e, for $ k=1,\dots,K$,
$
\lambda_t^k dt = \Prob{N_t^k \: \text{has a jump in } \: [t,t+dt]|\mathcal{G}_{t}}, 
$
where $\mathcal{G}_{t}$ is the history of the process up to time $t$. In the nonlinear Hawkes model, only one dimension $N^k$ of the process can jump at each time $t$ and the intensity process has the following form
\begin{equation}\label{def:NLintensity}
    \lambda_t^k = \phi_k \left( \nu_k + \sum_{l=1}^{K} \int_{-\infty}^{t^-} h_{lk}(t-s)dN_s^l \right), \quad k=1,\dots,K.
\end{equation}
In \eqref{def:NLintensity}, the parameter $\nu_k > 0$ denotes the \textit{background} - or \textit{spontaneous} - rate of events, and models exogeneous influences. The endogenous effects on the process are parametrised by \textit{interaction functions} $(h_{lk})_{l,k=1}^K$ - or \emph{triggering kernels}. More precisely, for $(l,k) \in [K]^2$, the function $h_{lk}: \R \to \R$ models the influence of component $N^l$ onto component $N^k$. It can be decomposed into an \emph{excitating} contribution 
($h_{lk}^+ = \max(h_{lk},0)$) and an \emph{inhibiting} contribution
($h_{lk}^- = \max(-h_{lk},0)$). Finally, the \textit{link} or \emph{activation function} $\phi_k: \mathbb{R}\to \mathbb{R}^+$ ensures that the intensity is a non-negative process, and is generally chosen to be monotone non-decreasing. If all the interaction functions $h_{lk}$ are non-negative and all the link functions equal the identity functions, \eqref{def:NLintensity} corresponds to the linear Hawkes model. 

The dependence on past events in the intensity \eqref{def:NLintensity} leads to a notion of \emph{causality}. For Hawkes processes,  a Granger-causal relationship between two components of the process corresponds to a non-null interaction function \cite{eichler2016graphical}. We can define the \emph{connectivity graph} parameter $\delta \in \{0,1\}^{K^2}$ such that for each $(l,k)$, $\delta_{lk}=1$ if the function $h_{lk}$ in \eqref{def:NLintensity} is non null and $\delta_{lk}=0$ otherwise. \ds{We note that this parameter is redundant with $(h_{lk})_{l,k=1}^K$.}



To the best of our knowledge, the estimation of the parameters of nonlinear Hawkes processes $\nu =(\nu_k)_k, \: h=(h_{lk})_{l,k=1}^K, \: \delta = (\delta_{lk})_{l,k=1}^K$ - as well as additional parameters of the link functions $(\phi_k)_k$ has not been theoretically analysed, neither in the frequentist nor in the Bayesian frameworks. In the nonparametric setting, the existing results apply to linear Hawkes processes for the estimation of $(\nu, h)$ \cite{donnet18} and for the estimation of the connectivity graph $\delta$ \cite{Hansen:Reynaud:Rivoirard, Chen2017}. In the nonlinear model, \cite{chen17b}  study the estimation of the cross-covariances of the process, and \cite{wang2016isotonic} estimate a piecewise-constant link function assuming a parametric form on the interaction functions.

In this work, we analyse the theoretical properties of Bayesian methods for estimating $\nu$, $h$, $\delta$ and additional parameters of the nonlinear functions $(\phi_k)_k$. We consider a prior distribution on the parameters, say $\Pi$, and our aim is to study  posterior concentration rates  in such models. More precisely, we wish to determine $\epsilon_T = o(1)$ and conditions on the model and on $\Pi$ such that 
$$ \mathbb E_{f_0} [\Pi( d( f, f_0 ) > \epsilon_T | N )] \xrightarrow[T \to \infty]{} 1,$$
where $f = (\nu, h)$, $d(.,.)$ is some loss function on the parameter space,  and $\Pi(.| N)$ denotes the posterior distribution given an observation of the process on $[0,T]$. 
 \ds{In the last equation, we assume that the data $N$ is generated by a Hawkes process with \emph{true parameter}$f_0$, and we denote ${\mathbb P}_{f_0}$ its generating distribution and $\E_{f_0}$  the associated expectation.}
In particular, a consequence of such result is the construction of estimators on $\nu, h$ which converge in the frequentist sense at the rate $\epsilon_T$. \ds{We also obtain posterior consistency results on the graph parameter  $\delta$, and construct a consistent risk-minimising estimator.} 

\subsection{Related works}


There is  a rich literature on Hawkes processes in probability, statistics, and more recently in machine learning and deep learning. The stability properties of the nonlinear Hawkes model have been studied under several assumptions \cite{bremaud96, karabash2013stability}, together with the rate of convergence to the stationary solution \cite{bremaud2002rate} and the Bartlett spectrum \cite{massoulie98}. Regenerative properties of Hawkes processes were investigated for the models with finite \cite{costa18} and infinite \cite{Graham2019, Raad2019} memory. Recently \cite{BacrySPA2013, gao2017asymptotic, Gao_2018} derived functional central limit theorems and large deviations principles for ergodic processes. Malliavin-Stein calculus was applied by \cite{torrisi2016, torrisi2017} to establish Gaussian and Poisson approximations of functionals of the linear Hawkes process, and later by \cite{hillairet2021} to obtain Berry-Ess\'een bounds. Stationary distributions of high dimensional Hawkes processes were also studied, notably in the mean-field limit \cite{Delattre2016, delattre2016b, raad2020stability}.

Many statistical works have been dedicated to designing robust and efficient estimation procedures in the linear Hawkes model. In the seminal work of \cite{ogata1988statistical}, the interaction functions are given in a parametric form and estimated by maximising the likelihood function. In parametric models, an Expectation-Maximisation algorithm was proposed in \cite{veen2008estimation} to compute the maximum likelihood estimator while MCMC methods were designed for sampling from the posterior distribution \cite{Rasmussen2013}. The EM algorithm was extended by \cite{lewis2011nonparametric} to nonparametric Hawkes models using a penalised likelihood objective. Another nonparametric approach was introduced by \cite{Reynaud_Bouret_2010} for the linear univariate model by using a model selection strategy. In the multivariate Hawkes model, Lasso-type estimates were designed by \cite{Hansen:Reynaud:Rivoirard}. Still for linear models, Bayesian approaches have also been implemented for nonparametric Hawkes models, see for instance \cite{du15}.
In \cite{donnet18} the authors study asymptotic properties of the posterior distribution in the linear model.

Causality graphs for discrete-time events were introduced by  \cite{granger1969investigating} and extended to marked point processes by  \cite{Didelez2007}, with an explicit definition in the case of multivariate Hawkes processes by \cite{eichler2016graphical}. 
In linear parametric models, some approaches optimise a least-square objective based on the intensity process \cite{bacry2015sparse, BacrySPA2013}. For nonparametric Hawkes processes, \cite{Xu2016} 
apply an EM algorithm based on a penalized likelihood objective leading to temporal and group sparsity. Still in the linear model, Lasso-type estimates proposed by \cite{Hansen:Reynaud:Rivoirard} for nonparametric Hawkes processes naturally lead to sparse connectivity graphs. This procedure has been generalised to high-dimensional processes by \cite{Chen2017} by adding an edge screening step.


\subsection{Our contributions}\label{sec:contributions}
This paper considers a general multivariate Hawkes model with a nonlinear and nonparametric form of the intensity function,
 and provides theoretical guarantees on Bayesian estimation methods. We cover  a large range of  link functions $\phi_k$, 
 which covers most of the nonlinear Hawkes models considered in the literature \cite{costa18, Hansen:Reynaud:Rivoirard, gerhard17, carstensen2010multivariate, chen17b, Menon2018ProperLF, mei2017neural, deutsch2022, truccolo2005}, 
 such as the ReLU $\phi_k(x)=(x)_+=\max(x,0)$, clipped exponentials $\phi_k(x)=\min(e^x, \Lambda)$, the sigmoid  $\phi_k(x)=(1 + e^{- x})^{-1}$, and the softplus $\phi_k(x)=\log(1 + e^x)$. 
These models have been notably introduced for neuronal spike-train data modelling, where intense-activity periods alternate with resting states called \textit{refractory periods}\footnote{A refractory period is a time interval during which a neuron is unlikely to emit a spike train.}.  The ReLU function directly extends the original linear Hawkes model to handle negative interaction functions. In  \cite{Hansen:Reynaud:Rivoirard, costa18} it is called the \emph{standard} nonlinear Hawkes model, as it is the closest to the linear Hawkes process. Exponential and sigmoidal functions appear in several applied works \cite{gerhard17, carstensen2010multivariate}, where smoothness, saturation and thresholding effects are desirable properties. The softplus function is often preferred in machine learning algorithms as a soft approximation of ReLU \cite{mei2017neural}.

The first question to answer is the identifiability of $f = (\nu, h)$,   which is treated in Section \ref{sec:identifiability}. Building on these results, we study posterior concentration rates  in terms of the $L_1$-norm on $f$ in Section \ref{sec:post_conc}. Our aim is to describe the posterior concentration rates in terms of conditions on the prior $\Pi$ and on the true parameter $f_0 = (\nu_0 = (\nu_k)_k, h_0 = (h_{lk}^0)_{l,k})$ which are  simple to verify and under rather weak assumptions on the link functions.  Interestingly, we eventually reduce the problem to conditions on the prior and the $f_0$ similar to those found in the literature on density and nonlinear regression estimation (see Theorem \ref{thm:conc_g}), which makes them easy to verify in a wide range of prior models. From this we derive convergence rates of Bayesian estimators of $f_0$  (Corollary \ref{cor:post_mean})  and posterior consistency on $\delta_0$ (Theorem \ref{thm:post_graph}), with $\delta_0$ the true graph parameter associated to $h_0$.

We also extend our results to the case where the link functions are partially unknown, in the special case of shifted ReLU  link functions. More precisely we consider models in the form 
$\phi_k(x) = \theta_k + (x)_+$, with $\theta_k>0$ unknown. For such models we show identifiability of the parameters $(f, \theta), \: \theta = (\theta_k)_{1\leq k\leq K}$ and derive a general posterior concentration rate result similar to Theorem \ref{thm:conc_g} on both $f$ and $\theta$.


\ds{To the best of  our knowledge, these results are the first theoretical properties on the nonparametric estimation of both $f_0$ and $\delta_0$ in the frequentist and Bayesian literature of nonlinear Hawkes processes. Besides, for partially known link functions, in the particular setting of the shifted ReLU model, we also provide the first result on the estimation of the additional parameter $\theta$. We note that recently, computational methods for a related setting have been developed in  \cite{zhou2021nonlinear, zhou2021efficient, malemshinitski2021nonlinear, zhou2021jmlr}. In the latter works, a sigmoidal nonlinear Hawkes model is defined with $\phi_k(x) = \theta_k (1 + e^{-x})^{-1}$ and unknown parameter $\theta = (\theta_k)_k$. However, although the theoretical analysis of the latter model 
is beyond the scope of this paper, it is similar in spirit to our models. In fact, our techniques could potentially be applied to this multiplicative parametrisation, which we leave for future work.}

\ds{Our results are related to those of \cite{donnet18}, obtained in the case of linear Hawkes processes. However, the analysis of the process and  our proofs for estimating the parameter rely on \emph{renewal} properties, newly introduced by \cite{costa18} in the univariate ReLU nonlinear Hawkes model. One key novelty of our work is to leverage the concept of \emph{excursions} in the context of statistical analysis. This concept allows to decompose the trajectory of the process into independent, observable subintervals, and also to analyse the process on specific events where the parameter estimation is simplified. Developing these tools for nonlinear processes is fundamental since classical technical arguments used for linear Hawkes processes and based on Poisson branching structures cannot be applied in this case. We believe that these new proof techniques have an interest in themselves, in addition to  weakening some of the assumptions on the prior distribution considered in  \cite{donnet18}.}


The rest of the paper is organised as follows. In Section \ref{sec:problem}, we define the multivariate  stationary nonlinear Hawkes process, present the identifiability results and describe the Bayesian framework. Section \ref{sec:main_results} presents the posterior concentration results on $f$ and $\theta$ and consistency on $\delta$ results. Section~\ref{sec:priors} is dedicated to the construction of prior distributions that satisfy the assumptions of the theorems. The most novel aspects of the proofs are reported in Section \ref{sec:proof}. Appendix \ref{app:main_lemmas} contains some technical lemmas. Finally, supplementary proofs and results can be found in the supplementary material \cite{supplementary}.

\textbf{Notations. } For a function $h$, we denote $\norm{h}_1 = \int_{\mathbb R} |h(x)|dx$ the $L_1$-norm, $\norm{h}_2 = \sqrt{\int_{\mathbb R}h^2(x)dx}$ the $L_2$-norm, $\norm{h}_\infty = \sup \limits_{x\in \mathbb R} |h(x)|$ the supremum norm, and $h^+ = max(h,0), \: h^- = max(-h,0)$ its positive and negative parts. For a $K \times K$ matrix $A$, we denote $r(A)$ its spectral radius and $\norm{A}$ its spectral norm. 
For a vector $u \in \R^K, \norm{u}_1 = \sum_{k=1}^K |u_k|$. The notation $k \in [K]$ is used for $k \in \{ 1, \cdots, K\}$. For a set $B$ and $k \in [K]$, we denote $N^k(B)$ the number of events of $N^k$ in $B$ and $N^k|_B$ the point process measure restricted to the set $B$. For random processes, the notation $ \overset{\mathcal{L}}{=}$ corresponds to equality in distribution. 
We also denote $\mathcal{N}(u, \mathcal{H}_0,d)$ the covering number of a set $\mathcal{H}_0$ by balls of radius $u$ w.r.t. a metric $d$. For any $k \in [K]$, let $\mu_k^0 = \mathbb{E}_0[\lambda_t^k(f_0)]$ be the mean of $\lambda_t^k(f_0)$ under the stationary distribution $\mathbb{P}_0$. For a set $\Omega$, its complement is denoted $\Omega^c$. We also use the notations $u_T \lesssim v_T$ if $|u_T/v_T|$ is bounded when $T \to \infty$, $u_T \gtrsim v_T$ if $|v_T/u_T|$ is bounded and $u_T \asymp v_T$ if $|u_T/v_T|$ and $|v_T/u_T|$ are bounded.




\section{Problem setup}\label{sec:problem}

\subsection{Definition and stationary distribution}

In this section, we first recall the  formal definition of  a multivariate Hawkes process. We consider a probability space $(\mathcal{X}, \mathcal{G}, \mathbb{P})$ and a  MPP $N = (N_t)_{t \in \R} = (N_t^1, \dots, N_t^K)_{t \in \R}$. Let $\{\mathcal{G}_t\}_{t \in \R}$ be the filtration such that $\mathcal{G}_t = \sigma(N_s, s \leq t)$ and for $T>0$, we assume that $\mathcal{G}_T \subset \mathcal{G}$. We say that $(N_t)_t$ is a multivariate Hawkes process with parameter $f = ((\nu_k)_{k=1}^K, (h_{lk})_{l,k=1}^K, (\theta_k)_{k=1}^K)$ adapted to $\mathcal{G}$ if
\begin{enumerate}[label={\roman*)}]
    
    \item almost surely, $\forall k,l \in [K]$, $(N^k_t)_t$ and $(N^l_t)_t$ never jump simultaneously;
    
    \item for all $k \in [K]$, the $\mathcal{G}_t$-predictable intensity process of $N^k$ at $t \in \R$ is 
     given by
    \begin{equation*}
    \lambda_t^k(f) = \phi_k \left( \nu_k + \sum_{l=1}^{K} \int_{-\infty}^{t^-} h_{lk}(t-s)dN_s^l \right), \quad k=1,\dots,K.
\end{equation*}

\end{enumerate}
We consider finite-memory Hawkes processes for which interaction functions have a bounded support included in $[0,A]$ with $A > 0$ known - chosen arbitrarily large in practice. We recall that in \eqref{def:NLintensity}, if for all $k$, $\phi_k$ is the identity function and for all $l$, $h_{lk}$ is non-negative, this PP model corresponds to the classical linear Hawkes process with parameter $\nu = (\nu_k)_{k=1}^K$ and $h = (h_{lk})_{k,l=1}^K$ and intensity process:
    \begin{equation}\label{eq:lin_intensity}
        \Tilde{\lambda}_t^k(\nu, h) := \nu_k + \sum_{l=1}^{K} \int_{t-A}^{t^-} h_{lk}(t-s)dN_s^l.
    \end{equation}
With this notation, the nonlinear intensity can be written as $ \lambda_t^k(f) = \phi_k (\Tilde{\lambda}_t^k(\nu, h))$. For a nonlinear Hawkes process, the existence and uniqueness of a stationary distribution is proved under some assumptions on the parameters $f$ and the link functions $\phi=(\phi_k)_k$. In the following lemma, we provide two sufficient conditions, which are variants of existing work. We recall that a function $ \phi$ is $L$-Lipschitz, if for any $(x,x')\in\R^2$,
   $|\phi(x) - \phi(x')| \leq L |x - x'|$.
\begin{lemma}\label{lem:existence}
Let $N$ be a Hawkes process with parameter $f$ and link functions $(\phi_k)_k$ such that for any $k \in [K]$, $\phi_k: \mathbb{R} \to  \mathbb{R}^+ $ is monotone non-decreasing and $L$-Lipschitz, with $L > 0$. If one of the following conditions is satisfied:
\begin{itemize}
%
    
     \item [\textbf{(C1)}]  The matrix $S^+$ with entries $S_{lk}^+ = L \norm{h_{lk}^+}_1$ satisfies $r(S^+) < 1$;
  
     \item [\textbf{(C2)}] For any $k \in [K]$, $\phi_k$ is bounded, i.e., $\exists \Lambda_k > 0, \forall x \in \R$, $\phi_k(x) \leq \Lambda_k$. 
\end{itemize}
then there exists a unique stationary version of the process $N$ with finite average.  
\end{lemma}

In the previous lemma, the second stationarity condition \textbf{(C2)} directly comes from Theorem 7 by \cite{bremaud96} and is applied to our (less general) context of Lipschitz and non-decreasing link functions. The first condition \textbf{(C1)} is obtained in Theorem 1 of \cite{deutsch2022}, in a more restricted Hawkes model where $\phi_k(x) = (x)_+$ and the interaction functions are of the form $h_{lk} = K_{lk} g(t)$ with $g\geq0$ and $K_{lk} \in \mathbb{R}$, but the same arguments can be applied to prove the stationarity of the process in our more general nonlinear model. However, in the context of inference, we will consider a slightly stronger condition:

\vspace{1mm}

\textbf{(C1bis)} The matrix $S^+$ with entries $S_{lk}^+ = L \norm{h_{lk}^+}_1$ satisfies $\norm{S^+} < 1$.

\vspace{1mm}


From now on, we will assume that we observe on a window $[-A,T]$ a stationary Hawkes process with link functions $(\phi_k)_k$ and true parameters $f_0 = ((\nu^0_k)_{k=1}^K, (h^0_{lk})_{l,k=1}^K)$. We denote $\mathbb{P}_0$ the stationary distribution of $N$ and $\mathbb{P}_0(.|\mathcal{G}_0)$ its conditional distribution given $\mathcal{G}_0$. We note that $\mathbb{P}_0$ is a well-defined transformation of the probability distribution $\mathbb{P}$ (through its alternative definition in Lemma \ref{lem:stoc_domination} \cite{supplementary}). For $f = ((\nu_k)_{k=1}^K, (h_{lk})_{l,k=1}^K)$  satisfying the assumptions of Lemma \ref{lem:existence}, the log-likelihood of the processs on $[0,T]$ conditionally on $\mathcal{G}_0$ (i.e., conditionally on $N|_{[-A,0\textcolor[rgb]{0.5,0.2,0.4}{)}}$) is given by
    \begin{equation*}
        L_T(f)  := \sum_{k=1}^K \left[\int_0^T \log (\lambda_t^k(f)) dN_t^k - \int_0^T \lambda_t^k(f) dt\right].
    \end{equation*}
Then, for any parameter $f$, we define the conditional distribution $\mathbb{P}_{f}$ from the likelihood function
    \begin{equation}\label{eq:def_pf}
        d\mathbb{P}_{f}(.|\mathcal{G}_{0}) = e^{L_T(f) - L_T(f_0)}d\mathbb{P}_0(.|\mathcal{G}_{0}).
    \end{equation}
We denote $\mathbb{E}_0$ (resp. $\mathbb{E}_{f}$) the expectation associated with $\mathbb{P}_0$ (resp. $\mathbb{P}_{f}$). 
\subsection{Identifiability of the parameters}\label{sec:identifiability}
In this section, we provide some conditions on the model which ensure that the  parameters of the nonlinear Hawkes model defined in \eqref{def:NLintensity}  are identifiable. To do so we consider the following weak assumption.  

\begin{assumption}{}\label{ass:identif_f}
For $f = (\nu, h)$,   there exists $\varepsilon > 0$ such that for any $k \in [K]$, $\phi_k$ restricted to  $I_k = (\nu_k - \max \limits_{l \in [K]} \norm{h_{lk}^{-}}_\infty - \varepsilon, \nu_k + \max \limits_{l \in [K]} \norm{h_{lk}^{+}}_\infty + \varepsilon)$ is injective.

\end{assumption}

\begin{proposition}\label{lem:identif_param}
 Let $N$ be a  nonlinear Hawkes process  as defined in \eqref{def:NLintensity} with link functions $(\phi_k)_k$ and parameter $f = (\nu, h)$ satisfying the conditions of Lemma~\ref{lem:existence} and Assumption~\ref{ass:identif_f}. If $N'$ is a Hawkes processes with the same link functions $(\phi_k)_k$ and parameter $f' = (\nu', h')$,  then if $N$ and $N'$ have the same distribution, i.e., $ N \overset{\mathcal{L}}{=} N' $, then $ \nu = \nu'$ and $h = h'$.
\end{proposition}

 Note that if the $\phi_k$'s are injective on $\mathbb R$,  which holds in particular for the sigmoid and the softplus functions, then Assumption \ref{ass:identif_f}  is verified for all $f$. However our result is more general and also covers link functions which are only injective on a sub-interval of $\mathbb R$ such as ReLU or shifted ReLU  ($\phi_k(x) = \theta_k + \max(x,0))$ and clipped exponentials ($\phi_k(x) = \min(e^x, \Lambda_k)$ ). In this case,  Assumption \ref{ass:identif_f} holds over a restricted parameter space for $f$. More precisely, $\phi_k$ needs to be injective over an interval which includes  all the possible values of $\nu_k + h_{lk}(s)$, for any $l \in [K]$ and $s\in [0,A]$.


%

\begin{remark}
One consequence of  Assumption~\ref{ass:identif_f} is that for any $t > 0$ such that $N[t-A,t) \leq 1$, then  $\lambda^k_t(f) > 0$ (since $\phi_k$ is non-negative and monotone non-decreasing) for all  $k \in [K]$. However, Assumption \ref{ass:identif_f} still allows to model the \emph{refractory periods} of biological neurons, i.e., when the neurons cannot or are very unlikely to fire again during a period after firing. Indeed, one can have $\lambda_t^k(f) $ very small for $t$ such that $N^k[t-A,t) = 1$, depending on $f$ and $\phi_k$.
\end{remark}

Proposition \ref{lem:identif_param} supports the feasibility of the parameter estimation when the  nonlinear functions $\phi_k$'s are fully known. 
It can however be extended to the setup where the link functions are partially known. In the next proposition, we consider the case of $\phi_k(x) = \theta_k + \psi_k(x)$ where $\psi_k$ is a function such that $\lim \limits_{x \to -\infty} \psi_k(x) = 0$ and $\theta_k\geq 0$ is an unknown parameter, for each $k \in [K]$. In this case, we denote $\lambda_t(f, \theta)$ the intensity process.



\begin{proposition}\label{lem:identif_param2}
Let $N$ be a Hawkes process with parameter $f = (\nu, h)$ and link function $\phi_k(x; \theta_k) = \theta_k + \psi_k(x)$ with $\theta_k\geq 0$ for any $k\in [K]$ satisfying the conditions of Lemma~\ref{lem:existence} and Assumption~\ref{ass:identif_f}. We also assume that for all $k\in [K]$, $\lim \limits_{x \to -\infty} \psi_k(x) = 0$ and
 \begin{equation}\label{ass:identif_theta}
  \exists l\in [K], \, x_1<x_2, \quad \text{such that } \, h_{lk}^{-}(x)>0, \quad \forall x \in [x_1, x_2].
  \end{equation} 
 Then if $N'$ is a Hawkes processes with link functions  $\phi_k(x; \theta_k') = \theta_k' + \psi_k(x)$, $\theta_k'\geq 0$ and parameter $f' = (\nu', h')$,
\vspace{-3mm}\begin{align*}
 N \overset{\mathcal{L}}{=} N' \implies \nu = \nu', \quad  h = h', \quad \text{and} \quad \theta = \theta', \quad \theta = (\theta_k)_{k=1}^K, \quad \theta'=(\theta_k')_{k=1}^K.
\end{align*}
Besides, in this case we have $\mathbb{P}_f[\inf \limits_{t \geq 0}  \lambda^k_t(f, \theta) = \theta_k] = 1.$
\end{proposition}

The proofs of Propositions \ref{lem:identif_param} and \ref{lem:identif_param2} are reported in Section \ref{sec:proof:lem:existence} in the supplementary material \cite{supplementary}. In Proposition \ref{lem:identif_param2}, the condition \eqref{ass:identif_theta} implies that each component $k$ receives some inhibition (i.e., $\exists l, \: h_{lk}^{-} \neq 0$).  In particular, we will use this condition in the shifted ReLU model where $\psi_k(x) = (x)_+$. We note that $\theta_k$ is not identifiable when no inhibition is received by $N^k$ (i.e., when $\forall l, \: h_{lk}^- =  0$). More precisely, the following lemma - proved in Section \ref{sec:proof:lem:existence} in the supplementary material \cite{supplementary} - states that in a mutually-exciting ReLU model, 
 the parametrisation of the process is not unique. 
Informally, our models present a singularity at the parameter ``$h^- = 0$".

\begin{lemma}\label{lem:hawkes_exciting}
Let $N$ be a Hawkes process with parameter $f = (\nu, h)$ and link functions $\phi_k(x; \theta_k) = \theta_k + (x)_+, \: \theta_k \geq 0, \: k\in [K]$  satisfying Assumption \ref{ass:identif_f}, and let $k \in [K]$. If $\forall l \in [K], h_{l k} \geq 0$, then for any $\theta_k' \geq 0$ such that $\theta_k + \nu_k - \theta_k' > 0$, let $N'$ be the Hawkes process driven by the same underlying Poisson process $Q$ as $N$ (see Lemma \ref{lem:stoc_domination} \cite{supplementary}) with parameter $f' = (\nu', h')$ and link functions $\phi_k(x; \theta_k') = \theta_k' + (x)_+, k\in [K]$ with $\nu' = (\nu_1, \dots, \nu_k+ \theta_k - \theta_k', \dots, \nu_K) \neq \nu, \: h' = h$, and $\theta' = (\theta_1, \dots, \theta_k', \dots, \theta_K) \neq \theta$. Then for any $t \geq 0$,$\lambda^k_t(f, \theta) = \lambda^k_t(f', \theta')$, and therefore $N \overset{\mathcal{L}}{=} N'$.
\end{lemma}

\subsection{Bayesian inference}\label{sec:bayesian_inference}
We can now present our Bayesian estimation framework. We assume that the observed Hawkes process $N$ satisfies the conditions of Lemma~\ref{lem:existence}, i.e., the link functions $\phi_k$'s are monotone non-decreasing, $L$-Lipschitz with $L>0$ and either we consider a bounded model $\phi_k(x) \leq \Lambda, \forall k, \Lambda > 0$ (condition \textbf{(C2)}) or we assume $\norm{S_0^+} < 1$ (condition \textbf{(C1bis))} with $S_0^+ = (L \norm{h_{lk}^{0+}}_1)_{l,k \in [K]^2}$. We define the parameter space for $f = ((\nu_k)_{k=1}^K, (h_{lk})_{l,k=1}^K)$ and the functional space as follows. Let 
\begin{align*}
&\mathcal{H}' = \left \{ h: [0,A] \to \R; \: \|h \|_\infty < \infty \right \}, \quad \mathcal{H} = \left \{ h =  (h_{lk})_{l,k=1}^K \in \mathcal{H}'^{K^2}; \:  (h, \phi) \text{ satisfy  \textbf{(C1bis)} or  \textbf{(C2)} }  \right \}, \\
&\mathcal{F} = \left \{  f = (\nu, h) \in (\R_+\backslash\{0\})^K \times \mathcal{H}; \: f \text{ satisfies Assumption ~\ref{ass:identif_f} } \right \}.
\end{align*}
We recall that for an unbounded link function, condition \textbf{(C1bis)} corresponds to $\norm{S^+} < 1$ with $ S^+ = (L \norm{h^+_{lk}}_1)_{l,k \in [K]^2}$. 
We also recall that  $A > 0$ is fixed. In the graph estimation problem (see Section \ref{sec:post_consistency}), the  parameter of interest is $\delta_0 \in \{0,1\}^{K^2}$ where $h^0_{lk} = 0 \iff \delta^0_{lk}=0$. With a slight abuse of notations, we sometimes write $f=((\nu_k)_{k}, (h_{lk})_{l,k})_{k}, (\delta_{lk})_{l,k})$ with $\delta \in \{0,1\}^{K^2}$.

\begin{remark}
With ReLU-type link functions, we have $\mathcal{H} = \{h = (h_{l,k}) \in (\mathcal{H}')^{K^2}; \: \norm{S^+} < 1\}$ and $\mathcal{F} = \{
 f \in (\R_+\backslash\{0\})^K \times \mathcal{H}; \: \norm{h^-_{lk}}_\infty < \nu_k, \:  (l,k) \in [K]^2  \}$. With clipped exponential links $\phi_k(x) = \min(e^x, \Lambda_k)$, we have $\mathcal{H} =\mathcal{H}'^{K^2}$ and $\mathcal{F} = \left \{f  \in \R_+\backslash\{0\}^K \times \mathcal{H}'^{K^2};   \: \nu_k + \norm{h^+_{lk}}_\infty < \log \Lambda_k,  \:    (l,k) \in [K]^2 \right \}$.
\end{remark}


We now define our metric on the parameter space $\mathcal{F}$. For any $f = (\nu, h), \: f'=(\nu', h') \in \mathcal{F}$, we define the following $L_1$-distances:
    \begin{align*}
    &\norm{\nu - \nu'}_1 =  \sum_{k=1}^K |\nu_k - \nu_k'|, \quad \norm{h -h'}_1 =  \sum_{l=1}^K \sum_{k=1}^K \|h_{lk} - h_{lk}'\|_1, \quad \|f-f'\|_1 = \norm{\nu - \nu'}_1 + \norm{h -h'}_1.
    \end{align*}

Finally, we consider a prior distribution $\Pi$ on $\mathcal{F}$ and define the posterior distribution on $B \subset \mathcal{F}$ as
    \begin{equation}\label{def:pposterior_dist}
        \Pi(B|N) = \frac{\int_{B} \exp(L_T(f)) d\Pi(f)}{\int_{\mathcal{F}} \exp(L_T(f)) d\Pi(f)} = \frac{\int_{B} \exp(L_T(f) - L_T(f_0)) d\Pi(f)}{\int_{\mathcal{F}} \exp(L_T(f) - L_T(f_0)) d\Pi(f)} =: \frac{N_T(B)}{D_T},
    \end{equation}
denoting $N_T(B)$ and $D_T$ our numerator and denominator of the posterior with the form above.

\



\section{Main results}\label{sec:main_results}

In this section, we state our most important results on the posterior distribution on the parameter $f$ and the restriction on the connectivity graph $\delta$, leading respectively to convergence rates and consistency of some Bayesian nonparametric estimators.

\subsection{Posterior concentration rates}\label{sec:post_conc}

We first prove that under mild assumptions on the  link functions and the true parameter, we can describe the posterior concentration rate $\epsilon_T$ with respect to the $L_1$-distance on $\mathcal{F}$ defined in Section \ref{sec:bayesian_inference}, in terms of standard conditions on the prior. We then consider the case where the link functions $\phi_k$ depend on an unknown parameter, in the special case of shifted  ReLU: $\phi_k(x; \theta_k^0) = \theta_k^0 + (x)_+$, for which we prove posterior concentration on both $f_0$ and $\theta_0$. To do so, we use the following assumption on the true parameter, which is a strengthening of the identifiability condition in Assumption \ref{ass:identif_f}. 


\begin{assumption}\label{ass-psi} 
For $f_0 = (\nu_0, h_0)$, we assume that there exists $\varepsilon > 0$ such that for any $k \in [K]$, $\phi_k$ restricted to $I_k = (\nu^0_k - \max \limits_{l \in [K]} \norm{h_{lk}^{0-}}_\infty - \varepsilon, \nu^0_k + \max \limits_{l \in [K]} \norm{h_{lk}^{0+}}_\infty + \varepsilon)$ is bijective from $I_k$ to $J_k = \phi_k(I_k)$ and its inverse is $ L'$- Lipschitz on $J_k$, with $L' > 0$. We also assume that one of the two following conditions is satisfied:
\begin{enumerate}[label={\roman*)}]
	\item  For any $k \in [K]$, $ \inf \limits_{x \in \R}  \phi_k(x) >0$.
	\item  For any $k \in [K]$, $\phi_k > 0$ and $\sqrt{\phi_k}$   and  $\log \phi_k$ are $L_1$-Lipschitz  with $L_1 > 0$ . 
\end{enumerate}
\end{assumption}
The first part of Assumption \ref{ass-psi}, which is a slight strengthening of Assumption \ref{ass:identif_f}, holds in all cases described previously. The second part considers two cases: (i) the $\phi_k$'s are lower bounded by a positive constant, which holds for instance when $\phi_k(x; \theta_k) = \theta_k + \psi_k(x) $ with $\theta_k >0$ and $\psi_k \geq 0$ and (ii) the $\phi_k $'s can approach 0 but satisfy an additional smoothness condition which holds in particular if the derivatives $\phi_k'$ are bounded and the functions $\log \phi_k$'s are Lipschitz. It is notably the case for the commonly used Hawkes models \cite{costa18, Hansen:Reynaud:Rivoirard, gerhard17, carstensen2010multivariate, chen17b, Menon2018ProperLF, mei2017neural}, see Example \ref{rem:link_functions} below. Note that this assumption excludes the standard ReLU function $\phi_k(x) = (x)_+$, which we will treat separately in Proposition \ref{prop:relu}.


\begin{example}\label{rem:link_functions}
The following nonlinear models verify Assumption \ref{ass-psi}. Let  $s, t, \Lambda > 0$. 
\begin{itemize}
    \item \textbf{Positive or shifted ReLU}-type functions:  $\phi_1(x) = \max(sx,t) \geq t > 0$,  which is injective on $[t/s,+\infty)$, $s$-Lipschitz and its inverse  on $[t,+\infty)$, $\phi_1^{-1}(x) = s^{-1}x$ is $s^{-1}$-Lipschitz.
    \item \textbf{Clipped exponential} functions:  $\phi_2(x) = \min(e^{sx}, \Lambda)$, which is injective on $(-\infty, s^{-1} \log \Lambda]$ and 
    $s\Lambda$-Lispchitz. Its inverse on $(0,\Lambda]$, $\phi_2^{-1}(x) = s^{-1} \log x $ is Lipschitz on any compact of $(0,\Lambda]$ and $\sqrt{\phi_2}(x) = \sqrt{\min (e^{sx}, \Lambda})=\min (e^{sx/2}, \sqrt{\Lambda})$ and $\log \phi_2 = \min(sx, \log \Lambda)$ are respectively 
    $s\Lambda$--Lispchitz and $s$-Lipchitz;
    \item \textbf{Sigmoid}  functions: $\phi_3(x) = (1 + e^{- s(x - t)})^{-1}$, 
   which is injective on $\R$ and $s$-Lipschitz. Its inverse $\phi_3^{-1}(x) = t+\frac{1}{s}\log \frac{x}{1-x}$ is Lipschitz on any compact of $(0,1)$, $\sqrt{\phi_3}$ is $s$-Lipschitz and $\frac{\phi_3'(x)}{\phi_3(x)} \leq s$ thus $\log \phi_3$ is $s$-Lipschitz;
    \item \textbf{Softplus} functions: $\phi_4(x) = \log (1 + e^{s(x-t)})$, which is injective on $\R$, $s$-Lipschitz and its inverse $\phi_4^{-1}(x) = \frac{1}{s}\log (e^{x} - 1) + t$ is Lipschitz on any compact of $\R_+^*$ . Finally $\sqrt{\phi_4}$ and $\log \phi_4$ are $s$-Lipschitz.
\end{itemize}
\end{example}

To state our first result, we also define the following neighbourhoods in $f_0$ in  supremum and $L_2$-norms respectively, for $B > 0$:
\begin{align*}
&B_\infty(\epsilon_T) = \{f \in \mathcal{F}; \:  \nu_k^0 \leq \nu_k \leq  \nu_k^0 +  \epsilon_T, \, h_{lk}^0 \leq h_{lk} \leq  h_{lk}^0 + \epsilon_T, \:  (l,k) \in [K]^2\}. \\
&B_2(\epsilon_T, B) = \{f \in \mathcal{F}; \: \max_k |\nu_k - \nu_k^0| \leq \epsilon_T,  \: \max_{l,k} \|h_{lk} - h_{lk}^0\|_2 \leq \epsilon_T,  \: \max_{l,k} \|h_{lk}\|_\infty < B \}.
\end{align*}
In particular, $B_{\infty}(\e_T)$ is chosen so that for any $f \in B_{\infty}(\e_T)$, $k \in [K]$ and $t \geq 0$, the intensities verify $\lambda_t^k(\nu, h) \geq \lambda_t^k(\nu_0, h_0)$. 
Finally we define \begin{align} \label{kappaT}
\kappa_T = 10 (\log T)^r 
\end{align}
with $r=0$ if $(\phi_k)_k$ satisfies Assumption~\ref{ass-psi} (i), $r=1$ if $(\phi_k)_k$ satisfies Assumption~\ref{ass-psi} (ii). 

\begin{theorem}{}\label{thm:conc_g}
Let $N$ be a Hawkes process with known link functions $\phi = (\phi_k)_k$ and   parameter $f_0 = (\nu_0, h_0)$ such that $(\phi, f_0)$  satisfy Assumption~\ref{ass-psi}. Let $\epsilon_T = o(1/\sqrt{\kappa_T})$ be a positive sequence verifying $\log^3 T=O(T \epsilon_T^2)$ and $\Pi$ be a prior distribution on $\mathcal{F}$. We assume that the following conditions are satisfied for $T$ large enough.

\textbf{(A0)} There exists $c_1 > 0$ such that $\Pi(B_\infty(\epsilon_T)) \geq e^{-c_1T\epsilon_T^2}.$

\textbf{(A1)} There exist subsets $\mathcal{H}_T \subset \mathcal{H}$ and $c_2 > 0$ such that, with $\Upsilon_T = \{\nu = (\nu_k)_k, \: 0 < \nu_k \leq e^{c_2 T\epsilon_T^2}, \: \forall k\}$,
$ \Pi(\mathcal{H}_T^{c}) + \Pi(\Upsilon_T^c) = o(e^{- (\kappa_T +c_1) T\epsilon_T^2}).$

\textbf{(A2)} There exist $\zeta_0 > 0$ and $x_0 > 0$ such that $\log \mathcal{N}\left(\zeta_0 \epsilon_T, \mathcal{H}_T, ||.||_1\right) \leq x_0 T \epsilon_T^2.$

Then,  for $M>0 $ large enough, we have
\begin{equation}\label{conc:L1:1}
\mathbb{E}_0 \left[\Pi\big(\|f -  f_0\|_1 > M \sqrt{\kappa_T} \epsilon_T \big| N \big) \right]  = o(1).
\end{equation}

\end{theorem}

The proof of Theorem~\ref{thm:conc_g} is provided in Section \ref{sec:proof_conc_g}. 

\begin{remark}  \label{rk:B2} 
In Theorem~\ref{thm:conc_g}, if we replace $B_\infty( \epsilon_T)$ by $B_2(\epsilon_T,B)$ for some $B>0$ in \textbf{(A0)}, then the concentration rate in \eqref{conc:L1:1} is $\sqrt{\log \log T\kappa_T} \epsilon_T $ instead of $\sqrt{\kappa_T} \epsilon_T $. Replacing $B_\infty( \epsilon_T) $ by $B_2( \epsilon_T,B) $ can be useful for some families of priors, as seen in the case of mixtures of Beta distributions in Section \ref{sec:mixture} in the supplementary material \cite{supplementary}. 
\end{remark}

\begin{remark}
Our concentration rate in  \eqref{conc:L1:1} holds under the stationary distribution $\mathbb{P}_0$, implying in this case that the ``initial condition" $ N|_{[-A,0]} \subset \mathcal{G}_0 $ also comes from the stationary law. However, in practice, one might observe a process on $[-A,T]$ with an arbitrary distribution on $[-A,0]$. Under the conditions of Lemma \ref{lem:existence}, the dynamics of the resulting process are \emph{stable} (in the sense of Definition 1 of \cite{bremaud96}), using the results in \cite{bremaud96}. In particular, its distribution $\mathbb{P}_0(.|\mathcal{G}_0)$  converges exponentially fast to the stationary distribution $\mathbb{P}_0$. Therefore, we expect that  \eqref{conc:L1:1} would still hold under $\mathbb{P}_0(.|\mathcal{G}_0)$, i.e., under a more general initial distribution on $[-A,0]$.
\end{remark}

An interesting aspect of Theorem \ref{thm:conc_g} is that the assumptions on the prior \textbf{(A0)}, \textbf{(A1)} and \textbf{(A2)}, are similar to those of simpler estimation problems like density estimation, regression or linear Hawkes processes. This allows to directly derive explicit forms of the posterior concentration rates in the nonlinear Hawkes model under common families of priors, such as Gaussian processes, hierarchical Gaussian processes, basis expansions or mixture models (see \cite{vzanten:vdv:09, vvvz08, arbel, rousseau:09}  or Section 2.3.2 of \cite{donnet18} ).
 In Section~\ref{sec:priors}, we illustrate this using splines and mixture models. 
Additionally, our Theorem \ref{thm:conc_g} avoids the unpleasant assumption on the prior in Theorem 3 of \cite{donnet18} which requires that for some  $u_0 > 0$, 
$
     \Pi(\norm{S} > 1 - u_0 (\log T)^{1/6} \e_T^{1/3}) \leq e^{-2 c_1 T \e_T^2}.
$
This is thanks to our novel proof techniques using regeneration times under the true model $\mathbb{P}_0$ (see Section \ref{sec:lem_excursions}).

Theorem~\ref{thm:conc_g} provides posterior concentration rates for a large class of link functions, as discussed earlier. \ds{In particular, it covers the case of shifted ReLU link functions, i.e, $\phi_k(x) = \theta_k + (x)_+$ where $\theta_k > 0$ is a \emph{baseline} rate, which can be arbitrarily small. This link function can be seen as an alternative to the exponential function with positive baseline rate in \cite{gerhard17} 
In \cite{gerhard17}, neurons firing rates are modelled using nonlinear Hawkes processes with a positive link function, which still allows to account for the refractory periods of neurons (for which the firing rate is small). Moreover, while in the case of the shifted ReLU model, Theorem~\ref{thm:conc_g} assumes that the baseline rates $\theta = (\theta_k)_k$ are known, we show in the next proposition that we can also estimate $\theta$. Besides, we additionally provide a posterior concentration result when using the standard ReLU function $\phi_k(x) = (x)_+$, under a stronger assumption on the model.  We note that for this latter choice of link function, the intensity function is only \emph{non-negative} and the likelihood function is equal to 0 in parts of neighbourhoods of $h_0$, which causes several issues in the control of the Kullback-Leibler divergence.} 

Before stating our results, we define neighbourhoods in $\theta_0$, also in  supremum and $L_2$-norms, respectively $B^{\Theta}_\infty(\epsilon_T) = \{\theta \in \Theta; \:  \norm{\theta - \theta_0}_\infty  \leq  \epsilon_T \}$ and $B^\Theta_2(\epsilon_T, B) = \{\theta \in \Theta; \:  \norm{\theta - \theta_0}_2  \leq  \epsilon_T \}$, and in this case we define
$
\kappa_T = 10 (\log T)^r 
$
with $r=0$ in the shifted ReLU model (Case 2 of the following proposition) and  $r=2$ in the standard ReLU model (Case 1). 

\begin{proposition}\label{prop:relu}
Let $N$ be a nonlinear Hawkes process with link functions $(\phi_k)_k$ and parameter $f_0 = (\nu_0, h_0)$ satisfying  Assumption~\ref{ass:identif_f}.  Let $\epsilon_T = o(1/\sqrt{\kappa_T})$  be a positive sequence verifying $\log^3 T=O(T \epsilon_T^2)$ and $\Pi$ be a prior distribution on $\mathcal{F}$. 
\begin{itemize}
\item \textbf{Case 1 (Standard ReLU):  $\phi_k(x) = (x)_+$, for all $k \in [K]$.} Under the Assumptions \textbf{(A0)},  \textbf{(A1)} and  \textbf{(A2)} of Theorem~\ref{thm:conc_g}, if $f_0 $ verifies the following additional assumption 
\begin{equation} \label{condA3}
    \lim \sup_{T \to \infty}\frac{1}{T} \mathbb{E}_0\left(\int_0^T \frac{\mathds{1}_{\lambda^k_t(f_0) > 0}}{\lambda^k_t(f_0)}dt \right) < + \infty, \quad   k \in [K],
\end{equation}
then for $M>0 $ large enough,  \eqref{conc:L1:1} holds. 
\item \textbf{Case 2 (Shifted ReLU with $\theta_0$ unknown): $\phi_k(x; \theta_k^0) = \theta_k^0 + (x)_+, \: \theta_k^0>0,$ for all $ k \in [K]$.}  Let $\Pi_\theta$ be a prior distribution on $\Theta = \{\theta = (\theta_k)_k; \: \theta_k > 0\}$. 
If the Assumptions \textbf{(A0)},  \textbf{(A1)} and  \textbf{(A2)} of Theorem~\ref{thm:conc_g} are satisfied when replacing $B_\infty(\epsilon_T)$ by  $B_\infty(\epsilon_T) \cap B^\Theta_\infty(\epsilon_T) $ for $T$ large enough, and if \eqref{ass:identif_theta} is verified, then for  $M>0 $ large enough, 
\begin{align*}
&\mathbb{E}_0 \left[\Pi\big(\norm{ f -  f_0}_1 + \norm{\theta - \theta_0 }_1  > M \sqrt{\kappa_T'} \epsilon_T \big| N \big) \right]  = o(1).
\end{align*} 
\end{itemize}
\end{proposition}

\begin{remark}
In Case 1 of  Proposition~\ref{prop:relu} only,  $B_\infty(\epsilon_T)$ cannot be replaced by $B_2(\epsilon_T)$ in assumption \textbf{(A0)}. This is due to the fact that we need to consider parameters $f$ such that the likelihood at $f$ is positive (i.e., $\exp(L_T(f))>0$)  in order to control the Kullback-Leibler divergence (see Lemmas \ref{lem:KLdecomp}, \ref{lem:t_j} and \ref{lem:KL}).
In this argument, we also need the additional assumption \eqref{condA3}.
The latter is a non trivial condition on the intensity of the true model, which we do not expect to hold in many situations. For instance it does not hold if $\tilde \lambda_t(f_0)$ is Lipschitz in a neighbourhood of $t$ such that $\tilde \lambda_t(f_0)=0$. We expect that this can happen with significant probability as soon as one interaction functions $h_{lk}^0$ is Lipschitz and $h_{lk}^{0-}$ is non-null. It is however not clear if this condition is sharp, i.e., if Bayesian or other likelihood-based methods would be suboptimal without this assumption (from our construction of tests, it is easy to construct frequentist estimates of $f $ which converge at the rate \ds{$\sqrt{\epsilon_T}$} defined by the testing condition in \cite{ghosal:vdv:07}). \ds{This also motivates the study of the shifted ReLU  model, as an alternative of interest for modelling positive intensity functions}.
Nonetheless, in Lemma \ref{lem:histo}, we provide sufficient conditions in a finite-histogram model so that \eqref{condA3} holds. Finally, we note that using Theorem 1.2 of \cite{costa18} and notation $\tau_1, \tau_2$ for the regeneration times defined in Lemma \ref{lem:excursions}, \eqref{condA3} is equivalent to
$
    \mathbb{E}_0\left(\int_{\tau_1}^{\tau_2} \frac{\mathds{1}_{\lambda^k_t(f_0) > 0}}{\lambda^k_t(f_0)}dt \right) < + \infty.
$
\end{remark}

\begin{remark}\label{rem:unknown_link}
In Theorem \ref{thm:conc_g},  we in fact obtain the posterior concentration rate on $((\phi_k(\nu_k))_k,h)$, i.e.,
$$\mathbb{E}_0 \left[\Pi\left( \sum_k |\phi_k(\nu_k) - \phi_k(\nu_k^0) | + \norm{h - h_0}_1 > M \sqrt{\kappa_T} \epsilon_T\big| N \right) \right]  = o(1),$$ for $M$ a large enough constant. Moreover, if the $\phi_k$'s are partially known of the form $\phi_k(x;\theta_k) = \theta_k + \psi(x)$ where $\theta_k \geq 0$ and $\psi$ is given, then we obtain $$\mathbb{E}_0 \left[\Pi\big(\norm{ h -  h_0}_1 + \sum_k |\theta_k + \psi(\nu_k) - \theta_k^0 - \psi(\nu_k^0)  | > M \sqrt{\kappa_T} \epsilon_T \big| N \big) \right]  = o(1).$$
\end{remark}
\vspace{-5mm}
In the next corollary, we deduce from the previous results the convergence rate of the posterior means
\begin{align*}
&(\hat{\nu}, \hat{h}) = \mathbb{E}^\Pi[f|N] = \int_{\mathcal{F}} f d\Pi(f| N), \quad 
\text{and} \quad \hat{\theta} = \mathbb{E}^\Pi[(\theta)|N] \quad \text{when $\theta_0$ is unknown (in the shifted ReLU model)}.
\end{align*}
\begin{corollary}\label{cor:post_mean}
Under the assumptions of Theorem \ref{thm:conc_g} or Case 1 of Proposition \ref{prop:relu}, if $\int_{\mathcal{F}} \norm{f}_1 d\Pi(f) < + \infty$, 
then for $M > 0$  large enough, it holds that
\begin{align*}
&\mathbb{P}_0 \left[\|\hat{\nu} -  \nu_0\|_1 + \|\hat{h} -  h_0\|_1 > M \sqrt{\kappa_T} \epsilon_T \right]  = o(1).
\end{align*}
Under the assumptions of Case 2 of Proposition \ref{prop:relu}, we have
\begin{align*}
&\mathbb{P}_0 \left[\|\hat{\nu} -  \nu_0\|_1 + \|\hat{h} -  h_0\|_1 + \|\hat{\theta} -  \theta_0\|_1 > M\sqrt{\kappa_T} \epsilon_T \right]  = o(1).
\end{align*}

\end{corollary}

The proofs of Theorem \ref{thm:conc_g} and Proposition \ref{prop:relu} are given in Sections \ref{sec:proof_conc_g} and  \ref{sec:proof_conc_f}, and the proof of Corollary \ref{cor:post_mean} is reported in Section \ref{sec:proof_cor_post_mean}  in the supplementary material \cite{supplementary}.

\subsection{Consistency on the connectivity graph}\label{sec:post_consistency}

In this section, we state our consistency results on the connectivity or Granger causality graph $\delta \in \{0,1\}^{K^2}$, which characterises the fact that interaction functions between pairs of dimensions are null or not, i.e., $\delta_{lk} = 0 \iff h_{lk} = 0, \: (l,k) \in [K]^2$. We note that the definition of Granger causality graph for the linear Hawkes model (see for instance Definition 3.3 in \cite{eichler2016graphical}) also holds for the nonlinear model. 
This leads us to consider the following hierarchical spike-and-slab prior structure.  Writing $h_{lk} = \delta_{lk} h_{lk} = \delta_{lk}S_{lk} \bar h_{lk}$, with $S_{lk} = \norm{h_{lk}}_1$ and $\bar h_{lk}$ such that $\norm{\bar h_{lk}}_1 = 1$, we define a family of priors: 
\begin{align} \label{structureprior}
 \delta &\sim  \pi_\delta, \quad \mathcal I(\delta) = \{ (l,k) \in [K]^2 ; \, \delta_{lk}=1\}, \nonumber\\
 (   h_{lk}, \, (l,k) \in \mathcal I(\delta) )|\delta &\sim \Pi_{h|\delta}( \cdot | \delta) \quad \text{and} \quad \forall (l,k)\notin \mathcal I(\delta), \: h_{lk} =0,
 \end{align}
with $\pi_\delta$ a probability distribution on $\{0,1\}^{K^2}$. We can either determine $\Pi_{h|\delta}$ as a distribution on the set of $(   h_{lk}, \, (l,k) \in \mathcal I(\delta) )$ and obtain the marginal distribution of $S = (S_{lk})_{lk}$, or construct it as in \cite{donnet18} - see also the prior construction in Section \ref{sec:priors}. Adapting \textbf{(A0)} to the above structure, we recall that $\delta_0 $ corresponds to the true connectivity parameter and we consider the following assumption 
$$ \textbf{(A0')} \quad  \Pi(  B_\infty(\epsilon_T) | \delta= \delta_0)   \geq e^{ -c_1 T\epsilon_T^2/2} , \quad \pi_\delta ( \delta = \delta_0) \geq e^{ -c_1 T\epsilon_T^2/2}.$$

For instance, one can choose $\pi_\delta = \mathcal B(p)^{K^2}$ with $0 < p < 1$, implying that the $\delta_{lk} $'s are i.i.d. Bernoulli random variables. Then for any fixed $p$, \textbf{(A0')}  is verified as soon as $ \Pi_{h|\delta}( B_\infty(\epsilon_T)|\delta = \delta_0)  \geq e^{ -c_1 T\epsilon_T^2/2} $ holds. This formalism allows us to consider the posterior  distribution of $\delta$ which is a key object to infer the connectivity graph. The next theorem is our posterior consistency result, which is a consequence of Theorem \ref{thm:conc_g} and Proposition \ref{prop:relu} and holds for all previously considered link functions $\phi$.

\begin{theorem}\label{thm:post_graph}
Let $N$ be a Hawkes process with function $\phi = (\phi_k)_k$ and parameter $f_0 = (\nu_0, h_0)$,  $\epsilon_T = o(1/\sqrt{\kappa_T})$  be a positive sequence and $\Pi$ be a prior distribution on $\mathcal{F}$ satisfying the conditions of Theorem \ref{thm:conc_g} or Proposition \ref{prop:relu} (replacing \textbf{(A0)} by \textbf{(A0')}). Then, 
\begin{equation} \label{H0}
    \mathbb{E}_0 \left[\Pi(\delta_{lk} \neq \delta_{lk}^0, \, \forall (l,k) \in \mathcal I(\delta_0) |N) \right] = o(1), \quad I(\delta_0) = \{(l,k) \in [K]^2; \: \delta_{lk}^0 = 1\}.
\end{equation}
 If in addition the following holds
\begin{align}\label{eq:cond_prior_graph}
\forall \delta  \in \{0,1\}^{K^2}, \: \forall C>0,\,  \forall (l,k) \in  \mathcal I(\delta)\cap \mathcal I(\delta_0)^c , \: \Pi_{h|\delta} \left(S_{lk} \leq C \epsilon_T \left| \delta \right.\right) = o\left(e^{- (\kappa_T + c_1) T \epsilon_T^2}\right),
\end{align}
with $c_1 > 0$ defined in \textbf{(A0')}, 
then $\mathbb{E}_0 \left[\Pi(\delta \neq \delta_0 |N) \right] = o(1).$
\end{theorem}

The first part of Theorem \ref{thm:post_graph} in \eqref{H0} is directly obtained from Theorem \ref{thm:conc_g} or Proposition \ref{prop:relu} (Cases 1 and 2) and says that the posterior probability of $\delta_{lk} =1$ converges to 1, if the edge $l \to k$ is in $I(\delta_0)$, i.e., $\delta_{lk}^0=1$. The second and more difficult part of Theorem \ref{thm:post_graph} is to infer a non-edge $\delta_{lk}^0=0$. The condition \eqref{eq:cond_prior_graph} forces the conditional prior distribution $\Pi_{h|\delta}$ to be exponentially small around 0 for all $h_{lk}$ such that $\delta_{lk}=1$.  We note that it also implies that if $h^0_{lk}\neq 0$ and is small, then it may not be detected nor estimated properly. In Section \ref{sec:priors}, we present two common families of  priors on the $S_{lk}$'s that verify \eqref{eq:cond_prior_graph}.


Interestingly, if the model is more constrained, a much  weaker condition on the prior distribution on $S_{lk}$ is required which avoids this issue on the estimation of small ``signals" $h_{lk}^0$. We now consider two restricted Hawkes models, where the interaction functions are either all the same, or only depend on the ``receiver" node. For simplicity of exposition, we consider the case of fully known link functions satisfying the assumptions of Theorem \ref{thm:conc_g}, however our next proposition remains valid for the ReLU and shifted ReLU models under the assumptions of Proposition \ref{prop:relu}.
\begin{itemize}
    \item \textbf{All-equal model:}  we assume that $\forall (l,k) \in [K]^2, \: h_{lk} = \delta_{lk}  \Tilde h$, with $\Tilde h \in \mathcal{H}'$
so that $\mathcal{F}= \{f=(\nu,  \delta, \Tilde h) \in \R_+\backslash\{0\}^{K} \times \{0,1\}^{K^2} \times \mathcal{H}'; \: (f, \phi) \: \text{satisfy \textbf{(C1bis)} or \textbf{(C2)} and Assumption \ref{ass-psi}} \}$. 
When $\delta \neq 0$, then $\tilde h \sim \Pi_{\tilde h}$ is a probability distribution on $\mathcal H' \cap \{ \Tilde h\neq 0\}$.
\item \textbf{Receiver node dependent model:} we assume that $\forall (l,k) \in [K]^2, h_{lk} = \delta_{lk} h_k$ with $h_k \in \mathcal{H}'$, so that $\mathcal{F}= \{f=(\nu, \delta, (h_k)_{k}); \: h_k \in \mathcal{H}', \: \forall k, \: (f, \phi) \: \text{ satisfy \textbf{(C1bis)} or \textbf{(C2)} and Assumption \ref{ass-psi}} \}$. We also assume that the prior distribution $\Pi$ can be written as a product of priors $(\Pi_k)_{k}$ where for each $k$, $\Pi_k$ is a distribution on $(\nu_k, h_k, \delta_{lk}, \,  l \in [K])$, restricted to $ \mathcal F$. We denote $\delta_{\cdot k} = (\delta_{lk}, \: l \in [K])$.

\end{itemize}


\begin{proposition}\label{prop:graph_restr}
We consider a restricted Hawkes model either defined above as the \textbf{All-equal model} or as the \textbf{Receiver node dependent model}. Let $N$ be a Hawkes process with function $\phi = (\phi_k)_k$ and parameter $f_0 = (\nu_0, h_0)$ and 
let $\Pi$ be a prior distribution on $\mathcal{F}$ such that the prior on $\nu$ has positive and continuous density wrt the Lebesgue measure. We also assume that there exists $0<p_1<1/2$ such that for any $(l,k) \in [K]^2$,  $p_1\leq \Pi(\delta_{lk}=1)\leq 1-p_1$.

\begin{itemize} 
\item In the \textbf{All-equal model}:

\begin{enumerate}
\item  If there exists $(l,k) \in [K]^2$ such that 
$\delta_{lk}^0\neq 0$, then if $\Pi_{\tilde h}( h_0\leq \tilde h \leq h_0+ \epsilon_T ) \geq e^{-c_1 T \epsilon_T^2/2} $  and if \textbf{(A1)}, \textbf{(A2)} hold, then
$\Exz{\Pi ( \delta \neq \delta_0 | N)} = o(1).$
\item If $\delta_0=0$, then if there exists  $\mathcal H_T \subset \mathcal H$ such that for all $\delta\neq 0$, $\Pi_{h|\delta}( \mathcal H_T^c|\delta)  = o ( T^{-K/2} ) $, if \textbf{(A2)} holds with $\epsilon_T = \sqrt{\log T/T}$, and if
\begin{align}\label{ass:a4_1}
\forall C > 0, \: \Pi_{\tilde h} \left(0 < \|\tilde h\|_1 \leq C  \sqrt{ \log T /T}  \right) =o((\log T)^{-K/2}),
\end{align}
then $\Exz{ \Pi ( \delta \neq 0 | N)} = o(1).$
\end{enumerate}

\item In the \textbf{Receiver node dependent model}: under \textbf{(A0')}, \textbf{(A1)}, \textbf{(A2)}, for any $k \in [K]$, 

\begin{enumerate}
\item  If  there exists $l \in [K]$ such that  $\delta_{lk}^0\neq 0$, then 
$\Exz{\Pi ( \delta_{k_1k}  \neq \delta^0_{k_1k}  | N)} = o(1), \: \forall  k_1\in [K].$
\item If  $\delta_{\cdot k}^0=0$, if  there exists  $\tilde{\mathcal H_T} \subset \mathcal H_1$ such that $\Pi_k( \tilde{\mathcal H_T}^c)  = o( T^{-K/2} ) $, and if for $M>0$ large enough and $x_0 > 0, \: \zeta_0 > 0$,
$$\mathcal{N}\left(\zeta_0 M \sqrt{\log T/T},\tilde{\mathcal H_T}, ||.||_1\right) \leq  T^{x_0M}, $$
and if \eqref{ass:a4_1} holds with $h_k$ instead of $\Tilde{h}$,
then 
$\Exz{ \Pi (\delta_{\cdot k} \neq \delta_{\cdot k}^0 | N)} = o(1).$
\end{enumerate}
\end{itemize}

\end{proposition}

Consequently,  in those restricted Hawkes models, the above proposition states that the posterior distribution is consistent at $\delta_0$ under the much weaker assumption\eqref{ass:a4_1}  on the prior  compared to \eqref{eq:cond_prior_graph} of Theorem \ref{thm:post_graph}. In fact, in the \textbf{All-equal model} (resp. the \textbf{Receiver node dependent model}), if the true graph has no edge (resp. no edge arriving on node $k$), then the posterior distribution on $h$ (resp. $h_k$) concentrates at the paranetric rate $\sqrt{\log T/T}$. This gives a sharp lower bound on the marginal density of $N$, i.e., on the denominator $D_T$ in \eqref{def:pposterior_dist}. We note that  \eqref{ass:a4_1} is a mild condition which is verified in particular when the prior distribution on $\Tilde{S} = \|\Tilde{h}\|_1$ (resp. $S_k= \|h_k\|_1$) conditionally on $\Tilde{S} \neq 0$ (resp. $S_k\neq 0$) has a  density wrt the Lebesgue measure bounded by $\Tilde{S}^{-a}$  (resp. $S_k^{-a}$) with $a > 0$ near 0.

We now study the consistency of Bayesian estimators of the connectivity graph. From Theorem \ref{thm:post_graph} or Proposition~\ref{prop:graph_restr}, we can directly obtain that the graph estimator based on the $0-1$ loss function defined as $\hat{\delta}^{\Pi}_{lk}(N) = 1 \iff \Pi(\delta_{lk}=1 |N) > \Pi(\delta_{lk}=0 |N)$,
is consistent, i.e., $\mathbb{P}_0 \left[\hat{\delta}^{\Pi}(N)  \neq \delta_0 \right] = o(1)$.  
 This result is obtained with the prior condition \eqref{eq:cond_prior_graph} in the non-restricted model, which as previously explained can deteriorate the inference of small and non-null interaction functions. We thus propose an alternative graph estimator based on a loss function penalising small signals, which therefore allows us  to use  prior distributions which do not verify \eqref{eq:cond_prior_graph}. For any graph estimator $\hat{\delta}  = (\hat{\delta}_{lk})_{l,k} \ \in \{0,1\}^{K^2}$ and parameter $f = (\nu, h, \delta) \in \mathcal{F}$, we define
\begin{equation*}
    L(\hat{\delta}, f) = \sum_{l,k = 1}^K \mathds{1}_{\hat{\delta}_{lk}=0} \mathds{1}_{\delta_{lk}=1} + \mathds{1}_{\hat \delta_{lk}=1} \big( \mathds{1}_{\delta_{lk}=0} + \mathds{1}_{\delta_{lk} = 1} F(\norm{h_{lk}}_1)\big),
\end{equation*}
with $F: \R^+ \to [0,1]$ a monotone  non-increasing function, with $F(0)=1$. For a prior $\Pi$, the risk of the estimator $\hat{\delta}$ is defined as
\begin{align*}
        r(\hat{\delta},\Pi|N) &= \int_{\mathcal{F}} L(\hat{\delta},f) d\Pi(f|N) = \sum_{l,k} \mathds{1}_{\hat{\delta}_{lk}=0}  \Pi(\delta_{lk}=1|N) + \mathds{1}_{\hat{\delta}_{lk}=1} \left[\Pi(\delta_{lk}=0|N) + \mathbb{E}^\Pi(\mathds{1}_{\delta_{lk}=1} F(\norm{h_{lk}}_1)|N)\right].
\end{align*}
Then the associated risk-minimising estimator, $ \hat{\delta}^{\Pi,L} (N) = \arg\min \limits_{\delta \in \{0,1\}^{K^2}} r(\delta,\Pi|N)  $, verifies
\begin{align}\label{eq:def_estim}
    \hat \delta_{lk}^{\Pi,L} (N) = 1 & 
    \iff \mathbb{E}^\Pi[(1 - F(\norm{h_{lk}}_1)) \mathds{1}_{\delta_{lk}=1} |N] \geq \Pi(\delta_{lk}=0|N).
\end{align}
In the next theorem, we prove that our estimator $\hat{\delta}^{\Pi,L}(N)$ is consistent under the true model $\mathbb{P}_0$ if the penalisation function $F$ satisfies an exponential condition.

\begin{theorem}\label{cor:graph_estim}
Let $N$ be a Hawkes process with function $\phi = (\phi_k)_k$ and parameter $f_0 = (\nu_0, h_0)$,  $\epsilon_T = o(1/\sqrt{\kappa_T})$ be a positive sequence and $\Pi$ be a prior distribution on $\mathcal{F}$ satisfying the conditions of Theorem \ref{thm:conc_g} or Proposition \ref{prop:relu} (replacing \textbf{(A0)} by \textbf{(A0')}). Then, if there exists $a>0$ such that
\begin{align}\label{eq:cond_penalisation}
 0 \leq 1 - F( M \sqrt{\kappa_T} \epsilon_T) \leq e^{-(c_1+a+\kappa_T) T\epsilon_T^2},
\end{align}
for $T$ large enough and with $M > 0$  defined in Theorem \ref{thm:conc_g},  then  for any $(l,k) \in \mathcal I(\delta_0)$ such that
$ 1 - F(\norm{h_{lk}^0}_1)  \geq 2e^{- (\kappa_T +c_1) T\epsilon_T^2}$,  we have
 $\mathbb{P}_0 \left[\hat{\delta}^{\Pi,L} (N) \neq \delta_0 \right] = o(1).$
\end{theorem}

\begin{remark}
The assumption on the penalisation function \eqref{eq:cond_penalisation} is verified in particular if (i) $F$ is truncated, i.e., $F(x) = \mathds{1}_{[0,\epsilon]}(x)$ for some (arbitrarily small) $\epsilon > 0$, or if (ii) $F$ is exponentially decreasing around 0, i.e., $F(x) = 1 - \exp \{ - \frac{1}{x^p}\}$ with $p >1/\beta$ if  $\epsilon_T = T^{-\beta/2\beta+1}(\log T)^q $ for some $q\geq 0$ (see Corollary \ref{cor:histograms} for instance). We note that the choice of penalisation function $F$ determines the detection level of our risk-minimising graph estimator for ``small signals". With (i), we will detect ``signals" $\|h_{lk}^0\|_1 > \epsilon$ and with (ii), we can detect $\|h_{lk}^0\|_1 > T^{-(p(2\beta+1))^{-1}}$.  We also note that this assumption is related to \eqref{eq:cond_prior_graph}, however, since it applies on the penalisation function $F$ and not on the prior distribution, it does not alter the posterior distribution, thus the estimation of $\nu_0$ and $h_0$. 
\end{remark}

The proofs of Theorem \ref{thm:post_graph}, Proposition \ref{prop:graph_restr} and Theorem \ref{cor:graph_estim} can be found respectively in Section \ref{sec:proof_post_graph}, Section  \ref{sec:proof_prop_restr} in the supplementary material \cite{supplementary} and Section \ref{sec:proof_cor_graph}.




\section{Prior models}\label{sec:priors}

In this section, we construct prior distributions $\Pi$ that satisfy the assumptions of our main results stated in Section \ref{sec:main_results} and obtain explicit posterior concentration rates for H\"{o}lder-smooth classes of interaction functions. 
For ease of exposition, we consider link functions $\phi_k$'s injective on $(m_k,M_k)$, with $m_k, M_k \in \R \cup \{-\infty, +\infty\}$.

First, we consider a prior on $\nu = (\nu_k)_k$
of the form:
$\nu_k  \stackrel{i.i.d.}{\sim} \pi_\nu(\nu_k | (h_{lk})_{l \in [K]}) )\propto \pi_\nu(\nu_k) \mathds{1}_{(m_k,M_k)}(\nu_k)$ with 
$\pi_\nu $ a positive and continuous probability density on $(0, +\infty)$. \ds{To verify \textbf{(A1)}, we can for instance choose $\pi_\nu$ such that $\pi_\nu(\nu_k > x) \leq x^{-a}$ with $a > 1$. Then it is enough to choose $c_2$ such that $c_2 > (\kappa_T + c_1)/a$.}
Moreover in Case 2 of Proposition \ref{prop:relu} (i.e., shifted ReLU with unknown shift $\theta_0$), we consider a prior on $\theta$ such that $\theta_k \stackrel{i.i.d.}{\sim} \pi_\theta$ with $\pi_\theta$  a density wrt the Lebesgue measure on $(0,+\infty)$.

For the prior on $h$, we consider the hierarchical structure \eqref{structureprior} introduced in Section \ref{sec:post_consistency} and for the sake of simplicity we assume that 
$ \delta_{lk} \stackrel{i.i.d.}{\sim} \mathcal{B}(p), \: \forall (l,k) \in [K]^2, \: p \in (0,1), $
although as previously mentioned, more  general priors on $\delta$ could be considered. We recall that $I(\delta) = \{(l,k) \in [K]^2; \: \delta_{lk} =1 \}$. We then consider two parametrisation setups. In the first one, $h =(h_{lk}, \: (l,k) \in I(\delta) )$ is drawn from a truncated distribution of the form 
\begin{equation}\label{parametrisation1}
    d\Pi_h( h | \delta)  \propto d\Pi_h^{\otimes |I(\delta)|}(h) \mathds{1}_{ \norm{S^+}<1}(h), 
\end{equation} 
or simply $d\Pi_h( h | \delta)  \propto d\Pi_h^{\otimes |I(\delta)|}(h)$ in the case of a bounded link function (condition \textbf{(C2)}), where $\Pi_h$ is a prior distribution on one function. In the second parametrisation setup,
\begin{equation}\label{parametrisation2}
h_{lk} = S_{lk} \bar{h}_{lk}, \quad \norm{\bar{h}_{lk}}_1 = 1, \quad   [\bar{h}_{lk} | (l,k) \in I(\delta)] \stackrel{i.i.d.}{\sim} \Pi_{\bar h}, \quad   S|\delta \sim \Pi_{S|\delta},
\end{equation}
with $\Pi_{\bar h}$ is a prior distribution on one $L_1$-normalised function and $\Pi_{S|\delta}$ is a prior distribution on matrices with non-zero entries $\delta$ and, under \textbf{(C1bis)}, 
satisfying $\norm{S^+}<1$.

Examples of the parametrisation setup \eqref{parametrisation1}  are Gaussian processes (or hierarchical Gaussian processes) priors, and prior distributions based on an expansion on some basis, such as Legendre, Fourier, wavelets,  splines, etc. As mentioned earlier, the prior assumptions \textbf{(A0)-(A2)} are very common in the literature, which allows to directly apply existing results, as we illustrate on  spline priors in Section \ref{sec:spline}. In \cite{donnet18}, a similar construction is provided using a mixture of Betas distributions in the linear Hawkes model, which leads to the minimax rate of assumption up to a logarithmic factor. We report this construction in the nonlinear model in Section \ref{sec:mixture} in the supplementary material \cite{supplementary} and obtain the same estimation rate up to logarithmic terms. The difficulty in this parametrisation might be to prove condition \eqref{eq:cond_prior_graph} in Theorem \ref{thm:post_graph} for estimating the connectivity graph. 
In Section \ref{sec:random:histo}, we illustrate the second parametrisation setup \eqref{parametrisation2} with random histogram priors, which is a setup where condition \eqref{eq:cond_prior_graph} can be more easily verified. We also consider a prior based on mixtures of Beta distributions in the supplementary material \cite{supplementary}. We denote $\mathcal{H}(\beta, L_0)$ the class of $\beta$-smooth functions with radius $L_0$.

\subsection{Spline priors for $\Pi_h$}\label{sec:spline}

A nonparametric prior $\Pi_{h}$ satisfying the assumptions of Theorem \ref{thm:conc_g} can be constructed using  the family of splines or free knot splines. Without loss of generality, we assume that $A = 1$. For $J \geq 1$, let $t_0 = 0 < t_1 < \dots < t_J = 1$ define a partition of $[0,1]$ and $I_j = (t_{j-1}, t_j)$, $j\in [J]$. We consider splines of order $q \geq 0$, i.e., piecewise polynomial functions (on the partition) of degree $q$ and for $q\geq 2$, $q-2$ times continuously differentiable. For a given partition, this defines a vector space of dimension $V = q+J-1$ (see for instance \cite{stone, ghosal00}).

For the sake of simplicity, we present the construction of regular partitions, where $t_j  = j/J$, however random partitions can be dealt with following the computations of Section 2.3.1 of \cite{donnet18}. Let $B = (B_1, \cdots, B_V)$ be the $B$-spline basis of order $q$, as defined in \cite{ghosal00}. Recall that for any $j \in [V]$, $B_j$ has support included in an interval of length $q/J$, $B_j\geq 0$ and $\sum_j B_j(x)=1$ for all $x \in [0,1]$.  We can then define
\begin{align*}
    h_{w,J}(x) = w^T B(x), \quad w \in \mathbb R^V,  \quad J \sim \mathcal P(\lambda), 
\end{align*}
where $\mathcal P(\lambda)$ is the Poisson distribution with mean $\lambda$, and consider the following hierarchical construction of $\Pi_{h}$
\begin{align}\label{eq:prior_J}
    w_j \stackrel{i.i.d.}{\sim} \pi_w, \quad 1 \leq j \leq V = q + J-1,
\end{align}
with $\pi_w$ a positive and continuous density on $\mathbb R$ satisfying $ \pi_w(x) \lesssim e^{-a_1 |x|^{a_2}} $ for some $a_1,a_2, \lambda > 0$.

Using Lemma 4.1 of \cite{ghosal00}, if $h_0$ is $\mathcal H(\beta, L_0)$ for some $\beta \leq q$ and $L_0>0$, then setting $J_T =  J_0(T/\log T)^{1/(2\beta+1)}$, $\epsilon_T = (T/\log T)^{-\beta/(2\beta+1)}$, there exist $w_0\in \mathbb R^{V_T}$, $V_T = q+J_T-1$ and $C>0$ such that $\|h_0 -h_{w_0,J_T} \|_\infty \leq C \epsilon_T $. Moreover using Lemma 4.2 and Lemma 4.3 of \cite{ghosal00}, we have  $\|w_0\|_\infty \leq C_0$, for some $C_0$,  and obtain that 
$ \{w\in \mathbb R^{V_T} , \: \|w-w_0\|_\infty \leq \epsilon_T\} \subset  B_\infty(\epsilon_T)$, which leads to \textbf{(A0)}. Similarly, from Lemma 4.2 of \cite{ghosal00},  
$\|h_{w,J} - h_{w',J}\|_1 \lesssim \|w-w'\|_\infty $
and with
$\mathcal H_T = \{ h_{w,J}; \: \|w\|_\infty \leq T^{B_0}, \: J\leq J_1 J_T\}$ for some $B_0>0 $ and $J_1>0$, \textbf{(A1)} and \textbf{(A2)} are also verified. We finally obtain the following result.

\begin{corollary}{}\label{cor:spline}
Let $N$ be a Hawkes process with link functions $\phi = (\phi_k)_k$ and parameter $f_0 = (\nu_0, h_0)$ such that $(\phi, f_0)$ verify the conditions of Lemma~\ref{lem:existence}, and Assumption \ref{ass-psi}. Under the above spline  prior, 
if for any $(l,k) \in [K]^2, h_{lk}^0 \in \mathcal{H}(\beta,L_0)$ with $\beta \in (0, q+1]$ and $L_0 > 0$, then for $M > 0$ large enough, we have
\begin{align*}
    \mathbb{E}_0 \left[\Pi\big(\|f - f_0 \|_1 > M  \left(T/\log T\right)^{-\beta/(2\beta+1)}(\log T)^{q_0} \big| N \big) \right]  = o(1),
\end{align*}
where $q_0=0 $ if $\phi$ verifies Assumption \ref{ass-psi}(i) and $q_0=1/2$ if $\phi$ verifies Assumption \ref{ass-psi}(ii).
\end{corollary}

To estimate the connectivity graph $\delta_0$, one can either use the penalised estimator \eqref{eq:def_estim}, which from the above computations and Corollary \ref{cor:graph_estim} is consistent, or use the estimator based on the 0-1 loss function if \eqref{eq:cond_prior_graph} can be verified. 
In the next section, we consider a prior based on random histograms and illustrate how the latter condition \eqref{eq:cond_prior_graph} can be satisfied.

\subsection{Random histograms prior} \label{sec:random:histo}

Random histograms are a special case of splines with $q=0$. These piecewise constant functions are of particular interest in the modelling of spike trains emitted by biological neurons, which only interact  on certain time periods. We use a similar construction as in Section 2.3.1. of \cite{donnet18}, however here the interaction functions are no longer restricted to be non-negative. Using parametrisation \eqref{parametrisation2}, the interaction function $h_{lk}$ for $(l,k) \in I(\delta)$ has the form  $h_{lk} = S_{lk} \bar h_{lk}$ and the $\bar h_{lk} $'s are independent and distributed as a random histogram  $\bar h_{w,\mathbf t}$ defined as follows. Given a partition $\mathbf t:$ $0=t_0< t_1 <\cdots< t_J=1$, we define 
$$ \bar h_{w,\mathbf t}(x) = \sum_{j=0}^{J-1} \frac{ w_j }{ t_{j+1}-t_j}\mathds{1}_{(t_{j-1}, t_j]}, \quad \sum_{j = 0}^{J-1} |w_j| =1, \quad J \sim \mathcal P(\lambda), \: \lambda > 0 .$$
Similarly to \cite{donnet18},  the prior on $(|w_1|, \cdots, |w_J|)$ is constructed by first selecting the non-zero coefficients $w_j$'s, then defining a Dirichlet prior on the vector of non-zero $|w_j|$'s, and finally sampling the sign of the $w_j$'s. Hence, 
$$\forall  j \in [J], \: w_j = Z_j u_j, \quad Z_j \in \{ -1, 0 , 1\}, \quad u_j \geq 0, \quad \sum_{j=1}^J u_j=1,$$
and $u_j=0$ if $Z_j=0$. We can consider 
$Z_j\stackrel{i.i.d.}{\sim}\text{Multinomial}(p_{-1}, p_0, p_1)$, with $p_{-1} + p_0 + p_1 = 1$, and given $(Z_1, \cdots, Z_J)$,
$ (u_{i_1}, \cdots, u_{i_{s_z}}) \sim \mathcal D( a_{s_z}, \cdots, a_{s_z}), \quad s_z= \sum_j |Z_j|,$
where $i_1, \cdots , i_{s_z}$ are the indices of the non zero $Z_j$'s and $\alpha_{-1}, \alpha_{0}, \alpha_{1}, a_{s_z} > 0$.
Finally if the partition $\mathbf{t}$ is random, we consider a Dirichlet prior $\mathcal D(\alpha, \cdots, \alpha)$ on $(t_1, t_2-t_1, \cdots, 1-t_{J-1})$. We note that this construction is very similar to Section 2.3.1 of \cite{donnet18}, and we therefore obtain the same results as in Corollaries 2 and 3 of \cite{donnet18}.

 Besides, to estimate the connectivity graph using the 0-1 loss (and to establish our posterior consistency result), we can now verify \eqref{eq:cond_prior_graph}. This condition  holds if, with $d\Pi_{S|\delta} = \prod_{(l,k) \in I(\delta)} d\Pi_S(S_{lk}) \mathds{1}_{\|S^+\|<1}$ (under \textbf{(C1bis)}),  $\Pi_S$ has a positive and continuous density $\pi_S$ on either $[\epsilon, 1]$ if $S_{lk}^0 >\epsilon$, or if the density near 0 verifies 
 \begin{equation*}
    \pi_S(s^p) \propto s^{-p(\alpha-1)} \exp (-a/s^p) \mathds{1}_{[0,1]}(s),\quad p > \beta, \quad a>0.
\end{equation*}

We now present a corollary of Theorem \ref{thm:conc_g} in the case of random histograms with random partitions, which is proved as in \cite{donnet18}. 
\begin{corollary}{}\label{cor:histograms}
Let $N$ be a Hawkes process with link functions $\phi = (\phi_k)_k$ and parameter $f_0 = (\nu_0, h_0)$ such that $(\phi, f_0)$ verify  Assumption \ref{ass-psi}. Under the above random histogram prior, 
if  for any $ (l,k) \in [K]^2, h_{lk}^0 \in \mathcal{H}(\beta,L_0)$ with $\beta \in (0,1]$ and $L_0 > 0$, then for $M$ large enough, we have
\begin{align*}
    \mathbb{E}_0 \left[\Pi\big(\|f - f_0 \|_1 > M  (T/\log T)^{-\beta/(2\beta+1)}(\log T)^q \big| N \big) \right]  = o(1),
\end{align*}
where $q= 0$ if $\phi$ verifies Assumption \ref{ass-psi}(i), and $q=1/2$ if $\phi$ verifies Assumption \ref{ass-psi}(ii).
\end{corollary}

Finally, in the case of the ReLU model (Proposition \ref{prop:relu}), we can also verify \eqref{condA3}, in special case of the true parameter $f_0 = (\nu_0, h_0)$ where each $h_{lk}^0$ lie in the space of finite histograms.

\begin{lemma}\label{lem:histo}
Let $N$ be a nonlinear Hawkes process with  parameter $f_0 = (\nu_0, h_0)$ and ReLU link functions $\phi_k(x) = (x)_+, \forall k$, satisfying  Assumption~\ref{ass:identif_f} (and condition \textbf{(C1bis)}). If for all $(l,k) \in [K]^2$, there exists $J_0 \in \mathbb{N}^*$ such that
$
    h_{lk}^{0}(t) = \sum_{j=1}^{J_0} \omega^{lk}_{j0} \mathds{1}_{I_j}(t),
$
with $\{I_j\}_{j=1}^{J_0}$ a partition of $[0,1]$ and $\forall j \in [J_0], \: \omega_{j0}^{lk} \in \mathbb Q $, then \eqref{condA3} holds.
\end{lemma}

\begin{remark}
In the previous lemma, the condition that the weights $w_{j0}^{lk}, (l, k) \in [K]^2, j \in [J]$ are rational numbers  is a technical argument that allows to find a lower bound 
 on $\Tilde{\lambda}_t^k(f_0)$ when  $\lambda_t^k(f_0) > 0$. 
 This results from a density argument of the linear combinations of the weights, which, under these conditions, constrains $\lambda_t^k(f_0)$ to take values on a lattice. Besides, we note that our result is in fact more general and applies to any model with Lipschitz link functions such that $\min_{x \in \R} \phi_k(x) = 0$.
\end{remark}

Lemma \ref{lem:histo} is proved in Section \ref{sec:proof_lem_histo}  in the supplementary material \cite{supplementary}.

%
%
%
%




\section{Proofs}\label{sec:proof}
In this section, we report the proofs of our main theorems on the posterior concentration properties (Theorems \ref{thm:conc_g} and Proposition \ref{prop:relu}), and on the estimation of the  connectivity graph (Theorems \ref{thm:post_graph} and \ref{cor:graph_estim}). Instead of using the clustering structure of linear Hawkes processes like in \cite{donnet18} or a coupling technique like in \cite{chen17b}, these proofs leverage the renewal properties of nonlinear Hawkes processes notably studied by Costa et al. in \cite{costa18}. The novelty of our proofs lies in the selection of parts or special ``excursions", that allow us to estimate the parameter at a rate equivalent to the one for a linear Hawkes process.  In the following section, we first recall the definitions of the concept of excursions and some properties of the process' renewal times.



\subsection{Renewal times and excursions}\label{sec:lem_excursions}

In the following lemma, we introduce the concept of \emph{excursions} for stationary nonlinear Hawkes processes verifying the conditions of Lemma~\ref{lem:existence}. This result extends the ones of Costa et al. in \cite{costa18} to the multivariate case under condition \textbf{(C1bis)} of Lemma ~\ref{lem:existence} and to bounded models (condition \textbf{(C2)}).

\begin{lemma}\label{lem:excursions}
Let $N$ be a Hawkes process with monotone non-decreasing and Lipschitz link functions $\phi = (\phi_k)_k$ and parameter $f = (\nu, h)$ such that $(\phi, f)$ verify \textbf{(C1bis)} or \textbf{(C2)}.
Then the point process measure $X_t(.)$ defined as
\begin{equation}\label{eq:pp_measure_x}
    X_t(.) = N|_{(t-A,t]},
\end{equation}
is a strong Markov process with positive recurrent state $\emptyset$. Let $\{\tau_j\}_{j\geq 0}$ be the sequence of random times defined as
\begin{align*}
    \tau_j = \begin{cases}
    0 & \text{ if } j=0; \\ 
    \inf \left \{t > \tau_{j-1}; \: X_{t^-} \neq \emptyset, \: X_{t} = \emptyset \right \}  = \inf \left \{t > \tau_{j-1}; \: N|_{[t-A,t)} \neq \emptyset, \: N|_{(t-A,t]} = \emptyset \right \} & \text{ if } j\geq 1 .
    \end{cases}
\end{align*}
Then, $\{\tau_j\}_{j\geq 0}$ are stopping times for the process $N$. For $T > 0$, we also define 
\begin{equation}\label{def:J_T}
    J_T=\max\{j\geq 0;\: \tau_j \leq T\}.
\end{equation}
The intervals $\{[\tau_j, \tau_{j+1})\}_{j=0}^{J_{T}-1} \cup [\tau_{J_T}, T]$ form a partition of $[0,T]$. The point process measures $(N|_{[\tau_j, \tau_{j+1})})_{1 \leq j \leq J_T - 1}$ are i.i.d. and independent of $N|_{[0, \tau_1)}$ and $N|_{[\tau_{J_T},T]}$; they are called \emph{excursions} and the stopping times $\{\tau_j\}_{j\geq 1}$ are called \emph{regenerative} or \emph{renewal} times. 
\end{lemma}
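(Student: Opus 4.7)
The plan is to extend the univariate regeneration arguments of Costa (Propositions 3.1, 3.4 and Theorems 3.5, 3.6 of \cite{costa18}) to the multivariate setting, exploiting the finite-memory hypothesis $\mathrm{supp}(h_{lk}) \subset [0,A]$ and the subcriticality condition $\|S_0^+\|_1 < 1$ that the excerpt already assumes for the true parameter $f_0$.

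First, I would verify the strong Markov property of $X_t = N|_{(t-A,t]}$. Because every interaction kernel $h_{lk}$ vanishes outside $[0,A]$, the conditional intensity $\lambda_t^k(f_0)$ depends on the past of $N$ only through the restriction $N|_{(t-A,t]}$. Consequently $(X_t)_t$ is a c\`adl\`ag Markov process on the Polish space of point measures on $[0,A]$, and the strong Markov property follows from its Feller-type regularity exactly as in Proposition 3.1 of \cite{costa18}.

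Second, I would establish that $\emptyset$ is a positive recurrent state through a coupling argument. The Lipschitz and monotonicity assumptions on $\psi$ imply that $N$ is stochastically dominated componentwise by the linear multivariate Hawkes process $N^+$ driven, on the same underlying Poisson measure (via Proposition \ref{prop:costa_21} of the appendix), by the kernels $L h_{lk}^+$ and baselines $\nu_k + \theta_k$. The branching matrix of $N^+$ is $S_0^+$ with $\|S_0^+\|_1 < 1$, so $N^+$ is a subcritical cluster process with a finite-mean stationary distribution. The random times at which $N^+$ has no point inside a window of length $A$ are therefore a.s.\ infinite and have finite expected spacing; the domination transfers this to $N$, giving $\Exz{\tau_1} < \infty$ and recurrence of $\emptyset$. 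The exponential-moment bound that Lemma \ref{lem:tau} later invokes quantifies this step.

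Third, each $\tau_j$ is a stopping time for $(\mathcal G_t)_t$ since the condition $\{X_{t^-}\neq\emptyset,\ X_t=\emptyset\}$ is measurable with respect to $(N_s)_{s\leq t}$, and the sequence is defined by standard recursive measurability. On the event $\{\tau_j < T\}$, we have $X_{\tau_j} = \emptyset$ by construction, so no jump of $N$ lies in $(\tau_j-A,\tau_j]$. Because the kernels vanish outside $[0,A]$, the intensities $(\lambda_t^k(f_0))_{t\in[\tau_j,\tau_{j+1})}$ depend only on $\{N_s : s\in[\tau_j,t)\}$ and not on the pre-$\tau_j$ history. Combined with the strong Markov property at $\tau_j$, the conditional law of $N|_{[\tau_j,\tau_{j+1})}$ given $\mathcal G_{\tau_j}$ coincides with the unconditional law of a completed excursion started from the empty configuration. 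This yields i.i.d.\ excursions for $j\in\{1,\dots,J_T-1\}$, independent of the boundary pieces $N|_{[0,\tau_1)}$ (which inherits a bias from $\mathcal G_0\supset\sigma(N_s,s<0)$) and $N|_{[\tau_{J_T},T]}$ (truncated at the deterministic horizon $T$).

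The main obstacle will be the positive recurrence step in the multivariate case: one must control not merely that $\emptyset$ is hit infinitely often but that the expected return time is finite. The coupling described above produces a linear multivariate Hawkes process whose stability hinges on the full $\ell_1$-norm condition $\|S_0^+\|_1 < 1$ (rather than the weaker spectral-radius condition), which is needed to obtain summable cluster sizes uniformly across coordinates and hence a finite mean waiting time until a silent window of length $A$. Once this is in place, the remainder of the proof is a mechanical lift of the univariate construction of \cite{costa18} to vector-valued point measures.
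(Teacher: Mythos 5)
Your proposal mirrors the paper's intended (but omitted) argument: the multivariate lift of Costa's Propositions~3.1, 3.4 and Theorems~3.5, 3.6 via stochastic domination by a mutually-exciting process driven by the same Poisson measure, the $M^K/G^K/\infty$ queue identification for the return time, and the strong Markov restart at the regeneration times $\tau_j$ combined with the finite-memory observation that $X_{\tau_j}=\emptyset$ decouples the excursion $N|_{[\tau_j,\tau_{j+1})}$ from $\mathcal G_{\tau_j}$. One small caveat: the dominating linear process should carry baseline $r_k^0 = \theta_k^0 + \psi(\nu_k^0)$ rather than $\nu_k + \theta_k$ (these coincide only when $\psi$ acts as the identity on the baseline, as in the ReLU case with $\nu_k\geq 0$); this has no effect on the qualitative conclusions of the present lemma but does enter the quantitative moment bounds of Lemma~\ref{lem:tau}.
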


The proof of the previous lemma is omitted since it is a fairly direct multivariate extension of some elements of Proposition 3.1, Proposition 3.4, Theorem 3.5 and Theorem 3.6 in \cite{costa18}, recalled in Section \ref{sec:results_costa}  in the supplementary material \cite{supplementary}. For the extension to bounded models, we use a direct consequence of the results in Costa et al. \cite{costa18} that if $N$ is dominated by a homogeneous Poisson point process, then it also have the regenerative properties of Lemma~\ref{lem:excursions}. We also note that since $A$ is known, the renewal times $\tau_j$'s are observable. In the rest of this article, we denote
\begin{equation}\label{eq:def_tau}
\Delta \tau_1 = \tau_2 - \tau_1,
\end{equation}
the length of a generic excursion. For any link functions $\phi_k$'s and parameter $f = (\nu, h)$, we denote $r_f$ the value of the intensity process at the beginning of each excursion, defined as
\begin{equation}\label{eq:r_0}
r_f = (r_1^f, \dots, r_K^f),\quad r_k^f = \phi_k(\nu_k), \quad k \in [K].
\end{equation}
In the next two lemmas, we prove some useful results on the distributions of $\Delta \tau_{1}$, on the number of points in a generic excursion $N[\tau_1, \tau_2)$ and on the number of excursions in the observation window $[-A,T]$, $J_T$, defined in  \eqref{def:J_T}.

\begin{lemma}\label{lem:tau}
Under the assumptions of Lemma \ref{lem:excursions}, the random variables $\Delta \tau_1$ and $N[\tau_1, \tau_2)$ admit exponential moments. More precisely, under condition \textbf{(C1bis)}, with  $m = \norm{S^+}<1$, we have 
$$
\forall s < \min (\norm{r_f}_1,\gamma/A), \quad \Exf{e^{s \Delta \tau_1}}  \leq \frac{1+m}{2m}, \quad \text{and}\quad \Exf{e^{sN[\tau_{1},\tau_2)}} < + \infty, \quad \gamma = \frac{1- m}{2\sqrt{K}} \log \left(\frac{1 + m}{2m}\right).
$$
Under condition \textbf{(C2)}, we have
$
\forall s < \min_k \Lambda_k), \: \Exf{e^{s \Delta \tau_1}}  \leq \frac{\norm{\Lambda}_1^2 }{ (\min_k \Lambda_k - s)^2}$ and $ \Exf{e^{sN[\tau_{1},\tau_2)}} < + \infty$.
In particular, this implies that $\Exf{N[\tau_{1},\tau_2) + N[\tau_1,\tau_2)^2} < + \infty.$
\end{lemma}

\begin{remark}
The previous lemma provides exponential moments of $\Delta \tau_1$ and $N[\tau_1, \tau_2)$, under the assumption that $\norm{S^+} < 1$ \textbf{(C1bis)}, but we conjecture that results of Lemma~\ref{lem:tau} still holds under the more general conditions $r(S^+) < 1$ \textbf{(C1)}  of Lemma \ref{lem:existence}.
\end{remark}


\begin{lemma}\label{lem:conc_J}
Under the assumptions of Lemma \ref{lem:excursions}, for any $\beta > 0$, there exists a constant $c_\beta > 0$ such that
$
\Probf{J_T \notin [J_{T,\beta, 1}, J_{T,\beta, 2}]} \leq  T^{-\beta},
$
with $J_T$ defined in \eqref{def:J_T} and
\begin{align*}
&J_{T,\beta, 1} = \left \lfloor \frac{T}{\Exf{\Delta \tau_1}} \left(1 - c_\beta \sqrt{\frac{\log T}{T}}\right)\right \rfloor, \quad J_{T,\beta, 2} = \left \lfloor \frac{T}{\Exf{\Delta \tau_1}} \left(1 + c_\beta \sqrt{\frac{\log T}{T}}\right)\right \rfloor.
\end{align*}
\end{lemma}

The proofs of Lemmas  \ref{lem:tau} and \ref{lem:conc_J} are reported in Section \ref{sec:proof_lem_tau}  in the supplementary material \cite{supplementary}.

\subsection{Proof of Theorem \ref{thm:conc_g} and Case 1 of Proposition~\ref{prop:relu}}\label{sec:proof_conc_g}

In this section, we prove our main posterior concentration theorem, Theorem~ \ref{thm:conc_g}, as well as Case 1 of Proposition~\ref{prop:relu},  which deals with the specific case of the standard ReLU model. The first step of this proof borrows some ideas from the one of Theorem 3 in \cite{donnet18}, but also introduces novel elements built from the renewal properties of the process. In particular, the posterior concentration is first proved in terms of a particular distance on the intensity process (see Proposition~\ref{thm:d1t} below), which in fact corresponds to a stochastic (pseudo) distance on the parameter space $\mathcal{F}$. This stochastic distance $\Tilde{d}_{1T}$ resembles the $L_1$ stochastic distance used in \cite{donnet18}, except that it is restricted to a subset of the observation window $[-A,T]$ which only contains the beginning of each excursion. More precisely for any excursion index $j \in [J_T-1]$, we denote $(U_j^{(1)}, U_j^{(2)})$ the times of the  first two events after the $j$-th renewal time $\tau_j$ (as defined in Lemma \ref{lem:excursions}). We note that by definition, $U_j^{(1)} \in [\tau_j,\tau_{j+1})$, \: $U_j^{(2)} \in [\tau_j,\tau_{j+2}]$ and $\tau_{j+1} \geq U_j^{(1)} + A$. We then define our restricted observation window $A_2(T)$ as
\begin{equation}\label{def:A2}
    A_2(T) := \bigcup_{j=1}^{J_T - 1} [\tau_j, \xi_j],
\end{equation}
with $\xi_j :=  U_j^{(2)}$ if $U_j^{(2)} \in [\tau_j,\tau_{j+1})$ and $\xi_j := \tau_{j+1} $ 
otherwise. We note that the interval $[\tau_j, \xi_j]$ corresponds either to the beginning of the $j$-th excursion or to the whole excursion $[\tau_j, \tau_{j+1})$ when the latter contains only one event, implying that $U_j^{(2)} \geq  \tau_{j+1}$. Moreover, since the renewal times (and $J_T$) are observable, so is $A_2(T)$.


 The construction of $A_2(T)$ is a novel and essential element of  our proof. Informally, it corresponds to a set of intervals where the parameters can be inferred in a similar way as in the linear Hawkes model and which Lebesgue measure is of order $T$. 
More precisely, using the renewal properties from Section~\ref{sec:lem_excursions},  we will prove, using Lemma \ref{lem:main_event}, that with probability going to 1,  $|A_2(T)| \gtrsim T$ under $\mathbb P_0$. We can now define our auxiliary stochastic distance as
\begin{equation} \label{tilded1}
    \Tilde{d}_{1T}(f,f') = \frac{1}{T} \sum_{k=1}^K \int_0^T \mathds{1}_{A_{2}(T)}(t) |\lambda_{t}^k(f) - \lambda_{t}^k(f')| dt,
\end{equation}
and state our intermediate posterior concentration rate result, which holds for all models satisfying the conditions of Theorem ~\ref{thm:conc_g} and the ReLU-type models considered in Proposition ~\ref{prop:relu}.


\begin{proposition}{}\label{thm:d1t}
Under the assumptions of Theorem \ref{thm:conc_g} or Proposition ~\ref{prop:relu}, for  $M_T' = M' \sqrt{\kappa_T}$ with $M' > 0$ a large enough constant,
$$\mathbb{E}_0 \left[\Pi(\Tilde{d}_{1T}(f,f_0) > M_T' \epsilon_T|N) \right]  = o(1). $$
\end{proposition}
The proof of the previous proposition follows the strategy of \cite{donnet18} in Theorem 1, which is based on the now well-known argument by \cite{ghosal:vdv:07}. However, we note that in our setting, this strategy can be applied thanks to the definition of the stochastic distance which restricts the observation window to the set $A_2(T)$. We recall here its main steps. First, we restrict the space of probability events to a subset $\Tilde{\Omega}_T$ that has high probability (see below and Lemma \ref{lem:main_event}). Secondly, we prove a lower bound of the denominator $D_T$ defined in \eqref{def:pposterior_dist}, derived from the technical Lemma \ref{lem:KL}. Thirdly, we consider  a ball centered at the true parameter $f_0$ of radius $M_T' \epsilon_T$ w.r.t.  $\Tilde{d}_{1T}$, denoted by $A_{d_1}(M_T' \epsilon_T) \subset \mathcal{F}$. Finally, to find an upper bound of the numerator $N_T(A_{d_1}(M_T' \epsilon_T)^c)$ defined in \eqref{def:pposterior_dist}, we partition $A_{d_1}(M_T' \epsilon_T)^c$  into slices $\{S_i\}_i$ on which we can design tests that have exponentially decreasing type I and type II errors (see Lemma \ref{lem:tests}). We then define $\phi$ as the maximum of the tests on the individual slices $S_i$. Due to the space constraints, this proof is reported in Section \ref{sec:proof_thm_d1t} of the supplementary material \cite{supplementary}.

From Proposition \ref{thm:d1t}, we prove Theorem \ref{thm:conc_g} and Case 1 of Proposition \ref{prop:relu} using the following classical  decomposition (see for instance the proof of Theorem 1 in \cite{donnet18}).  Let $A, B \in \mathcal F_T \subset \mathcal F$, with $B$ possibly data dependent, $\phi \in [0,1]$ be a measurable test, $\kappa_T$ defined in \eqref{kappaT}, and $\eve \subset \Omega$. 
Then,
\small
\begin{align}\label{decomposition}
 \Exz{\Pi(A \cap B |N)} &\leq \Probz{\{D_T < e^{-(\kappa_T + c_1) T \epsilon_T^2}\} \cap \eve} + \Exz{\phi \mathds{1}_{\Tilde{\Omega}_T}}+ \probz{\eve^c} \nonumber\\
& + e^{(\kappa_T + c_1) T \epsilon_T^2}\Pi( \mathcal F_T^c)  + e^{(\kappa_T+c_1) T \epsilon_T^2}\int_{A\cap \mathcal F_T} \Exz{\Exf{(1 - \phi)\mathds{1}_B(f)\mathds{1}_{\eve}(N)\Big| \mathcal{G}_0} } d\Pi(f).
\end{align}
\normalsize

We first introduce the set $\Tilde{\Omega}_T$, which from Lemma~\ref{lem:main_event}, has probability $\Probz{\eve^c}$ going to 0 at any polynomial rate. For $T > 0$, we denote
\begin{equation*}
\mathcal{J}_T := \left \{ J \in \N; \: \left|\frac{J-1}{T} - \frac{1}{\mathbb{E}_0[\Delta \tau_1]} \right| \leq c_\beta \sqrt{\frac{\log T}{T}} \right \},
\end{equation*}
with $c_\beta > 0$ (and $\beta > 0$) chosen in Lemma \ref{lem:main_event}, and, with $r_0 := r_{f_0} = (r_1^0, \dots, r_K^0)$ where $r_k^0 = \phi_k(\nu_k^0)$, and $\mu_k^0 = \Exz{\lambda_t^k(f_0)}$, for any $k$,
\begin{align*}
       \Omega_N &= \left \{ \max \limits_{k \in [K]} \sup \limits_{t \in [0,T]} N^k[t-A,t) \leq C_\beta \log T  \right \} \cap \left \{ \sum_{k=1}^K \left|\frac{N^k[-A,T]}{T} - \mu_k^0\right| \leq \delta_T  \right \}, \\
        \Omega_{J} &= \left\{ J_T \in \mathcal{J}_T \right \}, \quad \Omega_{U} =  \left\{ \sum_{j=1}^{J_T-1} (U_j^{(1)} - \tau_j) \geq 
     \frac{T}{\mathbb{E}_0[\Delta \tau_1] \|r_0\|_1} \left(1 - 2c_\beta\sqrt{\frac{\log T }{T}}\right) \right \}, 
\end{align*}
with $\delta_T = \delta_0 \sqrt{\frac{\log T}{T}}, \: \delta_0 > 0$ and $C_\beta > 0$ chosen in Lemma  \ref{lem:main_event} and define
\begin{equation}\label{def:Omega}
\tilde\Omega_T = \Omega_N \cap \Omega_J \cap \Omega_U.
\end{equation}
The sets $\Omega_N,$ $\Omega_J$ and $\Omega_U$  control respectively the number of events, the number of excursions and the length of excursions. First, $\Omega_N$ corresponds to realisations of $N$ such that the number of events in any interval of length $A$ is upper bounded by $c_\beta \log T$, and the number of events on $[-A,T]$ is close to its expectation under the stationary distribution $\mathbb{P}_0$. Secondly, $\Omega_J$ corresponds to the realisations such that the number of excursions in the observation interval $[0,T]$ divided by $T$, $J_T/T$, is close to its limit $1 / \mathbb{E}_0[\Delta \tau_1]$. Thirdly, on $\Omega_U$, the measure of the subset corresponding to the collections of the beginnings of excursions (from $\tau_j$ to the first event $U_j^{(1)}$) is of order $T$. 


Next, we bound the denominator of the posterior $D_T$ from  \eqref{def:pposterior_dist}. From Lemma \ref{lem:KL}, together with the lower bound technique of \cite{ghosal:vdv:07}, we have that 
\begin{equation}\label{DT}
 \Probz{D_T< \Pi(B_\infty(\epsilon_T)) e^{-\kappa_TT\epsilon_T^2} } \leq 2\int_{B_\infty(\epsilon_T)} \frac{\probz{L_T(f) - L_T(f_0) < - \kappa_T T\epsilon_T^2/2}}{ \Pi (B_\infty(\epsilon_T))}d\Pi(f) = o(1),
 \end{equation}
which leads to $ \Probz{D_T < e^{-(\kappa_T + c_1) T \epsilon_T^2}} = o(1)$ using asssumption \textbf{(A0)}. 

Then, we find a lower bound on $|A_2(T)|$ on $\tilde \Omega_T$. 
We recall that the point process measures $(N|_{[\tau_j, \tau_{j+1})})_{1\leq j \leq J_T-1}$ are i.i.d. and \textit{a fortiori} that the random variables $\{U_j^{(1)} - \tau_j\}_j$ are i.i.d.
Moreover,  for any $j \in [J_T-1]$, $t \in [\tau_j, U_j^{(1)})$ and $k \in [K]$, the intensity process is by construction equal to $\lambda^k_t(f_0)=r_k^0 = \phi_k(\nu_k^0)$. Therefore,  
 conditionally on $\tau_j$, $U_j^{(1)}$ has the same distribution as an event from a Poisson point process beginning at $\tau_j$, with intensity $\norm{r_0}_1$, since the process is the superposition of $K$ univariate Poisson process with intensity $r_k^0,  \: k \in [K]$. Thus, under $\mathbb{P}_0$, each variable $U_j^{(1)} - \tau_j$ follows an exponential distribution with mean $1/\norm{r_0}_1$, 
and on $\Omega_U$, for $T$ large enough, we have that
\begin{align*}
    |A_2(T)| = \sum_{j=1}^{J_T-1} (\xi_j - \tau_j) \geq  \sum_{j=1}^{J_T - 1} (U_j^{(1)} - \tau_j)  \geq c_0 T , \quad c_0:= \frac{ 1}{2 \Exz{\Delta \tau_1} \norm{r_0}_1}.
\end{align*}
Finally, for $R>0$, we define the balls in $L_1$ and stochastic distances
\begin{align*}
&A_{L_1}(R) := \{ f \in \mathcal{F} ; \: \norm{f - f_0}_1 \leq R \}, \quad A_{d_1}(R) = \{ \Tilde{d}_{1T}(f, f_0) \leq R  \}.
\end{align*}



We now apply the decomposition \eqref{decomposition} with $\phi = 1$, $A := A_{L_1}(M_T \epsilon_T)^c$ and $B := A_{d_1}(M_T' \epsilon_T)$, with $M_T = M \sqrt{\kappa_T} $,  $M_T' = M' \sqrt{\kappa_T}  $, $M > M'$ and $M'$ defined in Theorem \ref{thm:d1t}. As in the proof of Theorem~3 of \cite{donnet18}, we are thus left to prove that
\begin{equation}\label{eq:sup_ball_l1}
    \sup \limits_{A_{L_1}(M_T \epsilon_T)^c \cap \mathcal{F}_T} \Probf{\Tilde{\Omega}_{T} \cap A_{d_1}(M_T '\epsilon_T) | \mathcal{G}_0} = o_{\mathbb{P}_0}(e^{-(c_1+\kappa_T) T \epsilon_T^2}),
\end{equation}
with $c_1$ defined in assumption \textbf{(A0)}. We recall that $\mathbb{P}_f$ is the process distribution associated to parameter $f$ defined in \eqref{eq:def_pf}. To prove \eqref{eq:sup_ball_l1}, we consider $f \in A_{L_1}(M_T \epsilon_T)^c$ such that $\Tilde{d}_{1T}(f,f_0) \leq M_T' \epsilon_T$ and for $l \in [K]$ and $j \in [J_T-1]$, we define
\begin{align}\label{eq:def_zjl}
    Z_{jl} := \int_{\tau_{j}}^{\xi_j} |\lambda^l_t(f) - \lambda^l_t(f_0)|dt. 
\end{align}
We note that using Lemma \ref{lem:excursions}, the random variables $\{Z_{jl}\}_{ j \in [ J_T-1]}$ are i.i.d., and from \eqref{tilded1} we also have that $T \Tilde{d}_{1T}(f,f_0)  > \max \limits_{l \in [K]} \sum \limits_{j=1}^{J_T-1} Z_{jl}$. In order to derive a Bernstein-type inequality on the sum of the $Z_j$'s, we first find an upper bound of $Z_{1l}$ and its moments. Using that the link functions $\phi_k$'s are $L$-Lipschitz, we have 
\begin{align} \label{UBZ}
     Z_{jl} &=  \int_{\tau_{j}}^{\xi_j} |\phi_k(\Tilde \lambda^l_t(\nu, h)) - \phi_k(\Tilde \lambda^l_t(\nu_0, h_0))|dt 
      \leq L \int_{\tau_{j}}^{\xi_j} | \Tilde \lambda^l_t(\nu, h) - \Tilde \lambda^l_t(\nu_0, h_0)|dt \nonumber\\
     &\leq L  (\xi_j - \tau_j)|\nu_l - \nu_l^0| + L \sum_k \int_{U_{j}^{(1)}}^{\xi_j} |h_{kl}-h_{kl}^0|(t - U_j^{(1)}) dt \nonumber\\
     &\leq L (A+U_{j}^{(1)} - \tau_j)|\nu_l - \nu_l^0|  + L\sum_k \|h_{kl}-h_{kl}^0\|_1 \leq L (A+1+U_{j}^{(1)} - \tau_j) \norm{f - f_0}_1.
 \end{align}
Moreover, under $\mathbb{P}_f$, for any $j \in [J]$, $U_{j}^{(1)} - \tau_j$ follows an exponential distribution with mean $1/\norm{r_f}_1$, therefore, for any $n \in \N$,
$
\Exf{(U_{j}^{(1)} - \tau_j)^n} = \frac{n!}{\norm{r_f}_1^n}.
$
Using the standard inequality $(x+y)^n\leq 2^{n-1}(x^n+y^n)$, we thus obtain that
\begin{align}
\Exf{Z_{1l}^n} &\leq 2^{n-1} L^n \left((A+1)^n + \Exf{(U_{j}^{(1)} - \tau_j)^n} \right) \norm{f - f_0}_1^n \nonumber \\
&\leq \frac{1}{2} 2 n! \left(2 L \max \left(A+1, \frac{1}{\norm{r_f}_1}\right) \norm{f - f_0}_1\right)^{n-2} \times L^2  \max \left(A+1, \frac{1}{\norm{r_f}_1}\right)^2\norm{f - f_0}_1^2 \leq  \frac{1}{2} n! b^{n-2} v^2, \label{eq:ef_n}
\end{align}
with $b := 2 L \max \left(A+1, \frac{2}{\norm{r_0}_1}\right) \norm{f - f_0}_1$ and $v := L \max \left(A+1, \frac{2}{\norm{r_0}_1}\right) \norm{f - f_0}_1$.
%
In the last inequality, we have used the fact that   $\|r_f - r_0 \|_1\ \lesssim \Tilde d_{1T}(f,f_0) \leq M_T' \epsilon_T$ on $\Tilde{\Omega}_{T}$. This is because $ (U_1^{(1)} - \tau_1) + \dots + (U_{J_T-1} ^{(1)} - \tau_{J_T - 1}) \geq c_0 T /2$, which leads to
\begin{align}
    T \Tilde{d}_{1T}(f,f_0) &\geq \sum_{k}  |r_k^f- r_k^0|  \left((U_1^{(1)} - \tau_1) + \dots + (U_{J_T-1} ^{(1)} - \tau_{J_T - 1})\right)   \geq \frac{ T\sum_{k}  |r_k^f- r_k^0|  }{ 2 \Exz{\Delta \tau_1} \norm{r_0}_1}.\label{eq:min_d1tilde}
\end{align}
It also implies that  $\|r_f \|_1 \geq  \norm{r_0}_1 - \norm{r _f- r_0}_1 \geq \|r_0 \|_1/2$ for $T$ large enough. 


Our final argument consists in using the lower bound on $\Exf{Z_{1l}} $ obtained in Lemma \ref{lem:ef}. In this technical lemma, we show that there exists $l \in [K]$ and $C(f_0) > 0$ such that
$
    \Exf{Z_{1l}} \geq C(f_0)  \norm{f - f_0}_1.
$
Therefore, for this $l$,
\begin{small}
\begin{align*}
    &\Probf{\Tilde{\Omega}_{T} \cap \{\Tilde{d}_{1T}(f,f_0) \leq M_T' \epsilon_T\}\Big|\mathcal{G}_0}\leq \Probf{\eve \cap \left \{\sum_{j=1}^{J_T-1} Z_{jl} \leq M_T' T \epsilon_T \right \} \Bigg|\mathcal{G}_0 } \\
     &\leq \Probf{ \eve \cap \left \{ \sum_{j=1}^{J_T-1} (Z_{jl} - \Exf{Z_{jl}})  \leq  M_T' T\epsilon_T - (J_T -1 ) \Exf{Z_{jl}} \right \} \Bigg|\mathcal{G}_0 } \\
  &\leq \Probf{\bigcup_{J \in \mathcal{J_T}} \left \{ \sum_{j=1}^{J-1} (Z_{jl} - \Exf{Z_{jl}})  \leq - \frac{C(f_0)T \norm{f - f_0}_1}{4\mathbb{E}_0[\Delta \tau_1]} \right \} \Bigg|\mathcal{G}_0} \leq   \sum_{J \in \mathcal{J}_T} \Probf{ \sum_{j=1}^{J-1} (Z_{jl} - \Exf{Z_{jl}})  \leq - \frac{C(f_0)T \norm{f - f_0}_1}{4\mathbb{E}_0[\Delta \tau_1]} \Bigg| \mathcal{G}_0 },
\end{align*}
\end{small}
where we have used, for the third inequality, that on $\Tilde{\Omega}_{T}$,  $J_T - 1 \geq \frac{T}{2\mathbb{E}_0[\Delta \tau_1]}$, $\norm{f - f_0}_1 \geq M_T \epsilon_T$ and $M_T' < M_T$. 
For each $J \in \mathcal{J}_T$, we can now apply the Bernstein's inequality:
\begin{align*}
\Probf{\sum_{j=1}^{J-1} (Z_{jl} - \Exf{Z_{jl}})\leq x} \leq \exp \left \{ - \frac{x^2}{2(J-1)(v^2 + bx)}\right\},
\end{align*}
with $x = - \frac{C(f_0)T \norm{f - f_0}_1}{4\mathbb{E}_0[\Delta \tau_1]} $. We first upper bound the term $v^2+bx$:
\begin{align*}
v^2+ b \frac{C(f_0) \norm{f - f_0}_1}{4\mathbb{E}_0[\Delta \tau_1]} &\leq L \max \left( A+1, \frac{2}{\|r_0\|_1} \right)\left(L \max \left( A+1, \frac{2}{\|r_0\|_1} \right) + \frac{C(f_0)}{2 \norm{r_0}_1 \Exz{\Delta \tau_1}} \right) \norm{f - f_0}_1^2\\
&= C_1(f_0) \norm{f - f_0}_1^2, 
\end{align*}
with $ C_1(f_0) := L \max \left( A+1, \frac{2}{\|r_0\|_1} \right)\left(L \max \left( A+1, \frac{2}{\|r_0\|_1} \right) + \frac{C(f_0)}{2 \norm{r_0}_1 \Exz{\Delta \tau_1}}\right)$.
Finally, we  obtain that
\begin{small}
\begin{align*}
    \Probf{\sum_{j=1}^{J-1} (Z_{jl} - \Exf{Z_{jl}})\leq - \frac{C(f_0)T  \norm{f - f_0}_1}{4\mathbb{E}_0[\Delta \tau_1]}\Bigg|\mathcal{G}_0} \leq \exp \left\{-\frac{C(f_0)^2T^2 \norm{f - f_0}_1^2}{8 (J-1)  C_1(f_0)  \norm{f - f_0}_1^2} \right \} \leq  \exp \left\{-\frac{C(f_0)^2 T}{16 C_1(f_0) }\right\},
\end{align*}
\end{small}


and since $\kappa_T\epsilon_T^2=o(1)$, we can conclude that
\begin{align*}
    &\Probf{\Tilde{\Omega}_{T} \cap \{\Tilde{d}_{1T}(f,f_0) \leq M_T' \epsilon_T\} \Big|\mathcal{G}_0}
    \leq \frac{2T}{\Exz{\Delta \tau_1}} \exp \left\{-\frac{C(f_0)^2 T}{16 C_1(f_0) }\right\} = o(e^{-(c_1 + \kappa_T) T \epsilon_T^2}),
\end{align*}
which corresponds to \eqref{eq:sup_ball_l1} and terminates the proof of Theorem \ref{thm:conc_g} and Case 1 of Proposition \ref{prop:relu}.

\subsection{Proof of Case 2 of Proposition \ref{prop:relu}}\label{sec:proof_conc_f}

We recall that in this case we consider a shifted ReLU model with unknown shift $\theta_0 = (\theta_1^0, \dots, \theta_K^0)$, corresponding to a particular case of partially known link functions
$\phi_k(x; \theta_k) = \theta_k + (x)_+,$
and for parameter $f \in \mathcal{F}$ and $\theta \in \Theta$, we denote $\lambda_t(f,\theta)$ the intensity process. We note that in this case, $r_0 = \theta_0 + \nu_0$ and similarly $r_f  = \theta + \nu$, with $r_f$ defined in \eqref{eq:r_0}. We then prove the posterior concentration rate on both $f_0$ and $\theta_0$.
First, we apply the same steps as in the proof of Theorem~\ref{thm:conc_g} in Section ~\ref{sec:proof_conc_g}, replacing $\norm{f - f_0}_1$ by $\norm{r_0 -r_f}_1 + \norm{h - h_0}_1 = \norm{\theta_0 + \nu_0 - \theta - \nu}_1 + \norm{h - h_0}_1$. 
In particular, we re-define the balls w.r.t. the $L_1$-distance as (for simplicity we keep the same notation)
\begin{align*}
\end{align*}
We therefore obtain (see also Remark ~\ref{rem:unknown_link})
\begin{align}\label{eq:partial_conc}
\mathbb{E}_0 \left[\Pi\big(\norm{ h -  h_0}_1 + \norm{\theta_0 + \nu_0 - \theta - \nu}_1 > M \sqrt{\kappa_T} \epsilon_T \big| N \big) \right]  = o(1).
\end{align}


Secondly, we design a test to separate $\theta_0$ and $\nu_0$. For this, we restrict again the set $\eve$ to a high probability set $\Omega_A$, where $\theta_0$ can be correctly estimated. Let
\begin{align*}
&A^k(T) = \{t \in [0,T]; \: \Tilde \lambda_t^k(\nu_0, h_0) < 0 \}, \quad \Omega_A = \{|A^k(T)| > z_0 T, \: \forall k \in \mathcal{K}\}, \quad 1 \leq k \leq K, 
\end{align*}
with $z_0 > 0$ defined in the proof of Lemma \ref{lem:main_event} (see Section \ref{app:proof_lem_event}  in the supplementary material \cite{supplementary}), and define $\eve' = \eve \cap \Omega_A$. Moreover, we define a neighborhood around $\theta_0$, $\bar A(R) := \{\theta \in \Theta ; \: \norm{\theta - \theta_0}_1 \leq R\}$ and $\tilde{M}_T = \Tilde{M} \sqrt{\kappa_T}$ with $ \Tilde{M} > M$.
Using again the decomposition \eqref{decomposition}, with $A = \bar A(\tilde{M}_T \epsilon_T)^c$, $B = A_{L_1}(M_T\epsilon_T)$, 
and the subset $\eve'$,  we thus only need to construct a test function  $\phi \in [0,1]$ verifying:
\begin{align}  \label{test:theta:1}
\Exz{\phi \mathds{1}_{\Tilde{\Omega}_T'}} = o(1) , 
\quad \sup_{\theta \in \bar A(\tilde{M}_T\epsilon_T)^c, f \in A_{L_1}(M_T\epsilon_T)\cap \mathcal F_T} \Exz{\Exf{(1 - \phi)\mathds{1}_{\Tilde{\Omega}_T'}} \Big| \mathcal{G}_0} = o( e^{-(\kappa_T + c_1) T \epsilon_T^2}).
\end{align}

To construct this test, we first consider some arbitrary parameter $ f_1 = ((\nu^1_k)_k, (h_{lk}^1)_{l,k}) \in  A_{L_1}(M_T\epsilon_T) $ and $\theta_1 = (\theta_k^1)_k \in \bar A(\tilde{M}_T\epsilon_T)^c  $, and for any $k \in [K]$, we define the following subset of the observation window
\begin{align}\label{def:I_0_model1}
I_k^0(f_1, \theta_1) = \left \{t \in [0,T]; \: \lambda_t^k(f_1, \theta_1) = \theta_k^1, \, \lambda^k_t(f_0, \theta_0) = \theta_k^0 \right  \}.
\end{align}
By construction $\theta_k^0$ and $\theta_k^1$ can be identified on the set $I_k^0(f_1, \theta_1)$, hence we need $I_k^0(f_1, \theta_1)$ to be large enough in order to test between $\theta_k^0$ and $\theta_k^1$. We can ensure this by defining a controlled set of excursions $\mathcal{E}$. Let $l \in [K]$ such that $h_{lk}^{0-} \neq 0$, $\delta'=(x_2-x_1)/3$ with $x_1, x_2$ defined in condition \eqref{ass:identif_theta}, $c_\star = \min_{x \in [x_1,x_2]} h_{lk}^{0-}(x)$ and  $n_1 = \lfloor 2 \nu_k^1/(\kappa_1 c_\star ) \rfloor +1$ 
for some $0< \kappa_1 <1$. We consider the following subset of excursions:
\begin{align} \label{E:model1} 
\mathcal{E} := \{j \in [J_T]; \: N[\tau_{j}, \tau_{j} + \delta') = N^l[\tau_{j}, \tau_{j} + \delta') = n_1, N[\tau_{j} + \delta', \: \tau_{j+1}) = 0 \},
\end{align}
where the $\tau_j$'s are the regenerative times defined in Lemma \ref{lem:excursions}. Using the intermediate result \eqref{eq:lower_bound_I} from the proof of Lemma \ref{lem:test_theta} in the supplementary material \cite{supplementary}, if $|\mathcal E|$ is large enough, then we can find a lower bound on $|I_k^0(f_1, \theta_1)|$.  We then define our generic test function:
\begin{small}
\begin{align} \label{def:phif1}
    \phi(f_1, \theta_1) := \max_{k \in [K]} \min \left(\mathds{1}_{N^k(I_k^0(f_1, \theta_1)) - \Lambda_k^0(I_k^0(f_1, \theta_1)) < - v_T} \vee \mathds{1}_{|\mathcal{E}| < \frac{p_0 T}{2 \Exz{\Delta \tau_1}}}, \mathds{1}_{N^k(I_k^0(f_1, \theta_1 )) - \Lambda_k^0(I_k^0(f_1, \theta_1)) > v_T} \vee \mathds{1}_{|\mathcal{E}| < \frac{p_0 T}{2\Exz{\Delta \tau_1}}}\right), 
\end{align}
\end{small}
where $p_0 = \Probz{j \in \mathcal E}$, $\Lambda_k^{0}(I_k^0(f_1, \theta_1)) = \int_0^T\mathds{1}_{ I_k^0(f_1, \theta_1)}\lambda^k_t(f_0, \theta_0)dt$, $v_T = w_T T \e_T$, $w_T = 2\sqrt{\max_k \theta_k^0 (\kappa_T + c_1)} + 2x_0$ and $x_0$ from assumption \textbf{(A2)}.
From Lemma \ref{lem:test_theta}, 
there exists $u_1> 2x_0$ and $\zeta \in (0,1) $ such that
\begin{small}
\begin{align}\label{test:fi2}
\Exz{\phi(f_{1}, \theta_1) \mathds{1}_{\Tilde{\Omega}_T'}} \leq e^{ -u_1 T \epsilon_T^2 }, \quad
&\sup_{\|f-f_1\| + \norm{\theta - \theta_1} \leq \zeta \epsilon_T} \Exz{\Exf{(1 - \phi(f_1, \theta_1))\mathds{1}_{\Tilde{\Omega}_T'}} \Big| \mathcal{G}_0} = o( e^{-(\kappa_T + c_1) T \epsilon_T^2}).
\end{align}
\end{small}

To define our global test $\phi$, we first cover  the space $\bar A(\tilde{M}_T\epsilon_T)^c \times A_{L_1}(M_T\epsilon_T)\cap \mathcal F_T$ with $L_1$-balls $\{B_i\}_{1 \leq i \leq \mathcal N}$ of radius $\zeta \epsilon_T$, with $\zeta > 0$ and $\mathcal N \in \mathbb{N}$ the covering number. For each ball $B_i$ centered at $(f_{i}, \theta_i)$, we define the elementary test $\phi(f_{i}, \theta_i)$ as in \eqref{def:phif1}. Then we define $\phi := \max_{i \in \mathcal{N}} \phi(f_{i}, \theta_i)$, and obtain that
\begin{align*}
\Exz{\phi \mathds{1}_{\Tilde{\Omega}_T'}} \leq \mathcal N e^{-u_1 T \epsilon_T^2}, \quad
&\sup_{\theta \in \bar A(\Tilde{M}_T \epsilon_T)^c, f \in A_{L_1}(M_T \epsilon_T)\cap \mathcal F_T}  \Exz{\Exf{(1 - \phi)\mathds{1}_{\Tilde{\Omega}_T'}} \Big| \mathcal{G}_0} = o( e^{-(\kappa_T + c_1) T \epsilon_T^2}).
\end{align*}

Next, we find an upper bound of the covering number $\mathcal{N}$ using assumption \textbf{(A2)}. We note that if $f \in  A_{L_1}(M_T\epsilon_T)$, then for any $( l,k) \in [K]^2$, $
\theta_k \leq \theta_k + \nu_k = r_k^f \leq  r_k^0 + \epsilon_T \leq 2( \theta_k^0 + \nu_k^0)  $. Consequently, using similar computations as in the proof of Proposition \ref{thm:d1t} (see Section \ref{sec:proof_thm_d1t} of the supplementary material \cite{supplementary}), one can find $x_0' > 0$ such that
\begin{align*}
\mathcal{N} &\leq \left(\frac{ 2\max_k(\theta_k^0 + \nu_k^0) }{\zeta \epsilon_T}\right)^K \left(\frac{\max_k \nu_k^0 + \epsilon_T}{\zeta \epsilon_T}\right)^K \mathcal{N}(\zeta \epsilon_T, \mathcal{H}_T, \norm{.}_1) \lesssim e^{- K\log \epsilon_T} e^{x_0' T \epsilon_T^2} \lesssim e^{K\log T} e^{x_0' T \epsilon_T^2} =o( e^{u_1 T \epsilon_T^2}),
\end{align*}
 since $\log T = o(T \epsilon_T^2)$ by assumption. Hence, reporting into \eqref{test:fi2}, this proves that \eqref{test:theta:1} holds and allows us to conclude that
$
\Exz{\Pi(\bar{A}(\Tilde{M}_T\epsilon_T)^c|N)} = \Exz{\Pi(\norm{\theta -\theta_0}_1 > \Tilde{M}\sqrt{\kappa_T} \epsilon_T|N)} = o(1).
$
Finally, since $\Tilde{M} > M$, from \eqref{eq:partial_conc}, we also  have that 
$
\Exz{\Pi(\norm{\nu+\theta - \nu_0 - \theta_0}_1 + \norm{h - h_0}_1 > \Tilde{M} \sqrt{\kappa_T}  \epsilon_T|N)} = o(1),
$.
Therefore it only remains to prove that
$
\Exz{\Pi(\norm{\nu - \nu_0}_1 > \Tilde{M} \sqrt{\kappa_T} \epsilon_T|N)} = o(1).
$
By the triangle inequality, we have
$
\norm{\nu - \nu_0}_1 \leq \norm{\nu+\theta - \nu_0 - \theta_0}_1 + \norm{\theta - \theta_0}_1,
$
and, up to a modification of the constant $\Tilde{M}$,
\small
\begin{align*}
\Exz{\Pi(\norm{\nu-\nu_0}_1 > \Tilde{M} \sqrt{\kappa_T}  \epsilon_T|N)} &\leq  \Exz{\Pi(\norm{\nu+\theta - \nu_0 - \theta_0}_1 > \Tilde{M} \sqrt{\kappa_T}  \epsilon_T|N)} + \Exz{\Pi(\norm{\theta - \theta_0}_1 > \Tilde{M} \sqrt{\kappa_T}  \epsilon_T|N)} = o(1),
\end{align*}
\normalsize
which terminates this proof.

\subsection{Proof of  Theorem \ref{thm:post_graph}} \label{sec:proof_post_graph} 

In this section, we show that in all the models  satisfying the assumptions of Theorem \ref{thm:conc_g} or Proposition ~\ref{prop:relu}, the posterior distribution is consistent on the connectivity graph parameter $\delta_0$. For ease of exposition, we here report the proof for the models considered in Theorem \ref{thm:conc_g}. We first recall the notation $M_T = M \sqrt{\kappa_T}$,
$A_{L_1}(M_T\epsilon_T) = \{ f \in \mathcal{F}; \: \norm{r_f - r_0}_1 + \norm{h - h_0}_1 \leq M_T \epsilon_T\}$, and $I(\delta_0) = \{(l,k) \in [K]^2, \: \delta_{lk}^0 = 1\}$.
We first note that
\begin{small}
\begin{align}\label{eq:Pi_delta}
    \Pi \left(\delta \neq \delta_0 |N \right) &= \Pi \left(\exists (l,k) \in [K]^2, \delta^0_{lk} \neq \delta_{lk} \Big| N \right) \leq \Pi \left(\exists (l,k) \in I(\delta_0), \delta_{lk} = 0 \Big| N \right)
    + \sum_{(l,k)\notin I(\delta_0)} \Pi \left(\delta_{lk} = 1 \Big| N \right).
\end{align}  
\end{small}  
For the first term on the RHS of \eqref{eq:Pi_delta}, using Theorem \ref{thm:conc_g}, we have that 
\begin{align*}
  \Pi \left(\exists (l,k) \in I(\delta_0),  \delta_{lk} = 0 \Big| N \right) &\leq \sum_{(l,k)\in I(\delta_0)} \Pi \left(\{\delta_{lk} = 0\} \cap  A_{L_1}( M_T \epsilon_T) \Big| N \right) + o_{\mathbb{P}_0}(1).
\end{align*}
 For large enough $T$, if $\|h_{lk}^0\|_1 > M_0 M_T \epsilon_T$ with $M_0>1$, then
 \vspace{-3mm}
\begin{align*}
\{f \in \mathcal{F}; \delta_{lk} = 0\} &\subset \{f \in \mathcal{F}; \norm{h^0_{lk} - h_{lk}}_1 = \norm{h^0_{lk}}_1 \} \subset \left\{f \in \mathcal{F}; \norm{h^0_{lk} - h_{lk}}_1 >\frac{  \|h_{lk}^0\|_1}{2} \right\} \subset A_{L_1}(M_T\epsilon_T)^c,  \vspace{-3mm} 
\end{align*}
therefore
$
   \Pi \left( \{\delta_{lk} = 0 \} \cap A_{L_1}(M_T\epsilon_T) \Big| N \right) = 0.
$
For the second term on the RHS of \eqref{eq:Pi_delta}, since $(l,k) \notin I(\delta_0) $ implies that $ \norm{h^0_{lk}}_1 = 0$ and 
$\{\delta_{lk}=1\} \cap A_{L_1}(M_T\epsilon_T) \subset \{f \in \mathcal{F}; \: 0 < \|h_{lk}\|_1 \leq M_T \epsilon_T \}, $
defining
$N_T = \int_{\{\delta_{lk}=1\} \cap A_{L_1}(M_T \epsilon_T)} e^{L_T(f) - L_T(f_0)} d\Pi(f),$ 
and using the decomposition  \eqref{decomposition} with $A = A_{L_1}(M_T\epsilon_T), B = \{\delta_{lk}=1\} $ and $\phi=1$, we obtain that
\small
\begin{align*}
     \mathbb{E}_0 \left[ \Pi(\{\delta_{lk}=1\}\cap A_{L_1}( M_T \epsilon_T) | N) \right]
    & \leq \mathbb{P}_0(D_T < e^{-(\kappa_T+c_1) T \epsilon_T^2}  \cap \eve) + \mathbb{P}_0( \eve^c)+
    e^{(\kappa_T +c_1) T \epsilon_T^2}  \Pi(\{\delta_{lk}=1\} \cap A_{L_1}(M_T\epsilon_T))\\
     &\leq o(1) + e^{(\kappa_T+ c_1) T \epsilon_T^2}\sum_{\delta \in \{0,1\}^{K^2}}  \mathds{1}_{\delta_{lk}=1}\Pi_{h|\delta}\left(\|h_{lk}\|_1 \leq M_T \epsilon_T|\delta \right) = o(1),
\end{align*}
\normalsize
where in the last inequality we have used assumptions \textbf{(A0)-(A1)}, \eqref{eq:cond_prior_graph}, and the construction of the prior from Section \ref{sec:post_consistency}. Consequently, from \eqref{eq:Pi_delta}, we finally arrive at $\Exz{\Pi \left(\delta \neq \delta_0 |N \right)} = o(1)$.

\subsection{Proof of Theorem \ref{cor:graph_estim}}\label{sec:proof_cor_graph}

We here prove the consistency of the penalised estimator defined in \eqref{eq:def_estim}. We consider the models satisfying the assumptions of Theorem ~\ref{thm:conc_g}, although our proof is also valid for the ReLU-type models of Proposition \ref{prop:relu}. Besides, for $f \in \mathcal{F}$, we use the shortened notation 
 $d_{1T} := \Tilde{d}_{1T}(f,f_0)$ and $\hat{\delta}^{\Pi,L} := \hat{\delta}^{\Pi,L} (N)$. We recall that for  $(l,k) \in [K]^2$, $S_{lk} = \norm{h_{lk}}_1$ and the notation from previous proofs,  $M_T =  M\sqrt{\kappa_T}$, $M_T' = M' \sqrt{\kappa_T}$ with $M > M' > 0 $. We first note that
$
    \Probz{\hat{\delta}^{\Pi,L} \neq \delta_0} \leq \sum_{l,k} \Probz{\hat{\delta}^{\Pi, L}_{lk} \neq \delta_{lk}^0}
$
and consider two cases for each $(l,k)$. 
\begin{itemize}
    \item \textbf{Case 1:} $(l,k) \notin I(\delta_0)$, i.e, $\delta_{lk}^0 = 0$. Using \eqref{eq:def_estim} and \eqref{eq:cond_penalisation}, there exists $a > 0$ such that with  $c_1' : = a + c_1 + \kappa_T$, for any $\gamma > 0$, we have
\begin{align}
      \Probz{\hat{\delta}_{lk}^{\Pi,L} \neq \delta_{lk}^0 } &= \Probz{\hat{\delta}_{lk}^{\Pi,L}  = 1} \nonumber \\
      &\leq \Probz{e^{-c_1' T \epsilon_T^2} \Pi(\delta_{lk} = 1, \: S_{lk} \leq M_T \epsilon_T | N) \geq \Pi(\delta_{lk}=0|N) - \Pi(S_{lk}> M_T \epsilon_T | N)}  \nonumber \\ 
   &\leq \Probz{e^{-c_1' T \epsilon_T^2} \Pi(\delta_{lk} = 1, \: S_{lk} \leq M_T \epsilon_T | N) \geq \Pi(\delta_{lk}=0|N)/2  }  \nonumber \\
   &+  \Probz{ \Pi(S_{lk}> M_T \epsilon_T | N) >  \Pi(\delta_{lk}=0|N)/2 }.  \label{eq:X}
\end{align}


To show that the second term in the previous equation is $o(1)$, it is enough to show that
\begin{align}
    &\Probz{ \Pi(d_{1T} > M_T' \epsilon_T | N) > \Pi(\delta_{lk}=0|N)/4} = o(1), \label{eq:U} \\
    &\Probz{ \Pi(d_{1T} \leq M_T' \epsilon_T, \: S_{lk} > M_T \epsilon_T | N) > \Pi(\delta_{lk}=0|N)/4} = o(1) \label{eq:V}.
\end{align}
Let $m_T(\delta_{lk} = 0) := \int_{\mathcal{F}_T} e^{L_T(f) - L_T(f_0)} d\Pi(f|\delta_{lk} = 0)$. Similarly to the computations of the lower bound of $D_T$ in Section \ref{sec:proof_thm_d1t}, we have under \textbf{(A0')} that
$\Probz{ m_T(\delta_{lk} = 0) \leq e^{-\kappa_T' T\epsilon_T^2} } = o(1)$ with $\kappa_T' :=\kappa_T + c_1.$
Using the test function from the proof of Theorem \ref{thm:d1t} in Section \ref{sec:proof_thm_d1t} in the supplementary material \cite{supplementary} $\phi = \max_{i} \phi(f_i)$ (with $\phi(f_i)$ defined in Lemma \ref{lem:tests}) we have
\begin{align*}
    &\Probz{ \Pi(d_{1T} > M_T' \epsilon_T | N) >\Pi(\delta_{lk}=0|N)/4} 
    \leq  \Exz{\phi \mathds{1}_{\eve}} +  \Probz{\eve^c} + \Pi(\mathcal{F}_T^c)\\
    & \quad +\Exz{(1 - \phi)\mathds{1}_{\eve} \mathds{1}_{\int_{\mathcal{F}_T} \mathds{1}_{d_{1T} > M_T' \epsilon_T} e^{L_T(f) - L_T(f_0)} d\Pi(f) >  \Pi(\delta_{lk}=0) m_T(\delta_{lk}= 0)/4}}  \\
    &\leq o(1) 
   +\Exz{(1 - \phi)\mathds{1}_{\eve}\mathds{1}_{\int_{\mathcal{F}_T} \mathds{1}_{d_{1T} > M_T' \epsilon_T} e^{L_T(f) - L_T(f_0)} d\Pi(f) > e^{- \kappa_T' T \epsilon_T^2}/4 }}\\
    &\leq o(1) + 4 e^{  \kappa_T' T \epsilon_T^2}\int_{\mathcal{F}_T}   \Exz{\Exf{\mathds{1}_{\eve}\mathds{1}_{d_{1T} > M_T' \epsilon_T}(1 - \phi)| \mathcal{G}_0} d\Pi(f|\delta_{lk} = 0) }. 
\end{align*}
 In the second inequality, we have notably used \eqref{eq:e_0_phi} $\Exz{\phi \mathds{1}_{\eve}} = o(1)$ from Section \ref{sec:proof_thm_d1t}.
Moreover, from \eqref{eq:e_f_phi}, there exists $\gamma_1>0$ such that 
$$\sum_{i \geq M_T'} \int_{\mathcal{F}_T} \Exf{\mathds{1}_{\Tilde{\Omega}_T} \mathds{1}_{f \in S_i} (1 - \phi)|\mathcal{G}_0} d\Pi(f|\delta_{lk}=0) \leq  4 (2K+1) e^{-\gamma_1  M_T'^2 T \epsilon_T^2},$$
where the $S_i$'s are the slices defined in \eqref{def:slices}. Therefore, 
we obtain \eqref{eq:U} using that
\begin{align*}
    \Probz{ \Pi(d_{1T} > M_T' \epsilon_T | N) >\Pi(\delta_{lk}=0|N)/4} &\leq  o(1) +  4e^{  \kappa_T' T \epsilon_T^2} 4(2K+1) e^{-(M_T')^2 T  \epsilon_T^2} = o(1).
\end{align*}
To prove \eqref{eq:V}, using Markov's inequality and Fubini's theorem, we have, 
for $M'$ large enough, that
\begin{align*}
    &\Probz{ \Pi(d_{1T} \leq M_T' \epsilon_T, \: S_{lk} > M_T\epsilon_T | N) > \Pi(\delta_{lk}=0|N)/4} \\
    & \quad \quad \quad \leq  \Probz{\{m_T(\delta_{lk} = 0) < e^{-\kappa_T' T \epsilon_T^2}\} \cap \eve}  + \Probz{\eve^c} \\
    & \quad \quad \quad +  4e^{  \kappa_T'  T \epsilon_T^2} \Exz{\int_{\mathcal{F}_T \cap \{ S_{lk} > M_T \epsilon_T \}} \mathds{1}_{\eve}\mathds{1}_{d_{1T} \leq M_T' \epsilon_T} e^{L_T(f) - L_T(f_0)} d\Pi(f|\delta_{lk}=0)} \\
    &\quad \quad \quad=  o(1) +4e^{  \kappa_T'  T \epsilon_T^2} \int_{S_{lk} > M_T \epsilon_T} \Exz{\Probf{\eve \cap \{d_{1T} \leq M_T' \epsilon_T\} |\mathcal{G}_0 }} d\Pi(f).
\end{align*}
Moreover, from \eqref{eq:sup_ball_l1}, we have that 
$
\sup_{ f \in A_{L_1}(M_T \epsilon_T)^c \cap \mathcal F_T} \Probf{\eve \cap \{d_{1T} \leq M_T' \epsilon_T \} |\mathcal{G}_0} = o(e^{- \kappa_T' T \epsilon_T^2}).
$
Finally, since $\delta_{lk}^0 = 0$, $ S_{lk} > M_T \epsilon_T $ implies that  $f \in A_{L_1}^c(M_T\epsilon_T)$, which thus leads to \eqref{eq:V}.
Reporting  into \eqref{eq:X}, we now have
\small
\begin{align*}
   \Probz{\hat{\delta}_{lk}^{\Pi,L} = 1} &\leq \Probz{e^{-c_1' T \epsilon_T^2} \Pi(\delta_{lk} = 1, \: S_{lk} \leq M_T \epsilon_T | N) \geq \Pi(\delta_{lk}=0|N)/2} + o(1) \\
   &\leq  \Probz{e^{-c_1' T \epsilon_T^2} \Pi(\delta_{lk} = 1 | N) \geq \Pi(\delta_{lk}=0|N)/2} + o(1) \\
   &=  \Probz{e^{-c_1' T \epsilon_T^2} m_T(\delta_{lk} = 1) \geq \frac{\Pi(\delta_{lk}=0)}{2\Pi(\delta_{lk}=1)} m_T(\delta_{lk} = 0)} + o(1) \\
   &\leq  \Probz{\left \{ e^{-c_1' T \epsilon_T^2} m_T(\delta_{lk} = 1) \geq \frac{\Pi(\delta_{lk}=0)}{2\Pi(\delta_{lk}=1)} e^{- \kappa_T' T \epsilon_T^2} \right \} \cap \eve} + \Probz{m_T(\delta_{lk} = 0) < e^{- \kappa_T' T \epsilon_T^2}} + o(1) \\
      &\leq  \Probz{\left \{m_T(\delta_{lk} = 1) \geq \frac{\Pi(\delta_{lk}=0)}{2\Pi(\delta_{lk}=1)} e^{(c_1'- \kappa_T') T \epsilon_T^2} \right \} \cap \eve} + o(1) \\
      &\leq  \Exz{m_T(\delta_{lk} = 1)} \frac{2\Pi(\delta=1)}{\Pi(\delta_{lk}=0)} e^{-(c_1'-\kappa_T') T \epsilon_T^2} + o(1) \leq \frac{2\Pi(\delta_{lk}=1)^2}{\Pi(\delta_{lk}=0)} e^{-(c_1'-\kappa_T') T \epsilon_T^2} + o(1) = o(1),
\end{align*}
\normalsize
since $c_1' > \kappa_T + c_1 = \kappa_T'$ and that $ \Exz{m_T(\delta = 1)} = \Pi(\delta_{lk} = 1)$ with Fubini's theorem.

\item \textbf{Case 2:} $(l,k) \in I(\delta_0)$, i.e, $\delta_{lk}^0 = 1$. In the case, the computations are slightly simpler since $ \{\delta_{lk} = 0\} \implies f \in A_{L_1}(M\sqrt{\kappa_T}\epsilon_T)^c$ and for $T$ large enough, $S_{lk}^0 - M_T\epsilon_T > 0$. Thus we can use the fact that $\Pi(\delta_{lk} = 0 | N) \leq \Pi( A_{L_1}(M\sqrt{\kappa_T}\epsilon_T)^c | N)$ (the full computations are reported in Section \ref{sec:proof_cor_graph_suite} in the supplementary material \cite{supplementary}.

\end{itemize}




\section{Conclusion}

In this paper we have established concentration and consistency properties of the posterior distribution and of Bayesian estimators of the parameter and connectivity graph, in a general class of nonlinear Hawkes processes. These results validate the common use of these models in different applied contexts. In particular, our results include the commonly used sigmoid and softplus models, as well as the more challenging ReLU model, under some additional restrictions on the parameter space. Moreover, we provide the first theoretical results for estimating an additional parameter of the link functions, in the case of shifted ReLU with unknown shift. To prove those results, we have built a new technique for obtaining model identifiability and concentration inequalities based on the decomposition of the process into excursions, recently introduced by Costa et al. \cite{costa18}. Finally, our results hold under reasonable assumptions on the prior distribution and the true model, and we provide practical examples for which those conditions are verified. 



Although rather weak assumptions have been used to prove our results, it is likely that the latter hold in more general contexts. In particular, we believe that one could relax the condition on processes with bounded memory ($A < + \infty$) since the regenerative properties of the nonlinear Hawkes processes also hold for processes with unbounded memory. One major improvement of our results would be to consider high dimensional processes ($K \to \infty$), possibly in restricted models such as sparse models \cite{bacry2015sparse} or clustering models \cite{raad2020stability}. Another perspective would be to prove the frequentist minimax rate of estimation, since it would be of great interest to evaluate the optimality of Bayesian procedures in nonlinear Hawkes processes. Some practitioners might also be interested in additional results on the estimation of the link function, through a different parametric or even nonparametric form, like in \cite{wang2016isotonic}.

\begin{appendix}

%
%

\section{Main lemmas}\label{app:main_lemmas}

In this section, we state some important lemmas to prove our main results in Section \ref{sec:proof}. The proofs of Lemmas  \ref{lem:main_event}, \ref{lem:KL}, \ref{lem:ef} and  \ref{lem:test_theta} are provided in Sections \ref{sec:proof_thm_d1t}, \ref{sec:technical_lemmas}  and \ref{app:proof_bay_lemmas} in the supplementary material  \cite{supplementary}.  The first two lemmas are controls respectively of  the complement of the main event $\eve$ under the true distribution $\mathbb{P}_0$, and of  the deviations of the  log likelihood ratio $L_T(f_0) - L_T(f)$. 

\begin{lemma}\label{lem:main_event}
Let $Q > 0$. We consider $\Tilde{\Omega}_T$ defined in \eqref{def:Omega} in Section \ref{sec:proof_conc_g}. For any $\beta > 0$, we can choose $C_\beta$ and $c_\beta$ in the definition of $\Tilde{\Omega}_T$ such that
$
	\probz{\Tilde{\Omega}_T^c} \leq T^{-\beta}.
$
Moreover, for any $1 \leq q \leq Q$,
$
\Exz{\mathds{1}_{\eve^c} \max_l \sup \limits_{t \in [0,T]} \left(N^l[t-A,t)\right)^q} \leq 2 T^{-\beta/2}. 
$
Finally, the previous results hold when replacing $\eve$ by $\eve' = \eve \cap \Omega_A$ with $\Omega_A$ defined in Section \ref{sec:proof_conc_f} for the model with shifted ReLU link and unknown shift. 
\end{lemma}
\begin{lemma}\label{lem:KL}
Under the assumptions 
of Theorem \ref{thm:conc_g} or Proposition \ref{prop:relu}, for any  $f \in B_\infty(\epsilon_T)$ and $T$ large enough, we have
\begin{equation*}
    \mathbb{P}_0 \left[L_T(f_0) - L_T(f) \geq \frac{1}{2} \kappa_T T \e_T^2 \right] = o(1).
\end{equation*}
with 
\vspace{-0.8cm}
\begin{align*}
    &\kappa_T =  \begin{cases}
   10 & \text{(under Assumption \ref{ass-psi}(i))} \\
    10 (\log T)  & \text{(under Assumption \ref{ass-psi}(ii) )}\\
   10 (\log T)^2 & \text{(under Case 1 and condition \eqref{condA3})}
    \end{cases}
\end{align*}
\end{lemma}
\begin{remark}
Contrary to the typical approach, the proof of Lemma \ref{lem:KL} is not based on the control of the variance of $L_T(f_0) - L_T(f)$, which is intractable due to the nonlinear form of the log-likelihood function, but on a decomposition of $L_T(f_0) - L_T(f)- KL(f_0,f)$ into a sum of i.i.d.  terms $T_j$ defined as:
\begin{align*}
    T_j := \sum_k\int_{\tau_j}^{\tau_{j+1}}\log \left(\frac{\lambda^k_t(f_0)}{\lambda^k_t(f)} \right) dN_t^k - \int_{\tau_j}^{\tau_{j+1}} (\lambda^k_t(f_0) - \lambda^k_t(f)) dt.
\end{align*}
\end{remark}
The next lemma is a notably used in the proof of Theorem \ref{thm:conc_g} in Section \ref{sec:proof_conc_g} and bridges the gap between the posterior concentration rate in stochastic distance (see Theorem \ref{thm:d1t}) and the rate in $L_1$-distance (Theorem \ref{thm:conc_g}).

\begin{lemma}\label{lem:ef}
For  $f \in \mathcal{F}_T$ and $l \in [K]$, let 
\begin{equation*}
    Z_{1l} = \int_{\tau_1}^{\xi_1} |\lambda^l_t(f) - \lambda^l_t(f_0)|dt, 
\end{equation*}
where $\xi_1$ is defined in \eqref{def:A2} in Section \ref{sec:proof_conc_g}. 
Under the assumptions of Theorem \ref{thm:conc_g} and Case 1 of Proposition \ref{prop:relu}, for $M_T \to \infty$ such that $M_T > M \sqrt{\kappa_T}$ with $M>0$ and for any $f \in \mathcal{F}_T$ such that $\norm{\nu-\nu_0}_1 \leq \max(\norm{\nu_0}_1, \Tilde{C})$ with $\Tilde{C}>0$,
there exists $l \in [K]$ such that on $\Tilde{\Omega}_{T}$,
    \begin{equation*}
        \Exf{Z_{1l}} \geq C(f_0)  \norm{f - f_0}_1, 
    \end{equation*}
with $C(f_0) > 0$ a constant that depends only on $f_0$ and $\phi=(\phi_k)_k$.

Similarly, under the assumptions of  Case 2 of Proposition \ref{prop:relu}, for $f \in \mathcal{F}_T$ and $\theta \in \Theta$, let $r_0 = (r_k^0)_k, \: r_f = (r_k^f)_k$ with $r_k^0 = \phi_k(\nu_k^0) = \theta_k^0 + \nu_k^0, \: r_k^f = \phi_k(\nu_k) = \theta_k + \nu_k, \:  \forall k$. If $\norm{r_f - r_0}_1 \leq \max(\norm{r_0}, \Tilde{C}')$ with $\Tilde{C}'>0$, then there exists $l \in [K]$ such that on $\Tilde{\Omega}_{T}$,
    \begin{equation}\label{eq:ef}
        \Exf{Z_{1l}} \geq C'(f_0) (\|r_f - r_0\|_1 + \norm{h - h_0}_1), \quad C'(f_0) > 0.
    \end{equation}
\end{lemma}

Finally, this last lemma provides upper bounds on type I and type II errors for the tests used in the proof of Case 2 of Proposition \ref{prop:relu} in Section \ref{sec:proof_conc_f} for estimating the parameter of the link functions $\theta_0$.
\begin{lemma}\label{lem:test_theta}
Using the notations of Section \ref{sec:proof_conc_f}, for $\theta_1 \in \bar A(\tilde{M}_T \epsilon_T)^c, \: f_1 \in A_{L_1}(M_T\epsilon_T) \cap \mathcal{F}_T$, we define
\small
\begin{align*}
    \phi(f_1, \theta_1) = \max_{k \in [K]} \min \left(\mathds{1}_{N^k(I_k^0(f_1, \theta_1)) - \Lambda_k^0(I_k^0(f_1, \theta_1)) < - v_T} \vee \mathds{1}_{|\mathcal{E}| < \frac{p_0 T}{2 \Exz{\Delta \tau_1}}}, \mathds{1}_{N^k(I_k^0(f_1 , \theta_1)) - \Lambda_k^0(I_k^0(f_1, \theta_1), f_0) > v_T} \vee \mathds{1}_{|\mathcal{E}| < \frac{p_0 T}{2\Exz{\Delta \tau_1}}}\right),
\end{align*}
\normalsize
with $I_k^0(f_1, \theta_1)$ and $\mathcal{E}$ defined in \eqref{def:I_0_model1} and \eqref{E:model1}, $p_0 = \Probz{j \in \mathcal{E}}$, $\Lambda_k^0(I_0^k(f_1, \theta_1)) = \int_0^T \mathds{1}_{I_k^0(f_1, \theta_1)} \lambda_t^k(f_0, \theta_0)dt$ and $v_T = w_T T \e_T$ with $w_T = 2\sqrt{\max_k \theta_k^0 (\kappa_T + c_1)} + 2x_0$ and $x_0 > 0$. Then  there exists $u_1>2x_0$ such that 
\begin{align*}
\Exz{\phi(f_{1}, \theta_1) \mathds{1}_{\Tilde{\Omega}_T}'} \leq e^{-u_1 T\epsilon_T^2},
\quad
&\sup_{\norm{\theta - \theta_1} + \|f-f_1\|\leq \zeta \epsilon_T } \Exz{\Exf{(1 - \phi(f_1, \theta_1))\mathds{1}_{\Tilde{\Omega}_T'}} \Big| \mathcal{G}_0} = o( e^{-(\kappa_T + c_1) T \epsilon_T^2}).
\end{align*}
\end{lemma}
\end{appendix}



\vspace{-0.5cm}

{\bf Acknowledgements:}
The project leading to this work has received funding from the European Research Council (ERC) under the European Union’s Horizon 2020 research and innovation programme (grant agreement No 834175). The project is also partially funded by the EPSRC via the CDT OxWaSP. The authors would like to thank the Editor and two anonymous referees for valuable  comments and suggestions.

\bibliographystyle{plainnat} 
\bibliography{bib}       

\makeatletter\@input{s_aux.tex}\makeatother

\end{document}


\renewcommand{\thepage}{S\arabic{page}}
\renewcommand{\thesection}{S\arabic{section}}
\renewcommand{\thetable}{S\arabic{table}}
\renewcommand{\thefigure}{S\arabic{figure}}
\renewcommand{\theequation}{S\arabic{section}.\arabic{equation}}
\newcommand{\ds}[1]{\textcolor{black}{#1}}

\hypersetup{hypertexnames=false,citecolor=blue,urlcolor=blue,linkcolor=blue,filecolor=blue,breaklinks=true}

\title{Supplementary material of Bayesian estimation of nonlinear Hawkes processes}

\maketitle

This supplementary material contains additional results and proofs that could not be included in the main paper \cite{main} due to space limitations. In Section \ref{sec:proof_thm_d1t}, we report the proofs of Theorem \ref{thm:d1t} and Lemma  \ref{lem:KL}. Section \ref{sec:supp:graph} contains the proofs of two results in the graph estimation problem (second part of Theorem \ref{cor:graph_estim} and Proposition  \ref{prop:graph_restr}). In Section \ref{sec:proof_cor_post_mean} we prove frequentist results of Corollary~\ref{cor:post_mean}. Results regarding the construction of prior distributions can be found in Section \ref{sec:prior_dist}. In Sections \ref{sec:technical_lemmas}, \ref{sec:lem:KL}  and \ref{app:proof_technical_lemmas}  we report additional technical results and their proofs, notably on the tests used in the main theorems and on the Kullback-Leibler divergence defined for the Hawkes model.  Lemmas \ref{lem:main_event} and \ref{lem:ef} are proved in Section \ref{app:proof_bay_lemmas}. Finally, we report multivariate extensions of existing results on the regenerative properties of the nonlinear Hawkes model in Section \ref{sec:results_costa}.


For the sake of simplicity,  all sections, theorems, corollaries, lemmas and equations presented in the supplement are designed with a prefix S. Regarding the others, we refer to the material of the main text  \cite{main}. This is not specified at each place.


\section{Proofs of Theorem \ref{thm:d1t} and of Lemma \ref{lem:KL} }\label{sec:proof_thm_d1t}

\subsection{Proof of Theorem \ref{thm:d1t}} \label{sec:proof_thm_d1t1}
This section contains the proof of  the posterior concentration rate w.r.t. the stochastic distance defined in \eqref{tilded1} in \cite{main}. We use the well-known strategy of \cite{ghosal:vdv:07} which has the following steps. First, the space of observations is restricted to a subset $\Tilde{\Omega}_T$ defined in \eqref{def:Omega} which has high probability (see Lemma \ref{lem:main_event}). Secondly, we use a lower bound of the denominator $D_T$ defined in \eqref{def:pposterior_dist} using Lemma \ref{lem:KL}. Thirdly, we consider $A_{d_1}(M_T' \epsilon_T) \subset \mathcal{F}$, the ball centered at $f_0$ of radius $M_T' \epsilon_T$ w.r.t the auxiliary stochastic distance $\Tilde{d}_{1T}$. To find an upper bound of the numerator $N_T(A_{d_1}(M_T' \epsilon_T)^c)$ as defined in \eqref{def:pposterior_dist}, $A_{d_1}(M_T' \epsilon_T)^c$ is partitioned into slices $S_i$ on which we can design tests that have exponentially decreasing type I and type II errors (see Lemma \ref{lem:tests}). We then define $\phi$ as the maximum of the tests on the individual slices $S_i$. Note that the following proof applies to all estimation scenarios, and for generality here, we consider $\theta_0$ unknown.

We recall the notation 
$A_{d_1}(\epsilon) = \{f \in \mathcal{F}; \: \Tilde{d}_{1T}(f,f_0) \leq  \epsilon \}.$ and from \eqref{def:pposterior_dist},
$D_T =  \int_{\mathcal{F}} e^{L_T(f) - L_T(f_0)} d\Pi(f).$
For a sequence $\e_T$ verifying the assumptions of Theorem \ref{thm:conc_g} and for $i \geq 1$, we denote
\begin{align}\label{def:slices}
S_i = 
\{f \in \mathcal{F}_T ; \: K i \epsilon_T \leq  \Tilde{d}_{1T}(f,f_0) \leq K(i+1) \epsilon_T\},
\end{align}
where $\mathcal{F}_T = \{f = (\nu, h) \in \mathcal{F}; \: h = (h_{lk})_{l,k} \in \mathcal{H}_T, \: \nu \in \Upsilon_T \}$.
Let  $M_T' = M' \sqrt{\kappa_T}$ with $M' > 0$ and $\kappa_T$ defined in \eqref{kappaT}. Using the  decomposition \eqref{decomposition} with $A = A_{d_1}(M_T' \epsilon_T)^c$ (and $B = \mathcal{F}$) , for any test function $\phi \in [0,1]$, we have
\begin{align}
    \mathbb{E}_0[\Pi(A_{d_1}(M_T' \epsilon_T)^c|N)] &\leq \mathbb{P}_0(\Tilde{\Omega}_{T}^c) + \mathbb{P}_0\left(\{D_T < e^{-\kappa_T  T \epsilon_T^2} \Pi(B_\infty(\epsilon_T))\} \cap \eve\right) + \mathbb{E}_0[\phi \mathds{1}_{\Tilde{\Omega}_{T}}] \nonumber \\
    &+ \frac{e^{\kappa_T T \epsilon_T^2} }{\Pi(B_\infty(\epsilon_T))}\Pi(\mathcal{F}_T^c) + \frac{e^{\kappa_T T \epsilon_T^2} }{\Pi(B_\infty(\epsilon_T))} \left( + \sum_{i=M_T'}^{+\infty} \int_{\mathcal{F}_T} \mathbb{E}_0\left[\Exf{\mathds{1}_{\Tilde{\Omega}_{T}} \mathds{1}_{f \in S_i} (1-\phi)] | \mathcal{G}_0}\right] d\Pi(f)\right) \label{eq:post_e1}.
\end{align}
For the first term on the RHS of \eqref{eq:post_e1}, we have $\mathbb{P}_0(\Tilde{\Omega}_{T}^c) = o(1)$ by Lemma \ref{lem:main_event} in \cite{main}. For the fourth term of the RHS of \eqref{eq:post_e1}, under  \textbf{(A0)} and \textbf{(A1)}, we have that
\begin{equation*}
    \frac{e^{\kappa_T T \epsilon_T^2} }{\Pi(B_\infty(\epsilon_T))} \Pi(\mathcal{F}_T^c) \leq e^{(\kappa_T + c_1) T \epsilon_T^2} (\Pi(\mathcal{H}_T^c) + \Pi(\Upsilon_T^c)) = o(1).
\end{equation*}
The second term of \eqref{eq:post_e1} is controlled  by \eqref{DT} and goes to 0.

 We now deal with the  third and fifth  terms on the RHS of \eqref{eq:post_e1}, which require to define a suitable test function $\phi$. Let $i \in \N, i\geq M_T'$ and $f \in S_i$. On $\Tilde{\Omega}_{T}$, with $A_2(T)$ defined in \eqref{def:A2}, we have that
\begin{align*}
     T \Tilde{d}_{1T}(f, f_0) &= \sum_{l=1}^K \int_{A_2(T)} \left| \lambda_l^k(f)  - \lambda_l^k(f_0) \right| dt = \sum_{l=1}^K \sum_{j=1}^{J_T-1} \int_{\tau_j}^{\xi_j} \left| \lambda_l^k(f)  - \lambda_l^k(f_0) \right| dt \\
     &\geq  \sum_{l=1}^K \sum_{j=1}^{J_T-1} \int_{\tau_j}^{U_j^{(1)}} |r_l^f - r_l^0|dt  \geq \sum_{j=1}^{J_T-1} (U_j^{(1)} - \tau_j)  \sum_l |r_l^f - r_l^0| \geq \frac{T}{2 \norm{r_0}_1 \Exz{\Delta \tau_1}} \sum_l |r_l^f - r_l^0|,
\end{align*}
with $r_f = (\phi_1(\nu_1), \dots, \phi_K(\nu_K)),  \: r_0  = (\phi_1(\nu_1^0), \dots, \phi_K(\nu_K^0)) $ and $\tau_j, \: \xi_j, \: U_j^{(1)}, \: 1 \leq j \leq J_T-1$ defined in Sections~\ref{sec:lem_excursions} and \ref{sec:proof_conc_g} of \cite{main}. Consequently, for any $l \in [K]$, since $\Tilde{d}_{1T}(f, f_0) \leq K(i+1)\epsilon_T$, we obtain that
\begin{align}\label{eq:bound_d1T}
    r_l^f \leq r_l^0 + 2K(i+1) \norm{r_0}_1 \Exz{\Delta \tau_1} \epsilon_T \leq r_l^0 + 1 + 2K \norm{r_0}_1 \Exz{\Delta \tau_1} i \epsilon_T,
\end{align}
for $T$ large enough. Moreover, using Assumption~\ref{ass-psi}, $\phi_l^{-1}$ is $L'$-Lipschitz on $J_l = \phi_l(I_l)$ and $r_l^0 \in J_l$. With $\varepsilon > 0$ from Assumption~\ref{ass-psi}, we now separate the set of indices $i$ in two subsets.

\textbf{Case 1:} $i$ is such that $2 L' \norm{r_0}_1 \Exz{\Delta \tau_1} K (i+1) \epsilon_T < \varepsilon$. Then we have that $r_l^f \in J_l$ and $\nu_l \in I_l$ since $
|r_l^f - r_l^0|  = \leq 2 \norm{r_0}_1 \Exz{\Delta \tau_1} K (i+1) \epsilon_T$. Consequently, $\frac{1}{L'} |\nu_l - \nu_l^0| \leq |r_l^f - r_l^0| \leq L |\nu_l - \nu_l^0|$ and in particular,
\begin{align*}
    \nu_l \leq \nu_l^0 + 2K L' (i+1) \norm{r_0}_1 \Exz{\Delta \tau_1} \epsilon_T .
\end{align*}
Defining
$$\mathcal{F}_{i} = \left \{ f  \in \mathcal{F}_T; \: \nu_l^f  \leq \nu_l^0 + 1 + 2K L' \norm{r_0}_1 \Exz{\Delta \tau_1} i \epsilon_T, \forall l \in [K] \right \}, $$
we therefore have that for any $f \in S_i$ and $T$ large enough, $f \in \mathcal{F}_{i}$
Let $(f_{i,n})_{n=1}^{\mathcal{N}_i}$ be the centering points of a minimal $L_1$-covering of $\mathcal{F}_i$ by $\mathcal{N}_i$ balls of radius $\zeta i \epsilon_T$ with $\zeta = 1/(6N_0)$, and $N_0$ defined in the proof of Lemma \ref{lem:tests} in Section \ref{app:proof_test}.  There exists $C_0 > 0$ such that we have
\begin{align*}
    \mathcal{N}_i \leq \left(\frac{C_0(1+i\epsilon_T)}{\zeta i \epsilon_T/2}\right)^K   \mathcal{N}(\zeta i \epsilon_T/2, \mathcal{H}_T, \norm{.}_1) .
\end{align*}
If $i \epsilon_T \leq 1$,
\begin{align*}
    \mathcal{N}_i \leq \left(\frac{4 C_0}{\zeta i \epsilon_T}\right)^K   \mathcal{N}(\zeta i \epsilon_T/2, \mathcal{H}_T, \norm{.}_1) = \left(\frac{4 C_0}{\zeta}\right)^K e^{-K \log(i\epsilon_T)}   \mathcal{N}(\zeta i \epsilon_T/2, \mathcal{H}_T, \norm{.}_1).
\end{align*}
Otherwise, if $i\epsilon_T \geq 1$,
$$\mathcal{N}_i \leq \left( \frac{4 C_0}{\zeta}\right)^K   \mathcal{N}(\zeta i \epsilon_T/2, \mathcal{H}_T, \norm{.}_1).
$$


Moreover, since $i \mapsto  \mathcal{N}(\zeta i \epsilon_T/2, \mathcal{H}_T, \norm{.}_1)$ is non-increasing, and if $i \geq 2 \zeta_0/\zeta$, we have that $\mathcal{N}(\zeta i \epsilon_T/2, \mathcal{H}_T \norm{.}_1) \leq \mathcal{N}(\zeta_0 \epsilon_T, \mathcal{H}_T, \norm{.}_1) \leq e^{x_0 T \epsilon_T^2}$ using \textbf{(A2)}. Consequently, since $\epsilon_T > \epsilon_T^2 > \frac{1}{T}$ when $T$ is large enough, $e^{-\log (i \epsilon_T)} \leq e^{\log(\frac{\zeta}{2\zeta_0} T)}$ and we obtain
\begin{align*}
    \mathcal{N}_i &\leq \left( \frac{4 C_0}{\zeta}\right)^K   \left(\frac{\zeta}{2\zeta_0}\right)^K e^{K \log T}    \mathcal{N}(\zeta i \epsilon_T/2, \mathcal{H}_T, \norm{.}_1) =  \left( \frac{2 C_0}{\zeta_0}\right)^K   e^{K \log T}    \mathcal{N}(\zeta i \epsilon_T/2, \mathcal{H}_T, \norm{.}_1) \\ &\leq  C_K e^{K \log T}   e^{x_0 T \epsilon_T^2},
\end{align*}
denoting $C_K =  \left( \frac{2 C_0}{\zeta_0}\right)^K$.

\textbf{Case 2:} $2 L' \norm{r_0}_1 \Exz{\Delta \tau_1} K (i+1) \epsilon_T > \varepsilon$. Then in this case we define $\mathcal{F}_i = \mathcal{F}_T$ and $\nu_T = e^{c_2 T \epsilon_T^2}$, and the $L_1$-covering number of $\mathcal{F}_i$ is now upper bounded by
\begin{align*}
 \mathcal{N}_i \leq \left(\frac{ \nu_T }{\zeta i \epsilon_T/2}\right)^K   \mathcal{N}(\zeta i \epsilon_T/2, \mathcal{H}_T, \norm{.}_1) \leq C_0' e^{(x_0+c_2 K) T \epsilon_T^2},
\end{align*}
with $C_0' > 0$ a constant.

In both cases, considering the tests $\phi_{i} = \max \limits_{n \in [\mathcal{N}_i]} \phi_{f_{i,n}}$ with $\phi_{f_{i,n}}$, $\gamma_1 = \min_l x_{1l}$ defined in Lemma \ref{lem:tests}, and $C_K' = C_K \vee C_0'$, $x_0' = x_0 + c_ K$, we have
\begin{align*}
    &\mathbb{E}_0[\mathds{1}_{\Tilde{\Omega}_{T}}\phi_i] \leq \mathcal{N}_i  e^{-\gamma_1 T (i^2 \epsilon_T^2 \wedge i \epsilon_T)} \leq  C_K'  (2K + 1) e^{K \log T}   e^{x_0' T \epsilon_T^2} e^{-\gamma_1 T (i^2 \epsilon_T^2 \wedge i \epsilon_T)}, \\
    &\Exz{\Exf{\mathds{1}_{\Tilde{\Omega}_T} \mathds{1}_{f \in S_i} (1 - \phi_{i})|\mathcal{G}_0}}  \leq (2K + 1) e^{-\gamma_1 T (i^2 \epsilon_T^2 \wedge i \epsilon_T)}.
\end{align*}
Choosing $\phi = \max \limits_{M_T' \leq i \leq \mathcal{N}_i} \phi_i$ and since $M_T' \geq 2 \zeta_0/\zeta$ for $T$ large enough, we obtain
\begin{align}\label{eq:e_0_phi}
     \mathbb{E}_0[\mathds{1}_{\Tilde{\Omega}_{T}}\phi] &\leq  C_K' (2K+1) e^{K \log T}   e^{x_0' T \epsilon_T^2} \left[ \sum_{i=M_T'}^{\epsilon_T^{-1}}e^{-\gamma_1 i^2 T \epsilon_T^2 } + \sum_{i > \epsilon_T^{-1}} e^{-\gamma_1   i T \epsilon_T} \right] \nonumber \\
     &\leq  C_K' (2K+1) e^{K \log T}   e^{x_0' T \epsilon_T^2} \left[ \sum_{i=M_T'}^{\epsilon_T^{-1}}e^{-\gamma_1  i M_T' T \epsilon_T^2 } + \sum_{i > \epsilon_T^{-1}} e^{-\gamma_1 T  i \epsilon_T} \right] \nonumber \\
     &\leq  C_K'  (2K+1) e^{K \log T}   e^{x_0' T \epsilon_T^2} \left[ 2 e^{-\gamma_1 M_T'^2 T \epsilon_T^2 } + 2 e^{-\gamma_1 T} \right] \nonumber \\
      &\leq 4 C_K'  (2K+1) [e^{-\gamma_1  M_T'^2 T \epsilon_T^2} + e^{-\gamma_1T}],
\end{align}
since $\log^3 T = O(T \epsilon_T^2)$ by assumption. Therefore, we arrive at $ \mathbb{E}_0[\mathds{1}_{\Tilde{\Omega}_{T}}\phi] = o(1)$. Similarly, we can obtain 
\begin{align*}
    \Exz{\sum_{i \geq M_T'} \int_{\mathcal{F}_T} \Exf{\mathds{1}_{\Tilde{\Omega}_T} \mathds{1}_{f \in S_i} (1 - \phi)|\mathcal{G}_0} d\Pi(f)} &\leq (2K + 1) \left[ \sum_{i=M_T'}^{\epsilon_T^{-1}}e^{-\gamma_1 i^2 T \epsilon_T^2 } + \sum_{i > \epsilon_T^{-1}} e^{-\gamma_1 T  i \epsilon_T} \right] \nonumber \\
    &\leq  4 (2K+1)[e^{-\gamma_1 M_T'^2 T \epsilon_T^2} + e^{-\gamma_1T}].
\end{align*}

Therefore, using \textbf{(A0)}, we have for the second term in \eqref{eq:post_e1},
\small
\begin{equation} \label{eq:e_f_phi}
\begin{split}
    \frac{e^{\kappa_T T \epsilon_T^2} }{\Pi(B_\infty(\epsilon_T))} & \left( \sum_{i=M_T'}^{+\infty} \int_{\mathcal{F}_T} \mathbb{E}_0\left[\Exf{\mathds{1}_{\Tilde{\Omega}_{T}} \mathds{1}_{f \in S_i} (1-\phi) | \mathcal{G}_0}\right] d\Pi(f)\right) \leq  \frac{e^{\kappa_T T \epsilon_T^2} }{e^{-c_1 T \epsilon_T^2}} 4 (2K+1) [e^{-\gamma_1  M_T'^2 T \epsilon_T^2} + e^{-\gamma_1T}] \\
    &\leq 4 (2K+1)e^{-\gamma_1  M_T'^2 T \epsilon_T^2/2}  = o(1),
\end{split}
\end{equation}
\normalsize
for $M_T' > \sqrt{c_1 + \kappa_T}$, which holds true if $M_T' = M' \sqrt{\kappa_T}$ with $M'$ large enough. Aggregating the upper bounds previously obtained, we can finally conclude that
\begin{align*}
     \mathbb{E}_0[\Pi(A_{d_1}(M_T' \epsilon_T)^c|N)] \leq \mathbb{P}_0(\Tilde{\Omega}_{T}^c) + o(1) = o(1),
\end{align*}
which terminates the proof of Theorem \ref{thm:d1t}.

\subsection{Proof of Lemma \ref{lem:KL}} \label{sec:prlem:KL3}

\ds{In this section, we prove a control on the log-likelihood ratio of the form
$\Probz{ L_T(f_0) - L_T(f)  \geq 5 z_T } = o(1) $, where  $z_T= T \epsilon_T^2 (\log T)^r$ where $r = 0,1,2$ is defined in Lemma \ref{lem:t_j} and depends on the assumptions on the link function.} We have
\begin{align*}
L_T(f_0) - L_T(f)  &= \sum_k \int_{0}^{T} \log \left(\frac{\lambda^k_t(f_0)}{\lambda^k_t(f)} \right) dN_t^k - \int_{0}^{T} (\lambda^k_t(f_0) - \lambda^k_t(f)) dt\\
&= W_0 + \sum_{j=1}^{J_T-1} T_j +  W_T,
\end{align*}
with
\begin{align}
& W_0 := \sum_k\int_{0}^{\tau_1}\log \left(\frac{\lambda^k_t(f_0)}{\lambda^k_t(f)} \right) dN_t^k - \int_0^{\tau_1}(\lambda^k_t(f_0) - \lambda^k_t(f)) dt, \nonumber \\
& W_T := \sum_k\int_{\tau_{J_T}}^{T}\log \left(\frac{\lambda^k_t(f_0)}{\lambda^k_t(f)} \right) dN_t^k - \int_{\tau_{J_T}}^{T}(\lambda^k_t(f_0) - \lambda^k_t(f)) dt. \nonumber
\end{align}
Let $\mathcal{L}_T = L_T(f_0 ) - L_T(f) - \Exz{L_T(f_0 ) - L_T(f)} = L_T(f_0 ) - L_T(f) - KL(f_0,f)$, with $KL(f_0,f)$ the Kullback-Leibler divergence defined in \eqref{eq:def_KL_brute}. Then 
\begin{equation}\label{borne-p0}
\begin{split}
&\Probz{\mathcal{L}_T \geq  4z_T} = \Probz{\sum_{j=1}^{J_T-1} T_j + W_0+W_T - KL(f_0,f) \geq 4 z_T }  \\
&= \Probz{\sum_{j=1}^{J_T-1} (T_j  - \Exz{T_j}) +  \sum_{j=1}^{J_T-1} \Exz{T_j} - \Exz{\sum_{j=1}^{J_T-1} T_j} +W_T - \Exz{W_T} + W_0 - \Exz{W_0} \geq 4z_T } \\ 
&\leq \Probz{\sum_{j=1}^{J_T-1} T_j  - \Exz{T_j} \geq  z_T} + \Probz{  (J_T - \Exz{J_T}) \Exz{T_1} - \Exz{\sum_{j=0}^{J_T-1} T_j - \Exz{T_j}}\geq z_T } + \Probz{ W_T - \Exz{W_T} \geq z_T } \\
&+ \Probz{ W_0 - \Exz{W_0} \geq z_T },
\end{split}
\end{equation}
using equation \eqref{eq:def_KL_dev} and that
\begin{align*}
    KL(f_0,f) &= \underbrace{\sum_k \Exz{\int_{\tau_{0}}^{\tau_1}\log \left(\frac{\lambda^k_t(f_0)}{\lambda^k_t(f)} \right) dN_t^k - \int_{0}^{\tau_1}(\lambda^k_t(f_0) - \lambda^k_t(f)) dt}}_{\Exz{W_0}} \\
    &+ \underbrace{\sum_k \Exz{\int_0^{\tau_{J_T}}\log \left(\frac{\lambda^k_t(f_0)}{\lambda^k_t(f)} \right) dN_t^k - \int_0^{\tau_{J_T}}(\lambda^k_t(f_0) - \lambda^k_t(f)) dt}}_{= \Exz{ \sum_{j=1}^{J_T-1} T_j}} \\
    &+ \underbrace{\sum_k \Exz{\int_{\tau_{J_T}}^{T}\log \left(\frac{\lambda^k_t(f_0)}{\lambda^k_t(f)} \right) dN_t^k - \int_{\tau_{J_T}}^{T}(\lambda^k_t(f_0) - \lambda^k_t(f)) dt}}_{\Exz{W_T}}.
\end{align*}
From Lemma \ref{lem:t_j}, we have that $ \Probz{\sum_{j=1}^{J_T-1} T_j  - \Exz{T_j} \geq  z_T} = o(1)$. We now deal with the second term on the RHS of \eqref{borne-p0}. Using Lemma \ref{lem:t_j}, we have
\begin{align*}
    \Exz{  \sum_{j=1}^{J_T-1} T_j - \Exz{T_j} } 
    &=\Exz{\sum_{j=\lfloor T/\Exz{\Delta \tau_1}\rfloor}^{J_T-1} T_j - \Exz{T_j}}  \\
    &\leq \Exz{\sum_{J \in \mathcal{J}_T} \mathds{1}_{J_T = J}\left( \sum_{j=\lfloor T/\Exz{\Delta \tau_1}\rfloor}^{J-1} |T_j - \Exz{T_j}|\right)} + \sqrt{\Probz{J_T \notin \mathcal{J}_T}} \sqrt{T^2\Exz{T_1^2}}  \\
    &\leq \Exz{ \sum_{j=\lfloor\frac{T}{\Exz{\Delta \tau_1}} (1 - c_\beta \sqrt{\frac{\log T}{T}})\rfloor}^{\lfloor\frac{T}{\Exz{\Delta \tau_1}} (1 + c_\beta \sqrt{\frac{\log T}{T}})\rfloor} |T_j - \Exz{T_j}|} + T^{1-\beta/2} \sqrt{\Exz{T_1^2}} \\
    &\leq  \frac{2 c_\beta}{\Exz{\Delta \tau_1}}\Exz{|T_1 - \Exz{T_j}|} \sqrt{T \log T } + T^{1-\beta/2} \sqrt{\Exz{T_1^2}} \\
    &\lesssim \sqrt{\Exz{T_1^2}} \sqrt{T \log T } \lesssim \sqrt{T} (\log T)^{3/2} \epsilon_T = o(z_T),
\end{align*}
since $\log^3 T = O(z_T)$ by assumption. Consequently,
\begin{align*}
     \Probz{  (J_T - \Exz{J_T}) \Exz{T_1} - \Exz{\sum_{j=0}^{J_T-1} T_j - \Exz{T_j}}\geq z_T } &\leq  \Probz{  J_T - \Exz{J_T} \geq \frac{z_T}{2 \Exz{T_1}} } \\
      &\leq \Probz{  J_T - \frac{T}{\Exz{\Delta \tau_1}} \geq \frac{z_T}{4 \Exz{T_1}}},
\end{align*}
using that $J_T - \Exz{J_T} = J_T - \frac{T}{\Exz{\Delta \tau_1}} + \frac{T}{\Exz{\Delta \tau_1}} - \Exz{J_T}$ and $\frac{T}{\Exz{\Delta \tau_1}} - \Exz{J_T} \leq \frac{z_T}{4 \Exz{T_1}}$ for $T$ large enough. Consequently, since $\Exz{T_1} \leq \sqrt{\frac{z_T}{T}}$, we have with $\eta_T = \sqrt{\frac{ z_T}{4 \Exz{T_1}}}$,
and using the computations as for the proof of Lemma~\ref{lem:main_event},
\begin{align*}
    \Probz{  J_T - \frac{T}{\Exz{\Delta \tau_1}} \geq \eta_T} &\leq \Probz{  \tau_{\lfloor T/\Exz{\Delta \tau_1} + \eta_T\rfloor} \leq T} \\
    &= \Probz{ \sum_{j=1}^{\lfloor T/\Exz{\Delta \tau_1} + \eta_T\rfloor} B_j \leq T - \lfloor T/\Exz{\Delta \tau_1} + \eta_T\rfloor \Exz{\Delta \tau_1}} \\
    &\leq \Probz{ \sum_{j=1}^{\lfloor T/\Exz{\Delta \tau_1} + \eta_T\rfloor} B_j \leq -  \Exz{\Delta \tau_1} \eta_T + \Exz{\Delta \tau_1}} \\
    &\leq \frac{4 \lfloor T/\Exz{\Delta \tau_1} + \eta_T\rfloor \Exz{\Delta \tau_1^2}}{\Exz{\Delta \tau_1}^2 \eta_T^2} \lesssim \frac{T}{\eta_T^2} + \frac{1}{\eta_T} \lesssim \frac{1}{z_T} = o(1).
\end{align*}
For the third term on the RHS of \eqref{borne-p0}, applying Bienayme-Chebyshev's inequality, we have
\begin{align}\label{eq:bienayme}
   &\Probz{W_T -  \Exz{W_T} \geq z_T  } \leq \frac{\Exz{W_T^2}}{z_T^2}.
\end{align}
Using similarly computations as in Lemma \ref{lem:t_j}, we obtain
\begin{align*}\label{control-variance}
     \Exz{W_T^2} &=\Exz{\left(\sum_k \int_{\tau_{J_T}}^{T}\log \left(\frac{\lambda^k_t(f_0)}{\lambda^k_t(f)} \right) dN_t^k - \int_{\tau_{J_T}}^{T}(\lambda^k_t(f_0) - \lambda^k_t(f)) dt \right)^2} \nonumber\\
      &\lesssim  \Exz{ (T - \tau_{J_T}) \int_{\tau_{J_T}}^{T} \left[ \log \left(\frac{\lambda^k_t(f_0)}{\lambda^k_t(f)} \right)  \lambda_t^k(f_0) -  (\lambda^k_t(f_0) - \lambda^k_t(f))\right]^2 dt } +  \Exz{\int_{\tau_{J_T}}^{T}\log^2  \left(\frac{\lambda^k_t(f_0)}{\lambda^k_t(f)} \right) \lambda_t^k(f_0) dt}.
\end{align*}
Then since
\begin{align*}
 &\Exz{ (T - \tau_{J_T}) \int_{\tau_{J_T}}^{T} \left[ \log \left(\frac{\lambda^k_t(f_0)}{\lambda^k_t(f)} \right)  \lambda_t^k(f_0) -  (\lambda^k_t(f_0) - \lambda^k_t(f))\right]^2 dt } \leq  \Exz{ \Delta \tau_1 \int_{\tau_{1}}^{\tau_2} \chi \left(\frac{\lambda^k_t(f_0)}{\lambda^k_t(f)} \right)^2  \lambda_t^k(f_0)^2 dt }, \\
    &\Exz{\int_{\tau_{J_T}}^{T}\log^2  \left(\frac{\lambda^k_t(f_0)}{\lambda^k_t(f)} \right) \lambda_t^k(f_0) dt} \leq \Exz{\int_{\tau_{1}}^{\tau_2}\log^2  \left(\frac{\lambda^k_t(f_0)}{\lambda^k_t(f)} \right) \lambda_t^k(f_0) dt},
\end{align*}
we can use the bounds derived for $\Exz{T_j^2}$ in Lemma \ref{lem:t_j}.

We finally obtain
\begin{align*}
    \Probz{W_T -  \Exz{W_T} \geq z_T  } \lesssim \frac{ (\log^2 T)\e_T^2}{z_T^2} \lesssim \frac{ \log^2 T }{T^2 \e_T^2} = o(1).
\end{align*}
With similar computations, we also obtain that $ \Probz{W_0 -  \Exz{W_0} \geq z_T  } = o(1)$. Consequently, reporting into \eqref{borne-p0} and using Lemma \ref{lem:KLdecomp}, we finally obtain that
\begin{align*}
    \Probz{L_T(f_0) - L_T(f) > 5z_T} \leq \Probz{\mathcal{L}_T > 5z_T - u_T} \leq \Probz{\mathcal{L}_T > 4z_T}  = o(1),
\end{align*}
since $KL(f_0,f) \leq u_T \leq z_T$ using Lemmas \ref{lem:KLdecomp} and \ref{lem:t_j}.

\section{Proof of Theorem \ref{cor:graph_estim} and Proposition \ref{prop:graph_restr}}\label{sec:supp:graph}

\subsection{Proof of Theorem \ref{cor:graph_estim} (Case 2)}\label{sec:proof_cor_graph_suite}
In this section, we prove the second case of the proof of Theorem \ref{cor:graph_estim} in Section \ref{sec:proof_cor_graph} of \cite{main}. We recall that in this case we consider $(l,k) \in I(\delta_0)$, i.e., $\delta_{lk}^0 = 1$. We also recall the notation $S_{lk}^0 = \|h_{lk}^0\|_1$ and $M_T = M \sqrt{\kappa_T}$ with $M > 0$.

We first note that if $S_{lk}^0 > M_1 \sqrt{\kappa_T} \epsilon_T$ with $M_1 >M$ and $ 1 - F(S_{lk}^0/2) \geq 2e^{-\gamma T\epsilon_T^2}$ for some $\gamma > \kappa_T + c_1 =: \kappa_T'$, then if $\delta_{lk} = 0 $, $ f \in A_{L_1}(M_1\sqrt{\kappa_T}\epsilon_T)^c$ and
$$
\Pi(\delta_{lk} = 0 | N) \leq \Pi( A_{L_1}(M_T\epsilon_T)^c | N) , \quad \text{ and } \quad S_{lk}^0 - M_T\epsilon_T \geq S_{lk}^0 /2 .
$$
Therefore, since $F$ is non-increasing, $F(S_{lk}^0 - M_T\epsilon_T) \leq F(S_{lk}^0/2)$ and 
\begin{align*}
  & \Probz{\hat \delta_{lk}^{\Pi, L} = 0} \leq  \Probz{\Pi((1-F(S_{lk}))\mathds{1}_{\delta=1}(\mathds{1}_{S_{lk} \geq S_{lk}^0 - M_T\epsilon_T} + \mathds{1}_{S_{lk} < S_{lk}^0 - M_T\epsilon_T}) | N) \leq \Pi(A_{L_1}(2M_T\epsilon_T)^c|N) }\\
   &\leq \Probz{(1 - F(S_{lk}^0 /2)) \Pi(S_{lk} > S_{lk}^0 - M_T\epsilon_T| N) + \Pi((1-F(S_{lk}))\mathds{1}_{S_{lk} < S_{lk}^0 - M_T\epsilon_T}) | N) \leq \Pi(A_{L_1}(M_1\sqrt{\kappa_T}\epsilon_T)^c|N) }\\
  &\leq  \Probz{2e^{- \gamma T \epsilon_T^2} \Pi( S_{lk} > S_{lk}^0 - M_T\epsilon_T | N) \leq \Pi(A_{L_1}( M_1\sqrt{\kappa_T}\epsilon_T)^c|N) }\\
   &\leq \Probz{\Pi(S_{lk} > S_{lk}^0 - M_T \epsilon_T | N) \leq 1/2 } 
   +  \Probz{\eve \cap \left\{ e^{- \gamma T \epsilon_T^2} \leq \Pi(A_{L_1}( M_1\sqrt{\kappa_T}\epsilon_T)^c|N) \right \}} + \Probz{\eve^c}.
 \end{align*}

 Similar to the first case where $\delta_{lk}^0 = 0$, we have that   $\Probz{\eve \cap \left\{e^{- \kappa_T' \epsilon_T^2} \leq \Pi(A_{L_1}(M_T\epsilon_T)^c|N) \right\}} = o(1)$, and since $\gamma \geq \kappa_T'$, 

 \begin{align*}
     \Probz{\hat \delta_{lk}^{\Pi,L} = 0} &\leq \Probz{\Pi(S_{lk} > S_{lk}^0 - M_T \epsilon_T | N) \leq 1/2} + o(1) = \Probz{\Pi(S_{lk} < S_{lk}^0 - M_T \epsilon_T | N) >1/2 } +o(1)\\
     &\leq \Probz{\eve \cap \{\Pi(A_{L_1}(M_1\sqrt{\kappa_T}\epsilon_T)^c|N) >1/2\}} + \Probz{\eve^c} = o(1),
 \end{align*}
which terminates this proof.

\subsection{Proof of Proposition \ref{prop:graph_restr}}\label{sec:proof_prop_restr}

In this section, we prove our posterior consistency result on the posterior distribution in the restricted models, the \textbf{All equal model} and \textbf{Receiver dependent  model}, defined  in Section \ref{sec:post_consistency} of \cite{main}.

In the \textbf{All equal model}, if $I(\delta_0) \neq \emptyset $ then $ \exists (l_1, k_1) \in [K]^2, \delta_{l_1 k_1}^0 = 1$, and $h_0\neq 0$.  Consequently, for $T$ large enough,
\begin{align*}
\{f \in \mathcal{F}; \: \delta_{l_1 k_1} \neq \delta_{l_1 k_1}^0 \} &= \left\{f \in \mathcal{F};  \:  \delta_{l_1 k_1} = 0\right\} \subset \left\{f \in \mathcal{F};  \:  \|h^0_{l_1 k_1} - h_{l_1 k_1}\|_1 = \norm{h_0}_1\right\} \subset A_{L_1}(M_T \epsilon_T)^c,
\end{align*}
leading to $\Exz{\Pi(\delta_{l_1 k_1} \neq \delta_{l_1 k_1}^0|N)} = o(1)$ using Theorem \ref{thm:conc_g}. This would hold for the same resasons for any $(l,k) \in I(\delta_0)$. For $(l,k) \notin I(\delta_0)$, we have instead that for $T$ large enough,
\begin{align*}
\{f \in \mathcal{F};  \:  \delta_{l k} \neq \delta_{l k}^0 \} &= \{f \in \mathcal{F}; \: \delta_{l k} = 1\} \subset \{f \in \mathcal{F}; \: \norm{h^0_{l k} - h_{l k}}_1 = \norm{h}_1\} \\
 &\subset \{f \in \mathcal{F}; \: \norm{h}_1 + \norm{h^0_{l_1 k_1} - h_{l_1 k_1}}_1 \geq \norm{h_0}_1\}  \subset A_{L_1}(M_T \epsilon_T)^c,
\end{align*}
as soon as $\norm{h_0}_1 \geq 3M_T \epsilon_T$, since $\norm{h}_1 + \norm{h^0_{l_1 k_1} - h_{l_1 k_1}}_1 \geq \norm{h}_1 + \norm{h_0}_1 \wedge \norm{h - h^0}_1  \geq (\norm{h}_1 + \norm{h_0}_1) \wedge (\norm{h}_1 + \norm{h-h_0}_1) \geq \norm{h_0}$. Similarly to the proof of Theorem \ref{thm:post_graph} in Section \ref{sec:proof_post_graph}, we then obtain $\Exz{\Pi(\delta \neq \delta_0 | N)} = o(1)$.

If  $I(\delta_0) = \emptyset $, then $\forall (l,k) \in [K]^2, \delta_{l k}^0 = 0$,  and $h_0 =0$, and in this case we first show that there exists $\cste>0 $ such that
\begin{equation} \label{eq:D_T_Poisson}
\Probz{\left \{D_T < \cste T^{-K/2}  \right \} \cap \eve} = o(1).
\end{equation}
Since $h_0=0$, the log-likelihood function is the one off $K$ independent homogeneous Poisson PP with parameter $r_0$, i.e.,
\begin{align*}
L_T(f_0) = L_T(r_0) = \sum_k \log(r_k^0)N^k[0,T) - r_k^0T,
\end{align*}
with $r_k^0 = \phi_k(\nu_k^0)$. Let $\bar{A} = \{f \in \mathcal{F}_T; \: h = 0\}$. For any $f \in \bar{A}$, we also have $L_T(f) = L_T(r_f) = \sum_k \log(r_k^f)N^k[0,T) - r_k^fT$ and the model is also a simple Poisson PP,  which is a regular model, and which parameter is $\phi(\nu)$. 
Therefore, we have
\begin{align*}
L_T(r) - L_T(r_0) &= \sum_k \log(\frac{r_k}{r_k^0})N^k[0,T) - (r_k^f- r_k^0)T \\
&= \sum_k \left[\frac{r_k^f- r_k^0}{r_k^0} - \frac{1}{2}\left(\frac{r_k^f- r_k^0}{r_k^0} \right)^2 + O_{\mathbb{P}_0}(r_k^f- r_k^0)^3\right]N^k[0,T) - (r_k^f- r_k^0)T \\
&= \sum_k \left(\frac{N^k[0,T)}{r_k^0} - T\right)(r_k^f- r_k^0) - \frac{N^k[0,T)}{2}\left(\frac{r_k^f- r_k^0}{r_k^0} \right)^2 + O_{\mathbb{P}_0}(T(r_k^f- r_k^0)^3).
\end{align*}
\ds{Also, let $\tilde \pi_r$ be the prior density of $r_k^f= \phi_k(\nu_k)$ given by $\tilde\pi_r(x) = \phi(\nu) \pi_\nu(\nu).$}
\ds{Note that in the case of partially known link functions of the form $\phi_k(x) = \theta_k + \psi(x)$, the parameter of the Poisson PP is now $(\nu, \theta)$ and we can consider a marginal prior density of $r_k^f= \theta_k + \psi(\nu_k)$ given by 
$$ \tilde\pi_r(x) = \int_0^{\psi^{-1}(x)} \pi_\theta(x-\psi(\nu)) \pi_\nu(\nu)d\nu.$$ The regularity assumptions on $\pi_\nu$ (and $\pi_\theta$) and $\phi^{-1}$ imply that $\tilde \pi_r$ is continuous and positive at $r_k^0$ for all~$k$.}

Defining $\bar{A}_T = \bar{A}\cap\{\|r_f -r_0\|_1 \leq \epsilon\}$ for $\epsilon>0$ small enough, we thus have
\begin{align*}
D_T &= \int_{\mathcal{F}_T} e^{L_T(f) - L_T(f_0)} d\Pi(f) \geq \int_{\bar{A}_T} e^{L_T(r) - L_T(r_0)} d\Pi(f) \\
&\geq  \int_{\bar{A}_T} \prod_{k=1}^K \exp \left \{ \left(\frac{N^k[0,T)}{r_k^0} - T\right)(r_k^f- r_k^0) - \frac{N^k[0,T)}{2}\left(\frac{r_k^f- r_k^0}{r_k^0} \right)^2(1 +\epsilon) \right\}\tilde \pi(r_k)d r_k\\
&=  \prod_{k=1}^K \tilde \pi_r(r_k^0)(1+o_{\mathbb{P}_0}(1))e^{\frac{r_k^0}{2(1+ \epsilon)N^k[0,T)} \left(\frac{N^k[0,T)}{ r_k^0} - T\right)^2} \times \\
& \quad \int_{|r_k^f-r_k^0|\leq \epsilon/K}\exp \left \{- \frac{N^k[0,T)}{2 (r_k^0)^2}(1 - \epsilon) \left(r_k^f- r_k^0  - \frac{(r_k^0)^2}{(1 + \epsilon)N^k[0,T)} \left(\frac{N^k[0,T)}{ r_k^0} - T\right)\right)^2  \right\}d r_k^f\\
&\geq  \prod_{k=1}^K \tilde \pi_r(r_k^0)r_k^0 \frac{ \sqrt{2\pi} }{[N^k[0,T)(1 + \epsilon)]^{1/2}}  (1 + o_{\mathbb{P}_0}(1))  \geq \prod_{k=1}^K \frac{ \sqrt{2\pi } \tilde \pi_r(r_k^0)r_k^0  }{[T(1 + \epsilon)]^{1/2}}  (1 + o_{\mathbb{P}_0}(1)) ,
\end{align*}
since $N^k[0,T)$ is a Poisson random variable with parameter $r_k^0 T$ so that $|N^k[0,T)/T - r_k^0|\leq M_T/\sqrt{T}  $ with probability going to $1$ and $\{|r_k^f -r_k^0|\leq \epsilon/K\}$ contains the set 
$$ \left| r_k^f- r_k^0  - \frac{(r_k^0)^2}{(1 - \epsilon)N^k[0,T)} \left(\frac{N^k[0,T)}{ r_k^0} - T\right)\right| \leq \frac{ \epsilon }{ 2 K}, $$
for $T$ large enough.
Therefore we obtain \eqref{eq:D_T_Poisson} and deduce that $\epsilon_T \lesssim \sqrt{\log T/T}$ using the same arguments as in the proofs of Theorems  \ref{thm:d1t} and \ref{thm:conc_g}. As in Theorem \ref{thm:conc_g}, it is thus sufficient that
\begin{align*}
&\Pi(\{0 < \norm{h}_1 \leq M\sqrt{\log T/T} \} \cap \{ \max_k|r_k^f- r_k^0| \leq M\sqrt{\log T/T}\}) \\
&\leq \Pi(\{0 < \norm{h}_1 \leq M\sqrt{\log T/T} \} \cap \{ \max_k|\nu_k - \nu_k^0| \leq \frac{M}{L} \sqrt{\log T/T}\}) = o(T^{-K/2}),
\end{align*}
for $M$ large enough which boils down to assuming that 
\begin{align*}
\Pi(\{0 < \norm{h}_1 \leq M\sqrt{\log T/T} \} ) = o((\log T)^{-K/2}),
\end{align*}
to conclude that $\Exz{\Pi \left(\delta \neq \delta_0 |N \right)} = o(1)$.

In the \textbf{Receiver node dependent model}, i.e., $\forall (l,k) \in [K]^2, h_{lk} = \delta_{lk}h_{k}$, we obtain the  result similarly to  the \textbf{All equal model} since the likelihood is also a product of likelihoods per node: 
$$L_T(f)  = \sum_{k=1}^KL_T(\nu_k, h_k, \delta{(k)}, \theta_k), \quad \text{ with } \delta{(k)} := (\delta_{lk}, 1\leq l\leq K).$$ 
If the priors on $(\theta_k, \nu_k, h_k , \delta(k))$ are independent, the posteriors are also independent and we can directly apply the previous result.


\section{Proof of Corollary \ref{cor:post_mean}}\label{sec:proof_cor_post_mean}

In this section we prove our result on the  convergence rate of the posterior mean estimator. \ds{ In all the considered models with known link functions, the convergence of the posterior mean $\hat f = (\hat{\nu}, \hat{h})$ results from the same arguments as in Corollary 1 of \cite{donnet18:supplement} (proof in Section 2.3 in the supplementary material). In the case of the shifted ReLU model with unknown shift, we can also use similar computations for $\hat f = (\hat{\nu}, \hat{h})$ and $\hat{\theta}$.} We first recall some notation from the proofs of Theorems~ \ref{thm:conc_g} and Proposition \ref{prop:relu}: $\bar A(\Tilde{M}_T \e_T) = \{\theta\in \Theta, \norm{\theta - \theta_0}_1 < \Tilde{M}_T \e_T \}  $,  $ A_{L_1}(M_T \e_T) = \{ (f, \theta) \in \mathcal{F} \times \Theta, \norm{\theta + \nu - \theta_0 - \nu_0}_1 + \norm{h - h_0}_1 < M_T \e_T \}  $ and $\Tilde{M}_T = \Tilde{M} \sqrt{\kappa_T}, \: M_T = M \sqrt{\kappa_T}$, $\Tilde{M} > M > 0$. We note that 
\begin{align*}
&\norm{\hat{\theta} - \theta_0}_1 \leq \Tilde{M}_T \e_T + \mathbb{E}^\Pi[\norm{\theta - \theta_0}_1 \mathds{1}_{\bar A(\Tilde{M}_T \e_T)^c} |N].
\end{align*}
Then, splitting $\bar A (\Tilde{M}_T \e_T)^c \times \mathcal{F}_T$  into $\bar A( \Tilde{M}_T \e_T)^c \times  \mathcal{F}_T \cap A_{L_1}(M_T \epsilon_T)$ and $\bar A(\Tilde{M}_T \e_T)^c \times  \mathcal{F}_T \cap A_{L_1}(M_T \epsilon_T)^c$, we control $\mathbb{E}^\Pi[\norm{\theta - \theta_0}_1 \mathds{1}_{B_T} |N]$ using the following arguments with $B_T$ representing either 
$\bar A(\Tilde{M}_T \e_T)^c  \times \mathcal{F}_T \cap A_{L_1}( M_T \epsilon_T)$ or $ A_{L_1}(M_T \epsilon_T)^c$. Using the decomposition \eqref{decomposition}, with $\kappa_T' = \kappa_T + c_1$, we have
\begin{align*}
\Probz{\mathbb{E}^\Pi[\norm{\theta - \theta_0}_1  \mathds{1}_{B_T}|N] >  \e_T} &\leq \Exz{\phi \mathds{1}_{\eve}} + \Probz{\{D_T < e^{-\kappa_T' T\epsilon_T^2}\} \cap \eve} + \Probz{\eve^c} + \frac{e^{\kappa_T' T\epsilon_T^2}}{\e_T}  \Pi( \mathcal F_T^c)\\
&+ \frac{e^{\kappa_T' T\epsilon_T^2}}{\e_T} \int_{\mathcal{F}_T \cap B_T}  \norm{\theta - \theta_0}_1 \Exz{\Exf{(1 - \phi)\mathds{1}_{\eve}} \Big|\mathcal{G}_0} d\Pi(f) \\
&\leq o(1) + o\left( \int  \norm{\theta - \theta_0}_1 d\Pi(f) \right)  = o(1),
\end{align*}
using the tests defined In Lemma \ref{lem:test_theta} if $B_T =\bar A(\Tilde{M}_T \e_T)^c   \times \mathcal{F}_T  \cap A_{L_1}(M_T \epsilon_T) $ or the tests defined in Lemma \ref{lem:tests} if $B_T = A_{L_1}(M_T \epsilon_T)^c$, and also that $\log T = o(T \epsilon_T^2)$ to obtain that $\frac{e^{\kappa_T' T\epsilon_T^2}}{\e_T}  \Pi( \mathcal F_T^c) \leq \Pi( \mathcal H_T^c) e^{\kappa_T' T\epsilon_T^2 - \log \e_T} = o(1)$, whichs terminates this proof.


\section{Proofs of some results on prior distributions}\label{sec:prior_dist}

In this section, we present an alternative construction of the prior distribution using mixtures of Beta distributions and the proof of Lemma \ref{lem:histo}, which gives one example of model where the condition \eqref{condA3} can be verified.

\subsection{Mixtures of Betas priors}\label{sec:mixture}


This family of prior distributions can be also considered alongside the ones presented in Section \ref{sec:priors} of \cite{main}. The following construction is similar to Section 2.3.2 of \cite{donnet18:supplement}, which is based on \cite{rousseau:09}. Using the hierarchical structure \eqref{parametrisation1} from Section \ref{sec:priors} ,  we define $\pi_h$ as follows. Let
$$   \tilde h_{\alpha, M}(x) = \int_u g_{\alpha,u}(x) dM(u), \quad g_{\alpha, u} ( x) = \frac{ \Gamma(\alpha/u(1-u)) }{ \Gamma(\alpha/u) \Gamma(\alpha/(1-u))} x^{-\alpha/(1-u)-1}(1-x)^{-\alpha/u-1},$$
and $\pi_{\tilde h}$ be the push forward distribution of $\Pi_\alpha \times \Pi_M$ by the transformation $(\alpha, M) \rightarrow h_{\alpha,M}$, where $\Pi_\alpha$ and $\Pi_M$   are respectively the probability distribution on $\alpha$ and $M$. Therefore $\pi_{\tilde h}$ is a bounded signed measure on $[0,1]$.  As in \cite{donnet18:supplement}, we choose $\sqrt{\alpha} $ to follow a Gamma distribution and  define $\Pi_M$ by 
$$ M(u) = \sum_{j=1}^J r_j p_j \delta_{u_j}(u), \quad u_j \stackrel{i.i.d.}{\sim} G_0, \quad J \sim \mathcal P(\lambda),$$
where $r_j$ are independent Rademacher random variables and $(p_1,\cdots, p_J) \sim \mathcal D(a_1, \cdots, a_J)$ with $\sum_{j=1}^J a_j \leq C$ for some fixed $C>0$. Note that since 
$\|h_{\alpha, M}\|_1 \leq 1$, we can define 
$$ h_{lk} = \tilde{S}_{lk} \tilde h_{lk}, \quad \|\tilde S^+\| \leq 1, \quad  \tilde h_{lk} \stackrel{i.i.d.}{\sim} \pi_{\tilde h},$$
so that the  prior distribution on $h$ is the push forward distribution of $\pi_{\tilde h}^{\otimes |I(\delta)|} \times \pi_S(\cdot|\delta)$ by the above transformation, with $\pi_S$ defined in \textbf{(S2)} in Section \ref{sec:priors} of \cite{main}.
Since $\tilde S$ is a (component-wise) upper bound on the matrix $S$,  $\|\tilde S^+\| \leq 1$ implies $\| S^+\| \leq 1$. We then arrive at the following result. 

\begin{corollary}{}\label{cor:mixture}
\ds{Let $N$ be a Hawkes process with link functions $\phi = (\phi_k)_k$ and parameter $f_0 = (\nu_0, h_0)$ such that $(\phi, f_0)$ verify the conditions of Lemma~\ref{lem:existence}, and Assumption \ref{ass-psi}.} Under the above spline  prior, if the prior on $S$ satisfies the conditions defined in \textbf{(S1)} (Section \ref{sec:priors} of \cite{main}), and also
if  $\forall (l,k) \in [K]^2, \: h_{lk}^0 \in \mathcal{H}(\beta,L)$ with $\beta >0$ and $\| S_0^+\| < 1$ then for $M$ large enough,
\begin{align*}
    \mathbb{E}_0 \left[\Pi\big(\|f - f_0 \|_1 > M  T^{-\beta/(2\beta+1)} \sqrt{\log \log T}(\log T)^q \big| N \big) \right]  = o(1),
\end{align*}
where $q= 5\beta/(4\beta+2)$ if $\phi$ verifies Assumption \ref{ass-psi}(i) , and $q=1/2+5\beta/(4\beta+2)$  if $\phi$ verifies Assumption \ref{ass-psi}(ii) .
\end{corollary}
 
%
%

\subsection{Proof of Lemma \ref{lem:histo}}\label{sec:proof_lem_histo}

\begin{lemma}[Lemma \ref{lem:histo}]
\ds{Let $N$ be a Hawkes process with ReLU link functions $\phi_k(x) = (x)_+, \forall k \in [K]$, and parameter $f_0 = (\nu_0, h_0)$ such that $(\phi, f_0)$ verify condition \textbf{(C1bis)} and  for all $l$, there exists $J_0 \in \mathbb{N}^*$ such that}
$$
    h_{lk}^{0}(t) = \sum_{j=1}^{J_0} \omega^{lk}_{j0} \mathds{1}_{I_j}(t), \quad \omega_{j0}^{lk} \in \mathbb Q, \quad \forall j \in [J_0],
$$
with $\{I_j\}_{j=1}^{J_0}$ a partition of $[0,A]$. Then, condition \eqref{condA3} of Proposition ~\ref{prop:relu} holds, i.e.,
\begin{align*}
    \lim \sup_{T \to \infty}\frac{1}{T} \mathbb{E}_0\left(\int_0^T \frac{\mathds{1}_{\lambda^k_t(f_0) > 0}}{\lambda^k_t(f_0)}dt \right) < + \infty, \quad   k \in [K].
\end{align*}
\end{lemma}
\begin{proof}
Let $f_0$ verifying the conditions of the lemma. We first show that there exists $c_0 > 0$ that depends only on the parameters $\{\nu_k^0, \{\omega_{j0}^{kl}\}_{j=1}^J\}_{k,l=1}^K$ such that $\forall k \in [K], \forall t \geq 0$, $\lambda^k_t(f_0) > 0 \implies \lambda^k_t(f_0) \geq c_0$.
We prove here the result for the unidimensional Hawkes model with $K =1$, but  our proof can be easily generalized to $K>1$. We therefore use the notation $\nu_0$ and   $w_{j0}$ for $\nu_1^0$ and $w_{j0}^{11}$. 

Since $w_{j0} \in \mathbb{Q}$, let $p_j, q_j \in \Z$ such that  $w_{j0} = p_j/q_j$ and let 
$q \in \Z$ be the least common multiple of $(p_j, q_j)$. Thus there exists $a_j \in \mathbb{Z}$ such that
$    \omega_{j0} = a_j/q.
$
and for any $t \geq 0$, let $n_j(t) = \int_{t-A}^t \mathds{1}_{I_j}(t-s) dN_s$ be the number of events that "activate" the bin $j$ at $t$. With this notation, we can then write
\begin{align*}
    \lambda_t(f_0) &= \left(\nu_0 + \sum_{j=1}^{J_0} n_j(t)\frac{a_j}{q}\right)_+ =  \left(\nu_0  + \sum_{j=1}^{J_0} n_j(t)\frac{a_j}{q} \right)_+  \\
    &= \left(\frac{1}{q}\left[\nu_0  q + \sum_{j=1}^{J_0} n_j(t)a_j\right] \right)_+.
\end{align*}
Let $\varepsilon > 0$ such that
    $ \varepsilon  = \min_{u \in \mathbb{Z},  \nu_0  q + u > 0} \nu_0 q + u
$. Then $\varepsilon >0$ and for any $t \geq 0$ such that $\Tilde \lambda_t(f_0)>0$, since $\sum_{j=1}^{J_0} n_j(t)a_j \in \mathbb{Z}$, then $\nu_0  q + \sum_{j=1}^{J_0} n_j(t)a_j \geq \varepsilon > 0$ and 
$  \lambda_t(f_0) \geq \varepsilon/q  =: c_0 > 0$, which proves that (i) holds.
Therefore, in this model, we have
\begin{align*}
    \frac{1}{T} \mathbb{E}_0\left(\int_0^T \frac{\mathds{1}_{\lambda_t(f_0) > 0}}{\lambda_t(f_0)}dt \right) &\leq \frac{1}{T} \mathbb{E}_0\left(\int_0^T \frac{\mathds{1}_{\lambda_t(f_0) > 0}}{c_0}dt \right) \leq \frac{1}{T} \mathbb{E}_0\left(\int_0^T \frac{1}{c_0}dt \right) = \frac{1}{c_0} < +\infty,
\end{align*}
which proves that  \eqref{condA3} is satisfied.
\end{proof}

\begin{remark}
\ds{We could similarly show that  if also $\forall l \in [K], \forall j \in [J]$, $\nu_k^0 \in \mathbb{R} \backslash \mathbb{Q}$, then there exists $d_0 < 0$ depending on $\{\nu_k^0, \{\omega_{j0}^{kl}\}_{j=1}^J\}_{k,l=1}^K$ such that $\forall k \in [K], \forall t \geq 0$,   $\lambda^k_t(f_0) = 0 \implies \Tilde{\lambda}^k_t(\nu_0, h_0) \leq d_0$.}
\end{remark}


\section{ Lemmas on tests }\label{sec:technical_lemmas}

\ds{In this section we prove two technical lemmas on the test functions used in  the proofs of Theorem \ref{thm:d1t} and Proposition \ref{prop:relu}. 
In Section \ref{sec:lem:tests}, we state and prove our first lemma, Lemma \ref{lem:tests}, which relates to the elementary test functions used in the proof of Theorem \ref{thm:d1t} (Section \ref{sec:proof_thm_d1t}) and in Section \ref{app:proof_test}, we prove Lemma \ref{lem:test_theta}, which provides the bound on the error of the tests used in the proof of Case 2 of Proposition \ref{prop:relu}.}

\subsection{Lemma \ref{lem:tests}: test used in the proof of Theorem \ref{thm:d1t}} \label{sec:lem:tests}
\begin{lemma}\label{lem:tests}
For $i \geq 1$, let $\mathcal{F}_i = \{ f \in \mathcal F_T; \nu_l \leq \nu_l^0+ 2K\|r_0\|_1\mathbb E_0(\Delta \tau_1) i \epsilon_T, \: \forall l \in [K]\}$ and $f_1 \in \mathcal{F}_i$. We define the test
\begin{equation}
    \phi_{f_1,i} = \max \limits_{l \in [K]} \mathds{1}_{\{N^l(A_{1l}) -  \Lambda^l(A_{1l},f_0)  \geq iT\epsilon_T/8\}} \wedge \mathds{1}_{\{N^l(A_{1l}^c) -  \Lambda^l(A_{1l}^c,f_0)  \geq iT\epsilon_T/8\}}, \nonumber
\end{equation}
where for all $l \in [K]$, $A_{1l} = \{ t \in [0,T]; \: \lambda^l_t(f_1) \geq \lambda^l_t(f_0) \}$, $\Lambda^l(A_{1l},f_0) = \int_0^T \mathds{1}_{A_{1l}}(t) \lambda^l_t(f_0)dt$ and $\Lambda^l(A_{1l}^c,f_0) = \int_0^T \mathds{1}_{A_{1l}^c}(t) \lambda^l_t(f_0)dt$. 
Then
\begin{equation}
    \mathbb{E}_0[\mathds{1}_{\Tilde{\Omega}_{T}}\phi_{f_1,i}] + \sup_{\|f-f_1\|_1 \leq i\epsilon_T/(12N_0)} \mathbb{E}_0 \left[\mathbb{E}_f[\mathds{1}_{\Tilde{\Omega}_{T}} \mathds{1}_{f \in S_i} (1 - \phi_{f_1,i})|\mathcal{G}_0] \right] \leq (2K+1) \max \limits_{l \in [K]}  e^{-x_{1l} Ti\epsilon_T(\sqrt{\mu_l^{0}} \land i \epsilon_T)}, \nonumber
\end{equation}
where for $l \in [K]$, $x_{1l} > 0$ is an absolute constant, $\mu_l^0 = \Exz{\lambda_t^l(f_0)}$, $N_0= 1 + \sum_{l=1}^K \mu_{l}^0$ and $S_i$ is defined in \eqref{def:slices} in Section \ref{sec:proof_thm_d1t}.
\end{lemma}

\begin{proof} 

For $l \in [K]$, let
$$\phi_{il} = \phi_{il}(f_1) = \mathds{1}_{\{N^l(A_{1l}) -  \Lambda^l(A_{1l},f_0)  \geq iT\epsilon_T/8\}}.$$

Mimicking the proof of Lemma 1 of \cite{donnet18:supplement}, we obtain that \begin{equation}\label{Err11}
    \Exz{\phi_{il} \mathds{1}_{\eve}} \leq e^{-x_1 i T \epsilon_T \min(\sqrt{\mu_l^{0} }, i\epsilon_T)}.
\end{equation}

We first consider the  event $\{\Lambda^l(A_{1l},f_1) -  \Lambda^l(A_{1l},f_0) \geq  \Lambda^l(A_{1l}^c,f_1) -  \Lambda^l( A_{1l}^c,f_0)\}$. Let $f \in \mathcal{F}_i$ such that $\|f-f_1\|_1 \leq \zeta i \epsilon_T$ with $\zeta = 1/(6N_0)$ and $N_0 = 1 + \sum_{l} \mu_l^{0}$. On $\Tilde{\Omega}_{T}$, using that $\phi_l$ is $L$-Lipschitz for any $l$, we have
\begin{align*}
    T \Tilde{d}_{1T}(f,f_1) &= \sum_{l=1}^K \int_0^T \mathds{1}_{A_2(T)}(t) |\lambda^l_t(f) - \lambda^l_t(f_1)| dt \leq \sum_{l=1}^K \int_0^T |\lambda^l_t(f) - \lambda^l_t(f_1)| dt\\
    &\leq   L \sum_{l=1}^K \int_0^T |\Tilde \lambda^l_t(\nu, h) - \Tilde \lambda^l_t(\nu_1, h_1)| dt\\
    &\leq T L \sum_{l=1}^K  |\nu_l - \nu_l^1| + L \sum_{l=1}^K\sum_{k=1}^K  \int_0^T \int_{t-A}^t |(h_{kl} - h_{kl}^1)(t-s)| N^k(ds)  \\
    &\leq T  L\norm{\nu - \nu_1}_1 + \max_l N^l[-A,T] L \sum_{l=1}^K\sum_{k=1}^K \|h_{kl} - h_{kl}^1\|_1 \\
    &\leq L N_0 T \norm{f - f_1}_1 \leq L N_0 T \zeta i \e_T.
\end{align*}
Moreover, since $f \in S_i$, on $\eve$, we also have that
\begin{align*}
    \int_0^T \mathds{1}_{A_2(T)}\lambda^l_t(f) dt \leq \int_0^T  \mathds{1}_{A_2(T)} \lambda^l_t(f_0) dt + K T(i+1) \epsilon_T \leq 2 T \mu_l^{0} + K T(i+1) \epsilon_T =: \Tilde{v}.
\end{align*}
Applying again inequality (7.7) of \cite{Hansen:Reynaud:Rivoirard} with $v = \Tilde{v}$ and using the computations of \cite{donnet18:supplement}, we arrive at
$$\Exf{\mathds{1}_{\eve} \mathds{1}_{f \in S_i} (1 - \phi_{il})\Big|\mathcal{G}_0} \leq 2K e^{-x_{1l} i T \epsilon_T \min(\sqrt{\mu_l^{0}}, i\epsilon_T)},$$
for some $x_{1l} > 0$. We can obtain similar results for
$$\phi_{il}' = \mathds{1}_{\{N^l(A_{1l}^c) -  \Lambda^l(\bar A_{1l}^c,f_0)  \geq iT\epsilon_T/8\}}.$$

Finally, with $\phi_{f_1, i} = \max \limits_{l \in [K]} \phi_{il} \wedge \phi_{il}'$, we arrive at the final results of this lemma:
\begin{align*}
&\Exz{\phi_{f_1, i} \mathds{1}_{\eve}} \leq \max_l e^{-x_{1l} i T \epsilon_T \min(\sqrt{\mu_l^{0} }, i\epsilon_T)} \leq e^{- (\min \limits_l x_{1l}) i T \epsilon_T \min(\sqrt{\mu_l^{0} }, i\epsilon_T)}  \\
&\mathbb{E}_f[\mathds{1}_{\eve} \mathds{1}_{f \in S_i} (1 - \phi_{f_1, i})|\mathcal{G}_0] \leq \min_l \mathbb{E}_f[\mathds{1}_{\eve} \mathds{1}_{f \in S_i} (1 - \phi_{i l})|\mathcal{G}_0] \leq 2K e^{- (\min \limits_l x_{1l}) i T \epsilon_T \min(\sqrt{\mu_l^{0}}, i\epsilon_T)}.
\end{align*}

\end{proof}

\subsection{Proof of Lemma \ref{lem:test_theta}}\label{app:proof_test}

\ds{In Lemma \ref{lem:test_theta}, we establish the bound on the type I and type II errors of the tests to estimate the parameter $\theta$ in the shifted ReLU link function considered in Case 2 of Proposition \ref{prop:relu}. }




We recall that $\Theta = \R_+ \backslash \{0\}^K$ and $\bar A(R) = \{\theta \in \Theta ; \: \norm{\theta - \theta_0}_1 \leq R\}$. Let $\zeta > 0$ and
$$(f_1, \theta_1) = (\nu_1, h_1, \theta_1) = ((\nu^1_k)_k, (h_{lk}^1)_{l,k}, (\theta^1_k)_k) \in (\bar A(\Tilde{M}_T \epsilon_T)^c \times \mathcal{F} )\cap A_{L_1}(M_T \epsilon_T),$$
with  $\tilde{M}_T = \Tilde{M} \sqrt{\kappa_T}$ , $M_T =M \sqrt{\kappa_T}$ and  $\tilde{M} \geq M$. Let $(f, \theta) \in (\bar A(\Tilde{M}_T \epsilon_T)^c \times \mathcal{F}) \cap A_{L_1}(M_T \epsilon_T)$  such that $\norm{f - f_1}_1 \leq \zeta \epsilon_T$, i.e,
\begin{align*}
& \sum_{k}|\nu_k - \nu_k^1| + |\theta_k - \theta_k^1| + \sum_{l,k} \norm{h_{l k}-h_{l k}^1}_1 \leq \zeta \epsilon_T.
\end{align*}
Since $\theta \in  \bar A(\Tilde{M}_T \epsilon_T)^c$, there exists $k \in [K]$ such that
$
  |\theta_k^0 - \theta_k| \geq \Tilde{M}_T \epsilon_T / K.
$
For this $k$, from assumption \eqref{ass:identif_theta}, there exists $ l \in [K]$ and $x_1,x_2, c_\star > 0$ such that $\forall x \in [x_1,x_2]$, $h_{lk}^0(x) \leq - c_\star<0$.

We first consider the case $\theta_k < \theta_k^0 - \Tilde{M}_T\epsilon_T/K$ and
%
%
recall the notation of Section \ref{sec:proof_conc_f}:  $\delta'=(x_2-x_1)/3$, $n_1 = \lfloor 2 \nu_k^1/(\kappa_1 c_\star ) \rfloor +1$ 
for some $\kappa_1 \in (0,1)$ and the subset of excursions
\begin{align*} 
\mathcal{E} = \{j \in [J_T]; \: N[\tau_{j}, \tau_{j} + \delta') = N^l[\tau_{j}, \tau_{j} + \delta') = n_1, \: N[\tau_{j} + \delta', \: \tau_{j+1}) = 0 \}.
\end{align*}
We recall that
\begin{align*}
I_k^0(f_1, \theta_1) = \left \{t \in [0,T]; \: \lambda_t^k(f_1, \theta_1) = \theta_k^1, \, \lambda^k_t(f_0, \theta_0) = \theta_k^0 \right  \},
\end{align*}
and we first state a preliminary lemma on $I_k^0(f_1, \theta_1) $, which is proved at the end of this proof.

\begin{lemma}\label{lem:I0_model1} 
\ds{In the Hawkes model with shifted ReLU link function , for any $f_0 \in \mathcal{F}$ such that \eqref{ass:identif_theta} is satisfied and  any $(f_1, \theta_1) \in (\bar A(\Tilde{M}_T \epsilon_T)^c \times \Theta) \cap A_{L_1}(M_T\epsilon_T)$, on $\eve'$, }
it holds that
$$
|I_k^0(f_1, \theta_1)| \geq \frac{x_2-x_1}{2} \sum_{j \in [J_T]} \mathds{1}_{j \in \mathcal{E}}, 
$$
with $\mathcal{E} $ defined in   \eqref{E:model1}. 
\end{lemma}
Let 
\begin{align*}
    \phi_k(f_1, \theta_1) := \mathds{1}_{N^k(I_k^0(f_1, \theta_1)) - \Lambda_k(I_k^0(f_1, \theta_1), f_0) < - v_T} \vee \mathds{1}_{|\mathcal{E}| < \frac{p_0 T}{2 \Exz{\Delta \tau_1}}},
\end{align*}
with $ \Lambda_k(I_k^0(f_1, \theta_1), f_0) = \int_0^T \mathds{1}_{I_k^0(f_1, \theta_1)}(t) \lambda_t^k(f_0)dt$, $p_0 = \Probz{j \in \mathcal{E}}$, $v_T = w_T T \epsilon_T > 0$ with  $w_T > 0$ chosen later. We have by definition 
\begin{align}\label{eq:type1_error_m2_c1}
\Exz{\phi_k(f_1, \theta_1)\mathds{1}_{\Tilde{\Omega}_T'} } \leq \Probz{\left \{ |\mathcal{E}| < \frac{p_0  T}{2\Exz{\Delta \tau_1}} \right \} \cap \Tilde{\Omega}_T'} + \Probz{ \left \{ N^k(I_k^0(f_1, \theta_1)) - \Lambda_k(I_k^0(f_1, \theta_1), f_0) < - v_T \right \} \cap \Tilde{\Omega}_T'}.
\end{align}
For the first term on the RHS of \eqref{eq:type1_error_m2_c1}, we apply Hoeffding's inequality with $X_j = \mathds{1}_{j \in \mathcal{E}} \overset{i.i.d.}{\sim} \mathcal{B}(p_0)$:
\begin{align*}
\Probz{\left \{ |\mathcal{E}| < \frac{p_0 T}{2\Exz{\Delta \tau_1}} \right \} \cap \Tilde{\Omega}_T'} &\leq \Probz{\left \{\sum_{j=1}^{J_T} X_j < \frac{p_0 T}{2\Exz{\Delta \tau_1}} \right \} \cap \eve'} \\
 &\leq \Probz{\sum_{j=1}^{T/(2\Exz{\Delta \tau_1})} X_j < \frac{p_0 T}{2\Exz{\Delta \tau_1}}}
\lesssim e^{-\frac{ T p_0^2}{8\Exz{\Delta \tau_1}} } = o(e^{-u_0 T \epsilon_T^2)}),
\end{align*}
for $u_0 < p_0^2/ (8 \Exz{\Delta \tau_1})$ and using that  on $\Tilde{\Omega}_T'$, $J_T > T/(2\Exz{\Delta \tau_1})$.
For the second term of the RHS of \eqref{eq:type1_error_m2_c1}, we apply inequality (7.7) in \cite{Hansen:Reynaud:Rivoirard}, with $H_t = \mathds{1}_{I_k^0(f_1, \theta_1)}(t)$, $H_t^2 \circ \Lambda_t^k(f_0) = \int_0^T \mathds{1}_{I_k^0(f_1, \theta_1)}(t) \theta_k^0 dt = \theta_k^0 |I_k^0(f_1, \theta_1)| \leq \theta_k^0 T$, $x=x_3T\epsilon_T^2$, $x_3 > 0$.
If $\sqrt{2\theta_k^0 T x} + x/3 \leq w_T T\epsilon_T$ and $x_3 > u_0$, then by (7.7) of \cite{Hansen:Reynaud:Rivoirard}, 
\begin{align*}
 \Probz{ \left \{ N^k(I_k^0(f_1, \theta_1)) - \Lambda_k(I_k^0(f_1, \theta_1), f_0) < - v_T \right \} \cap \Tilde{\Omega}_T'} \leq e^{-x_3 T \epsilon_T^2}  = o(e^{- u_0 T \epsilon_T^2}).
\end{align*}
Reporting into \eqref{eq:type1_error_m2_c1}, we obtain that
$
\Exz{\phi_k(f_1)\mathds{1}_{\Tilde{\Omega}_T'} }  = o(e^{- u_0 T \epsilon_T^2})
$, which proves the first part of Lemma \ref{lem:test_theta}. To prove the second part of Lemma \ref{lem:test_theta}, we first note that
\begin{align}\label{eq:type2_error_m2_c1}
\Exf{(1 - \phi_k(f_1, \theta_1))\mathds{1}_{\Tilde{\Omega}_T'} }  = \Probf{\left \{N^k(I_k^0(f_1, \theta_1)) - \Lambda_k(I_k^0(f_1, \theta_1), f_0) \geq - v_T \right \} \cap \left \{ |\mathcal{E}| \geq \frac{p_0 T}{2\Exz{\Delta \tau_1}} \right \} \cap \Tilde{\Omega}_T'}.
\end{align}
We also have
\begin{align}\label{Lambda:2terms}
\Lambda_k(I_k^0(f_1, \theta_1),f_0) - \Lambda_k(I_k^0(f_1, \theta_1),f) &= \Lambda_k(I_k^0(f_1, \theta_1),f_0) - \Lambda_k(I_k^0(f_1, \theta_1),f_1) + \Lambda_k(I_k^0(f_1, \theta_1),f_1) - \Lambda_k(I_k^0(f_1, \theta_1),f).
\end{align}
Firstly, if $|\mathcal{E}| > \frac{p_0}{2\Exz{\Delta \tau_1}} T$, then from Lemma \ref{lem:I0_model1},
\begin{align}\label{eq:lower_bound_I}
    |I_k^0(f_1, \theta_1)| \geq  \frac{(x_2-x_1) p_0}{4 \Exz{\Delta \tau_1}} T
\end{align}
and
\begin{align}\label{Lambda:first}
\Lambda_k(I_k^0(f_1, \theta_1),f_0) - \Lambda_k(I_k^0(f_1, \theta_1),f_1) = (\theta_k^0 - \theta_k^1)|I_k^0(f_1, \theta_1)| \geq \frac{(x_2-x_1) p_0 }{8 K\Exz{\Delta \tau_1}} \Tilde{M}_T T \epsilon_T,
\end{align}
since $\|\theta - \theta_1\|_1 \leq \zeta \e_T$ therefore $\theta_k^0 - \theta_k^1 \geq |\theta_k^0 - \theta_k| -  |\theta_k - \theta_k^1| \geq \Tilde{M}_T\e_T/K - \zeta \e_T \geq \frac{\Tilde{M}_T}{2K}\e_T$ for $T$ large enough.

Secondly, since $\forall t \in I_k^0(f_1, \theta_1),$ $\Tilde \lambda_t^k(\nu_1, h_1) \leq 0$ and $\Tilde \lambda_t^k(\nu, h) \leq 0$, we have
\begin{align}
\Lambda_k(I_k^0(f_1, \theta_1), f_1) - \Lambda_k(I_k^0(f_1, \theta_1),f) 
&=(\theta_k^1 - \theta_k)|I_k^0(f_1, \theta_1)| - \int_{I_k^0(f_1, \theta_1)}\!\!\!\! \Big((\tilde{\lambda}_t^k(\nu, h))_+ - (\tilde{\lambda}_t^k(\nu_1, h_1))_+\Big)dt \nonumber \\
&\geq(\theta_k^1 - \theta_k)|I_k^0(f_1, \theta_1)| - \int_{I_k^0(f_1, \theta_1)}\!\!\!\! |\tilde{\lambda}_t^k(\nu, h) - \tilde{\lambda}_t^k(\nu_1, h_1)|dt \nonumber \\
&\geq - \zeta T \epsilon_T - \int_{0}^T |\tilde{\lambda}_t^k(\nu, h) - \tilde{\lambda}_t^k(\nu_1, h_1))|dt\label{control:Lambda},
\end{align}
 where we have used the fact that by definition $|I_k^0(f_1, \theta_1)| \leq T$. Using Fubini's theorem, for any $l \in [K]$, we have
\begin{align}\label{eq:delta_tilde}
 \int_{0}^T& |\tilde{\lambda}^k_t(\nu, h) - \tilde{\lambda}_t^k(\nu_1, h_1))|dt = \int_{0}^T \left|\nu_k - \nu_k^1 + \sum_l \int_{t-A}^{t^-} (h_{lk} - h_{lk}^1)(t - s)dN_s^l \right| dt \nonumber \\
 &\leq T|\nu_k - \nu_k^1| + \sum_l \int_{T-A}^T \int_{s}^{s+A} |h_{lk} - h_{lk}^1|(t)|dt dN_s^l = T|\nu_k - \nu_k^1| + \sum_l \norm{h_{lk} - h_{lk}^1}_1 N^l[-A,T] \nonumber\\
 &\leq T \norm{f - f_1} \left(1 + \sum_l (\mu_l^0 + \delta_T)\right) \leq \zeta T \epsilon_T \left(1 + 2 \sum_l \mu_l^0 \right),
\end{align}
using the definition of $\eve'$ in Section \ref{sec:proof_conc_g}. Consequently, reporting the previous upper bound into \eqref{control:Lambda}, we obtain
\begin{align*}
\Lambda_k(I_k^0(f_1, \theta_1),f_1) - \Lambda_k(I_k^0(f_1, \theta_1),f) &\geq -\zeta T \epsilon_T ( 2+ 2 \sum_l \mu_l^0).
\end{align*}
Therefore, using now  \eqref{Lambda:first} and \eqref{control:Lambda} in \eqref{Lambda:2terms}, we arrive at
\begin{align*}
\Lambda_k(I_k^0(f_1, \theta_1),f_0) - \Lambda_k(I_k^0(f_1, \theta_1),f)
&\geq \frac{\Tilde{M}_T (x_2-x_1) p_0 }{8 K \Exz{\Delta \tau_1}} T \e_T -  \zeta T \epsilon_T ( 2+ 2 \sum_l \mu_l^0) \geq \frac{\Tilde{M}_T (x_2-x_1) p_0}{16 K \Exz{\Delta \tau_1}} T\epsilon_T ,
\end{align*}
since for $T$ large enough, $\Tilde{M}_T >  \frac{16 K \zeta  \Exz{\Delta \tau_1} (2+ 2 \sum_l \mu_l^0)}{(x_2-x_1) p_0}$. Reporting into \eqref{eq:type2_error_m2_c1}, we obtain
\begin{align*}
&\Probf{\left \{N^k(I_k^0(f_1, \theta_1)) - \Lambda_k(I_k^0(f_1, \theta_1), f_0) \geq - v_T \right \} \cap \left \{ |\mathcal{E}| \geq \frac{p_0 T}{2 \Exz{\Delta \tau_1}} \right \} \cap \Tilde{\Omega}_T'} \\
&\leq \Probf{\{N^k(I_k^0(f_1, \theta_1)) - \Lambda_k(I_k^0(f_1, \theta_1), f) \geq - v_T + \frac{\Tilde{M}_T (x_2-x_1) p_0  }{16 \Exz{\Delta \tau_1}} T \e_T \} \cap \eve' } \\
&\leq \Probf{\{N^k(I_k^0(f_1, \theta_1)) - \Lambda_k(I_k^0(f_1, \theta_1), f) \geq v_T\} \cap \eve'}, 
\end{align*}
if $ \Tilde{M}_T > \frac{16 w_T \Exz{\Delta \tau_1}}{(x_2-x_1) p_0}$, which is true for $\Tilde{M}$ large enough (recall that $ \Tilde{M}_T = \Tilde{M} \sqrt{\kappa_T} $) if $w_T \leq C \sqrt{\kappa_T}$ with $C>0$ a constant.

Similarly to the proof of Lemma 1 in \cite{donnet18:supplement}, we can adapt inequality (7.7) from \cite{Hansen:Reynaud:Rivoirard} with $H_t = \mathds{1}_{I_k^0(f_1, \theta_1)}(t)$ to the conditional probability $\Exf{.|\mathcal{G}_0}$ and the supermartingale $\int_0^T \mathds{1}_{I_k^0(f_1, \theta_1)}(t) (dN_t - \lambda_t^k(f, \theta)dt)$. With $\tau = T$, $x_T = x_1T \epsilon_T^2$, we obtain
\begin{align}\label{eq:type_2_error}
 \Probf{\{N^k(I_k^0(f_1, \theta_1)) - \Lambda_k(I_k^0(f_1, \theta_1), f) > v_T \} \cap \eve'} \leq e^{-x_T T \epsilon_T^2} = o(e^{-(\kappa_T + c_1) T \epsilon_T^2}), \quad \text{ if } x_T >\kappa_T + c_1.
\end{align}
For this to be true, we also need $v_T  > \sqrt{2 \Tilde{v} (\kappa_T + c_1) T \epsilon_T^2} + (\kappa_T + c_1)T \epsilon_T^2/3$ where $\tilde{v}$ is an upper bound of $H_t^2 \circ \Lambda^k_t(f)$. Using the fact that $\forall t \in I_k^0(f_1, \theta_1), \: \Tilde \lambda_t^k(\nu_1, h_1) \leq 0$, we have
\begin{align*}
H_t^2 \circ \Lambda^k_t(f) &= \int_{I_k^0(f_1, \theta_1)} \lambda_t^k(f, \theta) dt = \theta_k |I_k^0(f_1, \theta_1)| + \int_{I_k^0(f_1, \theta_1) \cap \{\Tilde{\lambda}_t^k(\nu, h) > 0\}} \Tilde{ \lambda}^k_t(\nu, h) dt \\
&\leq \theta_k |I_k^0(f_1, \theta_1)| + \int_{I_k^0(f_1, \theta_1) \cap \{\Tilde{\lambda}^k_t(\nu, h) > 0\}} |\Tilde{\lambda}^k_t(\nu, h) - \Tilde{\lambda}^k_t(\nu_1, h_1)| dt \\
&\leq \theta_k |I_k^0(f_1, \theta_1)| + \zeta T \epsilon_T \left(1 + 2 \sum_l \mu_l^0 \right) \leq T (\theta_k + \Tilde{M}_T \epsilon_T/K) \leq \theta_k^0 T =: \Tilde{v},
\end{align*}
using \eqref{eq:delta_tilde} and since for $T$ large enough, $\zeta K (1 + 2 \sum_l \mu_l^0) < M_T\leq  \Tilde{M}_T$. Consequently, if $w_T   > \sqrt{2 \theta_k^0 (\kappa_T + c_1)} + (\kappa_T + c_1) \epsilon_T/3$ and $w_T \leq C \sqrt{\kappa_T}$ (which is possible since $\epsilon_T = o(1/\sqrt{\kappa_T})$ by assumption), then \eqref{eq:type_2_error} holds and we can finally
conclude that $\Exf{(1 - \phi_k(f_1, \theta_1))\mathds{1}_{\Tilde{\Omega}_T'} } = o(e^{-(\kappa_T + c_1)  T \epsilon_T^2})$ is verified, which leads to the second part of Lemma \ref{lem:test_theta}.

In the alternative case where $\theta_k > \theta_k^0 + \Tilde{M}_T \epsilon_T/K$, similar arguments can be applied with $I_k^0(f_1, \theta_1)$ defined as in \eqref{def:I_0_model1}  and $\mathcal{E}$ defined as in \eqref{E:model1} except that $n_1 =\lfloor   2 \nu_k^0/(\kappa_1 c_\star)\rfloor + 1$. We then use the following test, with $v_T= w_T T \e_T$
$$\phi_k(f_1, \theta_1):= \mathds{1}_{N^k(I_k^0(f_1, \theta_1)) - \Lambda_k(I_k^0(f_1, \theta_1), f_0) > v_T} \vee \mathds{1}_{|\mathcal{E}| < \frac{p_0 T}{2\Exz{\Delta \tau_1}}}.$$
Then Hoeffding's inequality and inequality (7.7) from \cite{Hansen:Reynaud:Rivoirard} lead to $
\Exz{\phi_k(f_1, \theta_1)\mathds{1}_{\Tilde{\Omega}_T} } = o(e^{-u_0 T \epsilon_T^2})$.
For the second part of  Lemma \ref{lem:test_theta}, we first note that in this case, since $\forall t \in I_k^0(f_1, \theta_1), \: \lambda_t^k(f, \theta) \geq \theta_k$ (and also $\lambda_t^k(f_0, \theta_0) = \theta_k^0$, $\lambda_t^k(f_1, \theta_1) = \theta_k^1$), then on the event $|\mathcal{E}| \geq \frac{p_0 T}{2\Exz{\Delta \tau_1}}$,
\begin{align*}
\Lambda_k(I_k^0(f_1, \theta_1),f_0) &- \Lambda_k(I_k^0(f_1, \theta_1),f) 
\leq (\theta_k^0 - \theta_k^1)|I_k^0(f_1, \theta_1)| + (\theta_k^1 - \theta_k)|I_k^0(f_1, \theta_1)| \\
&\leq (- \Tilde{M}_T \epsilon_T / K + \zeta \epsilon_T) |I_k^0(f_1, \theta_1)|
\leq - \frac{\Tilde{M}_T \epsilon_T |I_k^0(f_1, \theta_1)|}{  2 K} \leq - \frac{(x_2-x_1) p_0}{8 K \Exz{\Delta \tau_1}}\Tilde{M}_T T \epsilon_T,
\end{align*}
for $T$ large enough and using \eqref{eq:lower_bound_I}.
Consequently,
\begin{align*}
&\Probf{\{N^k(I_k^0(f_1, \theta_1)) - \Lambda_k(I_k^0(f_1, \theta_1), f_0) \leq v_T\} \cap \left \{ |\mathcal{E}| \geq \frac{p_0 T}{2\Exz{\Delta \tau_1}} \right \} \cap \eve'} \\
&\leq \Probf{\{N^k(I_k^0(f_1, \theta_1)) - \Lambda_k(I_k^0(f_1, \theta_1), f) \leq  v_T -  \frac{  (x_2-x_1) p_0}{8\Exz{\Delta \tau_1}} \Tilde{M}_T T \e_T\} \cap \eve'} \\
&\leq \Probf{\{N^k(I_k^0(f_1, \theta_1)) - \Lambda_k(I_k^0(f_1, \theta_1), f) \leq  - v_T\} \cap \eve'},
\end{align*}
if $\Tilde{M}_T > \frac{16 K \Exz{\Delta \tau_1}}{(x_2-x_1) p_0} w_T$. Applying inequality (7.7) from \cite{Hansen:Reynaud:Rivoirard}, we can finally obtain
\begin{align*}
\Exf{(1 - \phi_k(f_1, \theta_1))\mathds{1}_{\Tilde{\Omega}_T'} } = o(e^{-(\kappa_T + c_1) T \epsilon_T^2}),
\end{align*}
which ends the proof of Lemma \ref{lem:test_theta}. 

%
%
%
%


\medskip

\textbf{Proof of Lemma \ref{lem:I0_model1} }




Let $(f_0, \theta_0) \in \mathcal{F} \times \Theta$, $(f_1, \theta_1) \in ( \mathcal{F} \times \bar A(\Tilde{M}_T \epsilon_T)^c) \cap A_{L_1}(M_T \epsilon_T)$ and $k \in [K]$ such that $|\theta^1_k - \theta_k^0| > \Tilde{M}_T \epsilon_T/K$. For this $k$, from assumption \eqref{ass:identif_theta}, there exists $ l \in [K]$ and $x_1,x_2, c_\star > 0$ such that $\forall x \in [x_1,x_2]$, $h_{lk}^0(x) \leq - c_\star<0$. We first consider the case $\theta_k^1 < \theta_k^0 - \Tilde{M}_T\epsilon_T/K$. Since $(f_1, \theta_1) \in  A_{L_1}(M_T \epsilon_T)$, we also have that $|\theta_k^1 + \nu_k^1 - \theta_k^0 -\nu_k^0| \leq M_T \epsilon_T$, which implies that $\nu_k^1 > \nu_k^0 - (M_T -  \Tilde{M}_T/K) \epsilon_T > \nu_k^0/2$. For $0<\kappa_1 <1$, we define
\begin{align*}
B_1 = \{x \in [0,A]; \: h_{lk}^{1-}(x) > \kappa_1 c_\star \}  , \quad n_1 = \left\lfloor \frac{   2\nu_k^1 }{ \kappa_1 c_\star } \right\rfloor +1.
\end{align*}
Moreover, since $\norm{h_{lk}^0 - h_{lk}^1}_1 \leq M_T\epsilon_T$ and $h_{lk}^{0-} (x) \geq c_\star$ for $x \in [x_1,x_2]$, 
\begin{align*}
&| [x_1,x_2] \cap B_1^c| c_\star(1 - \kappa_1) \leq \int_{[x_1,x_2] \cap B_1^c} (h_{lk}^1-h_{lk}^0)(x)dx \leq M_T\epsilon_T \\
&\implies |[x_1,x_2] \cap B_1| \geq (x_2 - x_1) - \frac{M_T\epsilon_T}{c_\star(1 - \kappa_1)} \geq 3(x_2 - x_1)  / 4, 
\end{align*}
for $T$ large enough. 

Now let  $\delta' = (x_2 - x_1) / 4$.  For $j \in \mathcal{E}$, we denote $T_1, \dots, T_{n_1}$ the $n_1$ events occurring on $[\tau_j, \tau_j + \delta']$. For $t \in [\tau_j + x_1 + \delta', \tau_j + x_2]$ , we have $t - T_i \in [x_1, x_2]$ for any $i \in [n_1]$ and
\begin{align*}
\Tilde \lambda^k_t(\nu_0, h_0) &= \nu_k^0 + \sum_{i \in [n_1]} h_{lk}^0(t - T_i) < \nu_k^0 - n_1 c_\star < 2\nu_k^1 - n_1 \kappa_1 c_\star < 0,
\end{align*}
by definition of $n_1$. Similarly, for $t \in B_1 + [\tau_j, \tau_j + \delta']$, we have $t - T_i \in B_1$ and therefore
\begin{align*}
\Tilde \lambda_t(\nu_1, h_1) &= \nu_k^1 + \sum_{i \in [n_1]} h_{lk}^1(t - T_i) < 2 \nu_k^1 - n_1 \kappa_1 c_\star < 0.
\end{align*}
Consequently, for $t \in ([x_1,x_2] \cap B_1) +  [\tau_j, \tau_j + \delta']$,
$
\lambda^k_t(f_0, \theta_0) = \theta_k^0 $  and $ \lambda_t^k(f_1, \theta_1) = \theta_k^1,$
and thus $([x_1,x_2] \cap B_1) +  [\tau_j, \tau_j + \delta'] \subset I_k^0(f_1, \theta_1)$. Moreover, we have
\begin{align*}
\left| ([x_1,x_2] \cap B_1) +  [\tau_j, \tau_j + \delta'] \right|\geq 3(x_2 - x_1)/4 - (x_2 - x_1)/4  \geq (x_2 - x_1)/2.
\end{align*}
Consequently,
\begin{align*}
|I_k^0(f_1, \theta_1)| &= \sum_{j=0}^{J_T} [\tau_j,\tau_{j+1}] \cap \{t \geq 0 ; \: \lambda_t^k(f_0, \theta_0) = \theta_0, \: \lambda_t^k(f_1, \theta_1) = \theta_1\} \geq \sum_{j \in [J_T]} \frac{x_2 - x_1}{2} \mathds{1}_{j \in \mathcal{E}}.
\end{align*}
In the alternative case $\theta_k^1 > \theta_k^0 + \Tilde{M}_T\epsilon_T/K$, similar computations can be derived by defining  $n_1$ as
$
n_1 = \min \{n \in \N; \: n \kappa_1 c_\star > \nu_k^0 \}.
$



\section{Lemmas on $L_T(f_0) - L_T(f)$} \label{sec:lem:KL}

 For $f_0, f \in \mathcal{F}$, we define the Kullback-Leibler (KL) divergence in the Hawkes model as
\begin{equation}\label{eq:def_KL_brute}
    KL(f_0,f) = \mathbb{E}_0[L_T(f_0) - L_T(f)].
\end{equation}
\ds{With a slight abuse of notation, we still use the same notations $L_T(f_0), L_T(f),   KL(f_0,f)$ in the nonlinear model with shifted ReLU link function with the additional shift parameter $\theta$. We also note that with the standard ReLU link function, the KL divergence can be infinite for some $f \in \mathcal{F}$ - e.g., if there exists $t \in [0,T]$ such that $dN_t^k = 1$ and $ \lambda^k_t(f) = 0$. However, in this model, for any $f \in B_\infty(\epsilon_T)$, $\lambda^k_t(\nu, h) \geq  \lambda^k_t(\nu_0, h_0) $, which implies that $ KL(f_0,f) < +\infty$. The next lemma provides some upper bound on the KL divergence on $B_\infty(\epsilon_T)$ with all the link functions considered in Theorem \ref{thm:conc_g} and Proposition \ref{prop:relu}. } 

\subsection{Lemma to bound the  Kullback - Leibler divergence }  \label{sec:KL}

\begin{lemma}\label{lem:KLdecomp}
\ds{Under the assumptions of Theorem \ref{thm:conc_g} and of Case 2 of Proposition \ref{prop:relu}, for any $f \in B_\infty(\epsilon_T)$ and $T$ large enough,
\begin{equation*}
    0 \leq KL(f_0,f) \leq     \kappa_1 T \epsilon_T^2, 
\end{equation*}
and, under the assumptions of Case 1 of  Proposition \ref{prop:relu}, we similarly have
\begin{equation*}
    0 \leq KL(f_0,f) \leq  \kappa_2  (\log T)^2 T,
\end{equation*}    
with $\kappa_1, \kappa_2 > 0$ constants that only depends on $(\phi_k)_k$ and $f_0$.}
\end{lemma}


\begin{remark}\label{rem:other_rates}
\ds{For the models considered in Theorem  \ref{thm:conc_g} and with the shifted ReLU link function (Case 2 of Proposition \ref{prop:relu}),  for $f \in B_2(\epsilon_T,B)$, we instead obtain
\begin{align*}
    0 \leq KL(f_0,f) \lesssim (\log \log T) T \epsilon_T^2.
\end{align*}
Moreover, with the standard ReLU link function (Case 1 of Proposition \ref{prop:relu}), without assuming that the additional condition \eqref{condA3} holds, we can also obtain the sub-obtimal bound
\begin{align*}
     0 \leq KL(f_0,f) \lesssim  T \epsilon_T,
\end{align*}
which would also lead to the sub-optimal posterior concentration rate $\sqrt{\epsilon_T}$.}
\end{remark}

\begin{proof} 
\ds{For simplicity of exposition, throughout this proof, we use the notation $\lambda^k_t(f), \lambda^k_t(f_0)$ for the intensity in all models, therefore including the case  $\lambda^k_t(f, \theta), \lambda^k_t(f_0, \theta_0)$ (Case 2 of Proposition \ref{prop:relu}).} 

Firstly, similarly to the proof of Lemma 2 of \cite{donnet18:supplement}, we can easily prove that $KL(f_0,f) \geq 0$. Secondly, since intensities are predictable, we have
\begin{align}\label{eq:exp_martingale}
\Exz{\int_{0}^T \log \left( \frac{ \lambda^k_t(f_0) }{\lambda^k_t(f)}\right)(dN_t^k-\lambda^k_t(f_0)dt)}=0.
\end{align} 
Since
\begin{equation}\label{eq:def_KL_dev}
KL(f_0, f)= \sum_k \Exz{\int_{0}^T \log \left(\frac{ \lambda^k_t(f_0) }{\lambda^k_t(f)}\right)dN_t^k +  \int_0^T (\lambda^k_t(f) - \lambda^k_t(f_0)) dt },
\end{equation}
then, with
\begin{align}\label{eq:RT}
R_T= \sum_k \Exz{ \mathds{1}_{\eve^c} \int_{0}^T\lambda^k_t(f_0) \log \left( \frac{ \lambda^k_t(f_0) }{\lambda^k_t(f)}\right)dt} + \Exz{\mathds{1}_{\eve^c}  \int_0^T (\lambda^k_t(f) - \lambda^k_t(f_0)) dt},
\end{align}

\begin{align}\label{eq:def_kl_rt}
    KL(f_0, f) 
= \sum_k \Exz{ \mathds{1}_{\eve} \left(\int_{0}^T\lambda^k_t(f_0) \log \left( \frac{ \lambda^k_t(f_0) }{\lambda^k_t(f)}\right)dt+  \int_0^T (\lambda^k_t(f) - \lambda^k_t(f_0)) dt \right)} + R_T.
\end{align}

We first show that $R_T = o(T \e_T^2)$. For the first term on the RHS of \eqref{eq:RT}, if $f \in B_\infty(\epsilon_T)$, we use that $\log x \leq x - 1$ for $x \geq 1$ and we have
\small
\begin{align}
\sum_{k} &\Exz{\mathds{1}_{\eve^c}  \int_0^T \log \frac{\lambda^k_t(f)}{\lambda^k_t(f_0)} \lambda^k_t(f_0)dt } \leq \sum_{k} \Exz{\mathds{1}_{\eve^c}\int_0^T   \mathds{1}_{\lambda_t^k(f) > \lambda_t^k(f_0)}\log \frac{\lambda^k_t(f)}{\lambda^k_t(f_0)} \lambda^k_t(f_0)dt } \nonumber \\
&\leq \sum_{k} \Exz{\int_0^T \mathds{1}_{\eve^c} \mathds{1}_{\lambda_t^k(f_0) > 0}\left( \lambda^k_t(f)- \lambda^k_t(f_0)\right)dt} \nonumber \\
&\leq \sum_{k} T L \left(|\nu_k^0 - \nu_k| + \sum_l \norm{h_{lk} - h_{lk}^0}_\infty \Exz{ \mathds{1}_{\eve^c} \sup \limits_{t \in [0,T]} N^l[t-A,t)} \right) \nonumber \\
&\leq T L \sum_{k} \left(|\nu_k^0 - \nu_k| + \sum_l \norm{h_{lk} - h_{lk}^0}_\infty\right)  \Exz{\mathds{1}_{\eve^c} \max_l \sup \limits_{t \in [0,T]} N^l[t-A,t)}\leq  L T^{1-\beta} \epsilon_T \label{eq:bound_RT}
\end{align}
\normalsize
for $T$ large enough, using Lemma \ref{lem:main_event} for $\beta > 0$. \ds{If the model verifies Assumption \ref{ass-psi}(i),}
and $f \in B_2(\epsilon_T, B)$, we have
\begin{align*}
    \frac{\lambda^k_t(f)}{\lambda^k_t(f_0)} \vee \frac{\lambda^k_t(f_0)}{\lambda^k_t(f)} \leq 2\frac{2\theta_k^0 + 2 L \nu_k^0 +  L (B + \max_l \norm{h_{lk}^0}_\infty) \sup_{t} N[t-A,t)}{\ds{\inf_x \phi_k(x)}},
\end{align*}
therefore
\begin{align}
\Exz{\mathds{1}_{\eve^c}  \int_0^T \left|\log \frac{\lambda^k_t(f)}{\lambda^k_t(f_0)}\right| \lambda^k_t(f_0)dt } &\lesssim \Exz{\mathds{1}_{\eve^c} \max_l \sup_{t \in [0,T]} N^l[t-A,t) \int_0^T \lambda^k_t(f_0)dt } \nonumber \\
&\lesssim T \Exz{\mathds{1}_{\eve^c} \left(\sup_{t \in [0,T]} N[t-A,t)\right) \left(\nu_k^0  +  \max_l \norm{h_{lk}^0}_\infty \sup_{t \in [0,T]} N^l[t-A,t)\right) } \nonumber \\
&\lesssim T \Exz{\mathds{1}_{\eve^c} \max_l \left(\sup \limits_{t \in [0,T]} N^l[t-A,t)\right)^2} \lesssim  T^{1-\beta}. \nonumber
\end{align}
\ds{If instead the model verifies Assumption \ref{ass-psi}(ii),}
using that $\log \phi_k$ is $L_1$-Lipschitz for any $k$, we can alternatively use that 
\small
\begin{align}
\sum_{k} & \Exz{\mathds{1}_{\eve^c}  \int_0^T \left|\log \frac{\lambda^k_t(f)}{\lambda^k_t(f_0)} \right|\lambda^k_t(f_0)dt }
\leq L_1 \sum_{k} \Exz{\int_0^T \mathds{1}_{\eve^c} \lambda_t^k(f_0)|\Tilde \lambda^k_t(f)- \Tilde \lambda^k_t(f_0)|dt} \nonumber \\
&\lesssim \sum_{k} T \left(|\nu_k^0 - \nu_k| + \sum_l \norm{h_{lk} - h_{lk}^0}_\infty \Exz{ \mathds{1}_{\eve^c} \max_l \left(\sup \limits_{t \in [0,T]} N^l[t-A,t)\right)^2} \right) \lesssim  T^{1-\beta} \nonumber.
\end{align}
\normalsize
We can additionally bound the second term of \eqref{eq:RT} in a similar fashion and conclude that, in all cases, $R_T = O(T^{1-\beta})  = o(T \epsilon_T^2)$ for $\beta$ large enough.


\ds{To bound the first term of the RHS of \eqref{eq:def_kl_rt}, we consider separately the models satisfying Assumption \ref{ass-psi}(i) and (ii) and Case 1 and Case 2 of Proposition \ref{prop:relu}. }

\noindent 
\textbf{Scenario 1: under Assumption \ref{ass-psi}(i) or Case 2 of Proposition \ref{prop:relu} } 

\ds{Under Assumption \ref{ass-psi}(i), for any $f \in B_\infty(\epsilon_T)$ or $f \in B_2(\epsilon_T,B)$  and $t \geq 0$,  $\lambda^k_t(f) \geq \inf_x \phi_k(x)\geq \min_k \inf_x \phi_k(x)$ and $\lambda^k_t(f_0) \leq L\nu_k^0 + L\sup_{t \in [0,T]} N[t-A,t) \sum_l\| h_{lk}^0\|_\infty $.  In Case 2 of Proposition \ref{prop:relu}, for $T$ large enough, $t \in [0,T]$ and $\theta \in B_\infty^\Theta(\epsilon_T)$,  $\lambda^k_t(f, \theta) \geq \theta_k \geq \theta_k^0/2$ and $\lambda^k_t(f_0, \theta_0) \leq \theta_k^0 + L\nu_k^0 + L\sup_{t \in [0,T]} N[t-A,t) \sum_l\| h_{lk}^0\|_\infty $. Therefore, in this scenario,  on $\eve$,  $\lambda^k_t(f_0)/\lambda^k_t(f) \leq \ell_0 \log T$ for some $\ell_0 > 0$. Thus, with $\chi(x) = - \log x + x - 1$, we have}
\begin{align*}
    KL(f_0,f) - R_T &= \sum_k \Exz{ \mathds{1}_{\eve} \left(\int_{0}^T\lambda^k_t(f_0) \left(\log \left( \frac{ \lambda^k_t(f_0) }{\lambda^k_t(f)}\right) + \frac{\lambda^k_t(f)}{\lambda^k_t(f_0)} - 1\right) dt \right)} \\
    &= \sum_k \Exz{ \mathds{1}_{\eve} \left(\int_{0}^T\lambda^k_t(f_0) \chi\left( \frac{ \lambda^k_t(f) }{\lambda^k_t(f_0)}\right) dt \right)} \\
    &\leq \frac{ 4 \log (\ell_0 \log T )  }{  \ds{\min_k \inf_x \phi_k(x)}  } \sum_k \Exz{ \mathds{1}_{\eve} \int_{0}^T  (\lambda^k_t(f_0) - \lambda^k_t(f))^2 dt},
\end{align*}
since for any $r_T\in(0,1/2]$ and $x \geq r_T,$ we have $\chi(x) \leq 4 \log r_T^{-1} (x-1)^2$ (see the proof of Lemma 2 of \cite{donnet18:supplement}). Note that if $f \in B_\infty(\epsilon_T)$, $\forall t \in [0,T]$,  $\lambda^k_t(f) \geq \lambda^k_t(f_0)$ and  we obtain instead
\begin{align*}
KL(f_0, f) - R_T & \leq  \frac{1}{\ds{\min_k \inf_x \phi_k(x)} } \sum_k \Exz{ \mathds{1}_{\eve} \int_{0}^T (\lambda^k_t(f_0) - \lambda^k_t(f))^2 dt  }.
\end{align*}
Moreover, since $\phi_k$ is $L$-Lipschitz, under Assumption \ref{ass-psi},
\begin{align*}
    |\lambda^k_t(f_0) - \lambda^k_t(f)| &=   |\phi_k(\Tilde \lambda^k_t(\nu_0, h_0))  - \phi_k(\Tilde \lambda^k_t(\nu, h))| \leq  L |\Tilde \lambda^k_t(\nu_0, h_0) - \Tilde \lambda^k_t(\nu, h)| \\
    &\leq L |\nu_k - \nu_k^0| + L \sum_{l} \int_{t-A}^{t^-}|h_{lk} - h_{lk}^0|(t-s)dN^l_s,
\end{align*}
\ds{and in Case 2 of Proposition \ref{prop:relu}, we have 
\begin{align*}
    |\lambda^k_t(f_0, \theta_0) - \lambda^k_t(f, \theta)| &=   |\theta_k^0 + \phi_k(\Tilde \lambda^k_t(\nu_0, h_0)) - \theta_k - \phi_k\Tilde \lambda^k_t(\nu, h))| \leq  |\theta_k^0 - \theta_k|  + L |\Tilde \lambda^k_t(\nu_0, h_0) - \Tilde \lambda^k_t(\nu, h)| \\
    &\leq |\theta_k - \theta_k^0| + L |\nu_k - \nu_k^0| + L \sum_{l} \int_{t-A}^{t^-}|h_{lk} - h_{lk}^0|(t-s)dN^l_s.
\end{align*}}
Using the same computations as in the proof of Lemma 2 of \cite{donnet18:supplement}, we obtain 
$$\sum_k  \Exz{ \mathds{1}_{\eve} \left(\int_{0}^T (\lambda^k_t(f_0) - \lambda^k_t(f))^2 \right)dt  } \leq \gamma_0 T \left( |\nu_k - \nu_k^0|^2 + \sum_{l}\| h_{lk} - h_{lk}^0\|_2^2\right) \leq \gamma_0 T \epsilon_T^2,$$
or
$$\sum_k  \Exz{ \mathds{1}_{\eve} \left(\int_{0}^T (\lambda^k_t(f_0, \theta_0) - \lambda^k_t(f, \theta))^2 \right)dt  } \leq \gamma_0 T \left( \sum_k |\theta_k - \theta_k^0|^2 + |\nu_k - \nu_k^0|^2 + \sum_{l}\| h_{lk} - h_{lk}^0\|_2^2\right) \leq \gamma_0 T \epsilon_T^2,$$
with $\gamma_0 := \max(1, L)\left[3 + 6K \sum_k \left(A\Exz{\lambda^k_0(f_0)^2}+ \Exz{\lambda^k_0(f_0)}\right)\right]$. Consequently,
\begin{align}\label{eq:def_kappa_0}
KL(f_0, f) - R_T &\leq  \begin{cases}
\frac{ 4 \log (\ell_0 \log T )  }{\ds{\min_k \inf_x \phi_k(x)} }  \gamma_0 T \epsilon_T^2 & \text{if} \quad f \in B_2(\epsilon_T, B) \\
\frac{ \gamma_0}{\ds{\min_k \inf_x \phi_k(x)} } T \epsilon_T^2 & \text{if} \quad f \in B_\infty(\epsilon_T). 
\end{cases}
\end{align}
Therefore, $KL(f_0,f) \leq \kappa_1' (\log \log T) T \epsilon_T^2$,
with $\kappa_1' =  \frac{ 8 \gamma_0}{\ds{\min_k \inf_x \phi_k(x)} }$ if $f \in B_2(\epsilon_T,B)$ - or $KL(f_0,f) \leq \kappa_1 T \epsilon_T^2$ with $\kappa_1 = \frac{2}{\ds{\min_k \inf_x \phi_k(x)} }$  if $f \in B_\infty(\epsilon_T)$ .

\vspace{3mm}

\noindent
\textbf{Scenario 2:  Under Assumption \ref{ass-psi}(ii), i.e., $\phi_k > 0$, and $\log \phi_k$ and $\sqrt{ \phi_k}$ are $L_1$-Lipschitz, $L_1>0$.}

For $k \in [K]$, let $\Lambda^k(f) := \int_0^T  \lambda_t^k(f) dt$. Then for $t \in [0,T]$, we define
$$\alpha^k_t(f) = \frac{\lambda^k_t(f)}{\Lambda^k(f)}. $$
From \eqref{eq:def_kl_rt}, we have
\begin{align}
 KL(f_0, f) - R_T
&= \sum_k \Exz{ \mathds{1}_{\eve} \left(\int_0^T \lambda^k_t(f_0) \log \left( \frac{ \lambda^k_t(f_0) }{\lambda^k_t(f)}\right)dt+  \int_0^T (\lambda^k_t(f) - \lambda^k_t(f_0)) dt \right)} \nonumber\\ 
 &=  \sum_k \Exz{\mathds{1}_{\eve} \left( \Lambda_{A}^k(f_0) \int_{A^k(T)} \alpha^k_t(f_0) \log \left( \frac{ \alpha^k_t(f_0) }{\alpha^k_t(f)}\right)dt + \Lambda^k(f_0) \log \left(\frac{\Lambda^k(f_0)}{\Lambda^k(f)}\right) + (\Lambda^k(f)-\Lambda^k(f_0)) \right)} \nonumber \\
&\leq   \sum_k \Exz{\mathds{1}_{\eve} \left( \Lambda^k(f_0) \int_0^T \alpha^k_t(f_0) \log \left( \frac{ \alpha^k_t(f_0) }{\alpha^k_t(f)}\right)dt + \frac{( \Lambda^k(f_0 ) - \Lambda^k(f))^2 }{ \Lambda^k(f_0) } \right)} \nonumber,
\end{align}
where in the last inequality we have used that $\chi(x) \leq (x-1)^2$ for $x \geq 1/2$, with $x = \frac{\Lambda^k(f)}{\Lambda^k(f_0)}$. In fact, we have
\begin{align*}
    |\Lambda^k(f) - \Lambda^k(f_0)| &\leq   T L |\nu_k - \nu_k^0| + L  \sum_l \norm{h_{l k} - h^0_{l k}}_1 N^l[-A,T] \leq T L \epsilon_T (1 + 2  \max_l \mu_l^0 ),
\end{align*}
using that on $\eve$,
$
 N^l[-A,T] \leq  T \mu_l^0 + T \delta_T  \leq 2 T \mu_l^0.
$
Moreover, on $\eve$, using the notations of Section \ref{sec:proof_conc_g}, we have
\begin{align*}
    \Lambda^k(f_0) \geq \phi_k(\nu_k^0) \sum_{j=1}^{J_T-1} (U_j^{(1)} - \tau_j) \geq \phi_k(\nu_k^0) 
     \frac{T}{2\mathbb{E}_0[\Delta \tau_1] \|r_0\|_1}  =: y_0 T, 
\end{align*}
for some $y_0 > 0$. Similarly, for $f \in B_2(\epsilon_T,B)$ or $f \in B_\infty(\epsilon_T)$, we have
\begin{align*}
  \Lambda^k(f) \geq \phi_k(\nu_k) \sum_{j=1}^{J_T-1} (U_j^{(1)} - \tau_j) \geq \phi_k(\nu_k^0/2) 
     \frac{T}{2\mathbb{E}_0[\Delta \tau_1] \|r_0\|_1}.
\end{align*}
Consequently,
\begin{align*}
     \frac{1}{2} \leq 1 - \frac{|\Lambda^k(f) - \Lambda^k(f_0)|}{\Lambda_A(f_0)} \leq \frac{\Lambda^k(f)}{\Lambda^k(f_0)} \leq  1 + \frac{|\Lambda^k(f) - \Lambda^k(f_0)|}{\Lambda_A(f_0)}  \leq 1 + \frac{1 + 2A \max_l \mu_l^0 }{ y_0} \epsilon_T = 1 + O(\e_T),
\end{align*}
for $T$ large enough, and
\begin{align*}
   \frac{(\Lambda^k(f) - \Lambda^k(f_0))^2}{\Lambda^k(f_0)}  &\leq \frac{L^2 T^2\epsilon_T^2(1 + 2 \max_l \mu_l^0 )^2}{\Lambda^k(f_0)} \leq \frac{L^2 T \epsilon_T^2 (1 + 2A \max_l \mu_l^0 )^2}{y_0}.
\end{align*}
Additionally, on $\eve$, on the one hand, for $f \in B_2(\epsilon_T,B)$, we also have that for any $t \in [0,T]$, since $ \lambda_t^k(f_0) \leq \lambda_t^k(f) + \epsilon_T + B C_\beta \log T \implies \frac{\lambda_t^k(f_0)}{\lambda_t^k(f)} \leq M_0 \log T$ for some $M_0 > 0$, then
\begin{align*}
    \frac{\alpha^k_t(f_0)}{\alpha^k_t(f)} =  \frac{\lambda^k_t(f_0) \Lambda^k(f)}{\lambda^k_t(f) \Lambda^k(f_0)} \leq M_0 \log T \frac{\Lambda^k(f)}{\Lambda^k(f_0)} \leq M \log T + O(M_0 \log T \epsilon_T).
\end{align*}
Applying Lemma 8.7 from \cite{ghosal00}, we have, for any $M \geq M_0$,
\small
\begin{align*}
    \int_0^T \alpha^k_t(f_0) \log \left( \frac{ \alpha^k_t(f_0) }{\alpha^k_t(f)}\right)dt &\leq \log (M \log T) \int_{0}^T \left(\sqrt{\alpha^k_t(f_0)} - \sqrt{\alpha^k_t(f)}\right)^2dt .
\end{align*}
\normalsize
 Moreover,
\small
\begin{align*}
 \int_0^T \left(\sqrt{\alpha^k_t(f_0)} - \sqrt{\alpha^k_t(f)}\right)^2dt &\leq   \int_0^T \frac{1}{\Lambda^k(f_0)} \left(\sqrt{\lambda^k_t(f_0)} - \sqrt{\frac{\Lambda^k(f_0)}{\Lambda^k(f)}\lambda^k_t(f)}\right)^2dt \\
&\leq \frac{2}{\Lambda^k(f_0)}  \int_0^T  \left(\sqrt{\lambda^k_t(f_0)} - \sqrt{\lambda^k_t(f)}\right)^2dt  + \frac{1}{\Lambda^k(f_0)}  \int_0^T  \lambda^k_t(f) \left(1 - \sqrt{\frac{\Lambda^k(f_0)}{\Lambda^k(f)}}\right)^2dt \\
    &\lesssim \frac{1}{\Lambda^k(f_0)}  \int_0^T  \left(\sqrt{\lambda^k_t(f_0)} - \sqrt{\lambda^k_t(f)}\right)^2dt + \frac{(\Lambda^k(f) - \Lambda^k(f_0))^2}{\Lambda^k(f_0)^2}.
\end{align*}
\normalsize
On the other hand, if $f \in B_\infty(\epsilon_T)$, then $\lambda_t^k(f_0) \leq \lambda_t^k(f)$ and we have
\begin{align*}
       \int_0^T \alpha^k_t(f_0) \log \left( \frac{ \alpha^k_t(f_0) }{\alpha^k_t(f)}\right)dt \leq \frac{2}{\Lambda^k(f_0)}  \int_0^T \left(\sqrt{\lambda^k_t(f_0)} - \sqrt{\lambda^k_t(f)}\right)^2dt + \frac{4(\Lambda^k(f) - \Lambda^k(f_0))^2}{\Lambda^k(f_0)^2}.
\end{align*}
Moreover, in this case, 
\begin{align*}
    \int_0^T  \left(\sqrt{\lambda^k_t(f_0)} - \sqrt{\lambda^k_t(f)}\right)^2dt &=  \int_0^T  \left(\sqrt{\phi_k(\Tilde \lambda^k_t(\nu_0, h_0))} - \sqrt{\phi_k(\Tilde \lambda^k_t(\nu, h))}\right)^2dt \\
    &\leq L_1^2 \int_{A^k(T)}  \left(\Tilde \lambda^k_t(\nu_0, h_0) - \Tilde \lambda^k_t(\nu, h)\right)^2dt \lesssim T \epsilon_T^2.
\end{align*}
Finally, we obtain that
\begin{align*}
    KL(f_0, f) \lesssim \begin{cases} 
    (\log \log T) T \e_T^2 & \text{if} \quad f \in B_2(\epsilon_T, B) \\
 T \e_T^2  & \text{if} \quad f \in B_\infty(\epsilon_T)
\end{cases}.
\end{align*}

\noindent
\textbf{Scenario 3: Case 1 of Proposition \ref{prop:relu}, i.e., $\phi_k(x) = (x)_+, \forall k \in [K]$.} 

In a Hawkes model with the standard ReLU link function, we can obtain two types of rates, under and without condition \eqref{condA3}. We consider $f \in B_{\infty}(\e_T)$ so that $\forall t \in [0,T], \Tilde \lambda^k_t(\nu, h) \geq \Tilde \lambda^k_t(\nu_0, h_0)$. Since for any $t\in[0,T]$, $\log(\lambda^k_t(f_0)/\lambda^k_t(f))\leq 0$, we can use that
\begin{align*} 
KL(f_0, f) & \leq  \sum_k \Exz{ \int_0^T (\lambda^k_t(f) - \lambda^k_t(f_0)) dt } =  \sum_k \Exz{ \Lambda^k(f) - \Lambda^k(f_0) },
\end{align*}
with for any $1\leq k \leq K$,
$
     \Lambda^k(f) := \int_0^T \lambda^k_t(f) dt $, and $
     \Lambda^k(f_0) := \int_0^T \lambda^k_t(f_0) dt.$
Since for any $t$, $ \Tilde \lambda^k_t(\nu, h) \geq \Tilde \lambda^k_t(\nu_0, h_0)$, we have
\begin{align}\label{eq:bound_S_k_f}
   0\leq \Lambda^k(f) &- \Lambda^k(f_0)  = \int_0^T ((\Tilde \lambda^k_t(\nu, h))_+ - (\Tilde \lambda^k_t(\nu_0, h_0))_+ dt \leq  \int_0^T |\Tilde \lambda^k_t(\nu, h) - \Tilde \lambda^k_t(\nu_0, h_0)| dt \nonumber\\
    &\leq  T |\nu_k - \nu_k^0| + \sum_l \int_0^T \int_{t-A}^{t^-} |h_{l k} - h^0_{l k}|(t - s) dN^l_s dt \leq   T (\nu_k - \nu_k^0) + \sum_l \norm{h_{l k} - h^0_{l k}}_1 N^l[-A,T].
\end{align}
Consequently, we arrive at
\begin{align*}
KL(f_0, f) &\leq K T \epsilon_T (1 + \max_l \Exz{N^l[-A,T]}) + R_T \\
&\leq T\epsilon_T K (1 +  2 \max_l \mu_l^0 ) + o(T \epsilon_T^2) \lesssim T\epsilon_T.
\end{align*}

To refine this bound, we will assume that \eqref{condA3} holds. For $k \in [K]$ and $t \in [0,T]$, we define
$    p^k_t(f) =\lambda^k_t(f)/\Lambda^k(f) $
and similarly for $p^k_t(f_0)$. Using \eqref{eq:def_kl_rt}, we then have
\small
\begin{align}
KL(f_0, f) - R_T &=  \sum_k \Exz{\mathds{1}_{\eve} \left( \Lambda^k(f_0) \int_0^T \mathds{1}_{\lambda^k_t(f_0)>0} p^k_t(f_0) \log \left( \frac{ p^k_t(f_0) }{p^k_t(f)}\right)dt + \Lambda^k(f_0) \log \left(\frac{\Lambda^k(f_0)}{\Lambda^k(f)}\right) + (\Lambda^k(f)-\Lambda^k(f_0)) \right)} \nonumber \\
&\leq   \sum_k \Exz{\mathds{1}_{\eve} \left( \Lambda^k(f_0) \int_0^T \mathds{1}_{\lambda^k_t(f_0)>0} p^k_t(f_0) \log \left( \frac{ p^k_t(f_0) }{p^k_t(f)}\right)dt + \frac{( \Lambda^k(f_0 ) - \Lambda^k(f))^2 }{ \Lambda^k(f_0) } \right)} \label{eq:KL_decomp_2},
\end{align}
\normalsize
where in the last inequality, we have used the fact that $-\log x + x - 1 \leq (x-1)^2$ for $x \geq 1/2$, with $x = \frac{ \Lambda^k(f)}{ \Lambda^k(f_0)} \geq 1$. Moreover, from \eqref{eq:bound_S_k_f}, we have on $\eve$,
\begin{align*}
    \Lambda^k(f) - \Lambda^k(f_0) &\leq T \epsilon_T (1 + 2  \max_l \mu_l^0 ).
\end{align*}
 Besides, on $\eve$, using $A_2(T)$ defined in \eqref{def:A2} and noting that in this case, $r_k^0 = \nu_k^0, \forall k$,
\begin{align*}
    \Lambda^k(f_0) &\geq \int_{A_2(T)}  \lambda^k_t(f_0) dt \geq \sum_{j=1}^{J_T-1} \int_{\tau_{j}}^{U_j^{(1)}}  \lambda^k_t(f_0) dt = \nu_k^0 \sum_{j=1}^{J_T-1} (U_j^{(1)} - \tau_j) \\
    &\geq \frac{  \nu_k^0 T}{\mathbb{E}_0(\Delta \tau_1) \|\nu_0\|_1} \left(1 - 2c_\beta \sqrt{\frac{\log T}{T}}\right) \geq \frac{\nu_k^0 T}{2\mathbb{E}_0(\Delta \tau_1) \|\nu_0\|_1}. 
\end{align*}
Therefore,
\begin{align}
    \Lambda^k(f_0) \leq \Lambda^k(f) &\leq \Lambda^k(f_0) + T \epsilon_T(1 + 2 \max_l \mu_l^0 ) \nonumber \\
  &\leq \Lambda^k(f_0) + \frac{2\Lambda^k(f_0) (1 + 2A \max_l \mu_l^0 ) \mathbb{E}_0(\Delta \tau_1) \|\nu_0\|_1}{\nu_k^0} \epsilon_T \nonumber \\
    &\leq \Lambda^k(f_0) \left(1 + \frac{2(1 + 2A \max_l \mu_l^0 )\mathbb{E}_0(\Delta \tau_1) \|\nu_0\|_1}{\nu_k^0 } \epsilon_T \right) \leq 2\Lambda^k(f_0) \label{eq:maj_Sk} ,
\end{align}
for $T$ large enough. Besides, this implies that $p^k_t(f) = \frac{\lambda^k_t(f)}{\Lambda^k(f)} \geq \frac{\lambda^k_t(f_0)}{2\Lambda^k(f_0)} \geq p^k_t(f_0)/2$. Using again the inequality $-\log x + x -1 \leq (x - 1)^2$ with $x = \frac{p^k_t(f)}{p^k_t(f_0)} \geq \frac{1}{2}$ and the fact that $\int_0^T p^k_t(f) dt = \int_0^T p^k_t(f_0) dt = 1$, we have
\small
\begin{align*}
    &\int_0^T \mathds{1}_{\lambda^k_t(f_0)>0} p^k_t(f_0) \log \left( \frac{ p^k_t(f_0) }{p^k_t(f)}\right)dt = \int_0^T  p^k_t(f_0) \log \left( \frac{ p^k_t(f_0) }{p^k_t(f)}\right)dt + \int_0^T (p^k_t(f) - p^k_t(f_0)) dt \nonumber \\
    &= \int_0^T  p^k_t(f_0) \left(\log \left( \frac{ p^k_t(f_0) }{p^k_t(f)}\right) + \frac{p^k_t(f)}{p^k_t(f_0)} - 1 \right)dt \leq \int_0^T \mathds{1}_{\lambda^k_t(f_0)>0} \frac{ (p^k_t(f_0) -p^k_t(f))^2}{p^k_t(f_0)}dt \nonumber \\
     &\leq \frac{1}{\Lambda^k(f_0)} \int_0^T \mathds{1}_{\lambda^k_t(f_0)>0} \frac{ 2 \left(\lambda^k_t(f_0) - \lambda^k_t(f)\right)^2 + 2 \lambda^k_t(f)^2 \left(1 -\frac{\Lambda^k(f_0)}{\Lambda^k(f)} \right)^2}{\lambda^k_t(f_0)} dt \nonumber \\
    &\leq \frac{2}{\Lambda^k(f_0)} \left[\int_0^T \mathds{1}_{\lambda^k_t(f_0)>0} \frac{3\left(\lambda^k_t(f_0) - \lambda^k_t(f)\right)^2}{\lambda^k_t(f_0)} + 2\Lambda^k(f_0) \times \frac{(\Lambda^k(f) - \Lambda^k(f_0))^2}{\Lambda^k(f)^2}\right] \nonumber \\
    &\leq \frac{6}{\Lambda^k(f_0)} \int_0^T \mathds{1}_{\lambda^k_t(f_0)>0} \frac{ 2 \left(\lambda^k_t(f_0) - \lambda^k_t(f)\right)^2}{\lambda^k_t(f_0)} dt + 4 \frac{(\Lambda^k(f) - \Lambda^k(f_0))^2}{\Lambda^k(f_0)^2}. 
\end{align*}
\normalsize
In the previous inequalities, we have used $\Lambda^k(f_0) \leq \Lambda^k(f)$,
and for $T$ large enough, we have the following intermediate result:
\begin{align}
    KL(f_0,f) - R_T &\leq  \sum_k \Exz{ \mathds{1}_{\eve} \left( 6\int_{0}^T \mathds{1}_{\lambda^k_t(f_0)>0}\frac{ (\lambda^k_t(f_0) -  \lambda^k_t(f))^2}{  \lambda^k_t(f_0)}dt + 4  \frac{( \Lambda^k(f_0 ) - \Lambda^k(f))^2 }{ \Lambda^k(f_0) } \right)} .\label{KL-int}
\end{align}

Moreover, on $\eve$, using \eqref{eq:maj_Sk} 
\begin{align*}
    \Lambda^k(f_0) &= \int_0^T \left( \nu^0_k + \sum_l \int_{t-A}^{t^-} h^0_{lk}(t-s) dN^l_s\right)_+ dt  \leq T \nu_k^0 + \sum_l \|h^{0+}_{lk}\|_1 N^l[-A,T) \\
    &\leq T \nu_k^0 + \frac{3}{2}T \sum_l \|h^{0+}_{lk}\|_1 (\mu^0_l + \delta_T) \leq 2  T \left(\nu_k^0 + \sum_l \|h^{0+}_{lk}\|_1 \mu^0_l \right),
\end{align*}
for T large enough, since $\delta_T = \delta_0 \sqrt{\frac{\log T}{T}}$. Thus,
\begin{align*}
     \frac{( \Lambda^k(f_0 ) - \Lambda^k(f))^2 }{ \Lambda^k(f_0) } &\leq \Lambda^k(f_0) \left(\frac{2(1 + 2A \max_l \mu_l^0 )\mathbb{E}_0(\Delta \tau_1) \|\nu_0\|_1}{\nu_k^0} \right)^2 \epsilon_T^2 \leq = c_2^0 T \epsilon_T^2,
\end{align*}
with
\begin{align*}
c_2^0 =  8\left(\nu_k^0 + \sum_l \|h^{0+}_{lk}\|_1 \mu^0_l \right) \left(\frac{(1 + 2A \max_l \mu_l^0 )\mathbb{E}_0(\Delta \tau_1) \|\nu_0\|_1}{\nu_k^0} \right)^2 .
\end{align*}
Therefore, reporting into \eqref{KL-int} we have 
\begin{align*}
    KL(f_0,f) - R_T &\leq 6\sum_k \Exz{ \mathds{1}_{\eve}  \int_{0}^T \mathds{1}_{\lambda^k_t(f_0)>0}\frac{ (\lambda^k_t(f_0) -  \lambda^k_t(f))^2}{  \lambda^k_t(f_0)}dt} + 4 K c_2^0 T \epsilon_T^2.
\end{align*}
We now bound the first term on the RHS of the previous equation.
\small
\begin{align*}
\sum_k \Exz{ \mathds{1}_{\eve}  \int_{0}^T \mathds{1}_{\lambda^k_t(f_0)>0}\frac{ (\lambda^k_t(f_0) -  \lambda^k_t(f))^2}{  \lambda^k_t(f_0)}dt} \leq  \sum_k \mathbb{E}_{ 0} \left[ \mathds{1}_{\Omega_{\eve}} \sup_{t \in [0,T]} \mathds{1}_{\lambda^k_t(f_0)>0}(\lambda^k_t(f) - \lambda^k_t(f_0))^2  \int_{0}^{T} \frac{\mathds{1}_{\lambda^k_t(f_0)>0}}{\lambda^k_t(f_0)} dt \right].
\end{align*}
\normalsize
Moreover, for any $k \in [K]$ and $t \in [0,T]$, we have on $B_\infty(\epsilon_T)$
\begin{align} 
\mathds{1}_{\eve} \mathds{1}_{\lambda^k_t(f_0)>0}(\lambda^k_t(f) - \lambda^k_t(f_0))^2 dt   &\leq  2 (\nu_k - \nu_k^0)^2 + 2 K \max_l \|h_{lk} - h^0_{lk}\|_\infty^2  \sup_{t \in [0, T]}N^{l}[t-A,t)^2 \nonumber \\
&\leq 2 \e_T^2+ 2K  C_\beta^2 \log^2 T \e_T^2 \leq 4K  C_\beta^2 \log^2 T \e_T^2 \nonumber.
\end{align}
Consequently,
\begin{align*}
\sum_k \Exz{ \mathds{1}_{\eve}  \int_{0}^T \mathds{1}_{\lambda^k_t(f_0)>0}\frac{ (\lambda^k_t(f_0) -  \lambda^k_t(f))^2}{  \lambda^k_t(f_0)}dt} &\leq 4C_\beta^2K  (\log T)^2 T \e_T^2  \sum_k \mathbb{E}_{ 0} \left[ \frac{1}{T} \int_{0}^{T} \frac{\mathds{1}_{\lambda^k_t(f_0)>0}}{\lambda^k_t(f_0)} dt \right] \\
&=  4C_\beta^2 c_1^0 K  (\log T)^2 T \e_T^2, 
\end{align*}
using \eqref{condA3}, with
\begin{align*}
c_1^0 := \lim \sup \limits_{T \to \infty} \mathbb{E}_{ 0} \left[ \frac{1}{T} \int_{0}^{T} \frac{\mathds{1}_{\lambda^k_t(f_0)>0}}{\lambda^k_t(f_0)} dt \right] < +\infty.
\end{align*} 

Consequently, reporting into \eqref{KL-int}, we finally obtain
\begin{align*}
  KL(f_0,f) &\leq  4C_\beta^2 c_1^0 K L (\log T)^2 T \e_T^2 + 4K c_2^0 T \epsilon_T^2 + o(T\epsilon_T^2) \nonumber \\
  &\leq 8 K C_\beta^2 c_1^0(\log T)^2 T \e_T^2 = \kappa_2  (\log T)^2 T \e_T^2,
\end{align*}
with $\kappa_2 :=  8 K C_\beta^2 c_1^0$, which terminates the proof of this lemma.

\end{proof} 

\subsection{Deviations on the log likelihood ratio: Lemma \ref{lem:t_j} } \label{sec:lemTj}

The next lemma is a control under $\mathbb{P}_0$ over the centered sum of i.i.d. variables that are used to decompose the log-likelihood ratio in Lemma \ref{lem:KL}.

\begin{lemma}\label{lem:t_j}
Under the assumptions of Lemma \ref{lem:KLdecomp}, for $f \in B_\infty(\epsilon_T)$ and $j \geq 1$, let
\begin{align}\label{def_tj}
    T_j := \sum_k\int_{\tau_j}^{\tau_{j+1}}\log \left(\frac{\lambda^k_t(f_0)}{\lambda^k_t(f)} \right) dN_t^k - \int_{\tau_j}^{\tau_{j+1}} (\lambda^k_t(f_0) - \lambda^k_t(f)) dt.
\end{align}
Then it holds that $\Exz{T_j^2} \lesssim z_T / T$, with
\begin{align*}
    &z_T =  \begin{cases}
    T \epsilon_T^2 & \text{(under Assumption \ref{ass-psi}(i)} \\
    (\log T) T \epsilon_T^2 & \text{(under Assumption \ref{ass-psi}(ii))}\\
    (\log T)^2 T \epsilon_T^2 & \text{(ReLU link)}
    \end{cases}
\end{align*}
Moreover, if $\log^3 T = O(z_T)$,
\begin{align*}
    \Probz{\sum_{j=0}^{J_T-1} T_j  - \Exz{T_j} \geq z_T} = o(1).
\end{align*}
\end{lemma}

\begin{remark}
Under Assumption \ref{ass-psi}, for $f \in B_2(\epsilon_T,B)$, we also obtain similar results with $z_T =  (\log \log T)^2 T \e_T^2$.
\end{remark}

\begin{proof}

Firstly, using the fact that $\tau_1, \tau_2$ are stopping times, we have
\small
\begin{align}\label{control-variance}
     \Exz{T_1^2} &= \Exz{\left(\sum_k \int_{\tau_{1}}^{\tau_2}\log \left(\frac{\lambda^k_t(f_0)}{\lambda^k_t(f)} \right) dN_t^k - \int_{\tau_{1}}^{\tau_2}(\lambda^k_t(f_0) - \lambda^k_t(f)) dt \right)^2} \nonumber\\
     &\lesssim \sum_k \Exz{\left(\int_{\tau_{1}}^{\tau_2}\log \left(\frac{\lambda^k_t(f_0)}{\lambda^k_t(f)} \right) \lambda_t^k(f_0)dt + \int_{\tau_{1}}^{\tau_2}\log \left(\frac{\lambda^k_t(f_0)}{\lambda^k_t(f)} \right) (dN_t^k - \lambda_t^k(f_0)dt) - \int_{\tau_{1}}^{\tau_2}(\lambda^k_t(f_0) - \lambda^k_t(f)) dt \right)^2} \nonumber \\
      &\lesssim  \Exz{ \Delta \tau_1 \int_{\tau_{1}}^{\tau_2} \chi\left(\frac{\lambda^k_t(f)}{\lambda^k_t(f_0)} \right)^2 \lambda^k_t(f_0)^2dt } +  \Exz{\int_{\tau_{1}}^{\tau_2}\log^2  \left(\frac{\lambda^k_t(f_0)}{\lambda^k_t(f)} \right) \lambda_t^k(f_0) dt},
\end{align}
\normalsize
with $\chi(x) = - \log x + x - 1$. For any $x > 0$, we have $\chi^2(x) \leq 2 \log^2x + 2 (x-1)^2$. Now, if $f \in B_\infty(\epsilon_T)$, using that $\log^2x \leq (x-1)^2$ for $x = \lambda^k_t(f)/ \lambda^k_t(f_0)\geq 1$, we have $\chi\left(\frac{\lambda^k_t(f)}{\lambda^k_t(f_0)}\right)^2 \lambda^k_t(f_0)^2 \lesssim (\lambda^k_t(f_0) - \lambda^k_t(f))^2$ and $\log^2\left(\frac{\lambda^k_t(f)}{\lambda^k_t(f_0)}\right) \lambda^k_t(f_0) \lesssim \frac{(\lambda^k_t(f_0) - \lambda^k_t(f))^2}{\lambda^k_t(f_0)}$. Therefore, \eqref{control-variance} becomes
\begin{align}\label{control-variance-0}
    \Exz{T_1^2} &\lesssim \Exz{ \Delta \tau_1 \int_{\tau_{1}}^{\tau_2} (\lambda^k_t(f_0) - \lambda^k_t(f))^2 dt } + \Exz{\mathds{1}_{\eve^c}\int_{\tau_{1}}^{\tau_2}\log^2  \left(\frac{\lambda^k_t(f_0)}{\lambda^k_t(f)} \right) \lambda_t^k(f_0) dt} \\
    &+  \Exz{\mathds{1}_{\eve}\int_{\tau_{1}}^{\tau_2} \mathds{1}_{\lambda^k_t(f_0) > 0} \frac{(\lambda^k_t(f_0) - \lambda^k_t(f))^2}{\lambda^k_t(f_0)} dt}. \nonumber
\end{align}
\ds{With the ReLU link function, we can easily bound the third term on the RHS of \eqref{control-variance-0} using \eqref{condA3}:}
\begin{align*}
    \Exz{\mathds{1}_{\eve}\int_{\tau_{1}}^{\tau_2}\mathds{1}_{\lambda^k_t(f_0) > 0} \frac{(\lambda^k_t(f_0) - \lambda^k_t(f))^2}{\lambda^k_t(f_0)} dt} \lesssim \log^2 T \e_T^2 \Exz{\int_{\tau_{1}}^{\tau_2} \frac{\mathds{1}_{\lambda^k_t(f_0) > 0}}{\lambda^k_t(f_0)} dt} \lesssim \log^2 T \e_T^2.
\end{align*}
For the second term on the RHS of \eqref{control-variance-0}, using that $\log^2 (\lambda^k_t(f)) \lambda_t^k(f) \lesssim (\sup_{t} N[t-A,t))^3$ and similarly for $\lambda_t^k(f_0)$, we have 
\small
\begin{align*}
     \Exz{\mathds{1}_{\eve^c}\int_{\tau_{1}}^{\tau_2}\log^2  \left(\frac{\lambda^k_t(f_0)}{\lambda^k_t(f)} \right) \lambda_t^k(f_0) dt} &\lesssim  \Exz{\mathds{1}_{\eve^c}\int_{\tau_{1}}^{\tau_2}\log^2 (\lambda^k_t(f_0)) \lambda_t^k(f_0) dt} + \Exz{\mathds{1}_{\eve^c}\int_{\tau_{1}}^{\tau_2}\log^2 (\lambda^k_t(f)) \lambda_t^k(f) dt} \\
     &\lesssim \sqrt{\Exz{\mathds{1}_{\eve^c} (\sup_{t} N[t-A,t))^6}} \sqrt{\Exz{\Delta \tau_1^2}} \lesssim T^{-\beta/2} = o(\e_T^2),
\end{align*}
\normalsize
using Lemma \ref{lem:main_event}. For the first term on the RHS of \eqref{control-variance-0}, we have
\small
\begin{align*}
      \Exz{ \Delta \tau_1 \int_{\tau_1}^{\tau_2} (\lambda^k_t(f_0) - \lambda^k_t(f))^2 dt } &\lesssim \Exz{ \Delta \tau_1 \int_{\tau_1}^{\tau_2} (\Tilde \lambda^k_t(f_0) - \Tilde\lambda^k_t(f))^2 dt }  \\
      &\leq \Exz{\Delta \tau_1 \int_{\tau_1}^{\tau_{2}} (2 |\nu_k - \nu_k^0|^2 + 2K \sum_{l=1}^{K} \left(\int_{t-A}^t (h_{lk} - h_{lk}^0)(t-s) dN^l_s\right)^2 dt} \\
     &\leq 2 |\nu_k - \nu_k^0|^2 \Exz{\Delta \tau_1^2} + 2 K \sum_{l=1}^{K} \Exz{\Delta \tau_1 \int_{\tau_1}^{\tau_{2}} N^l(t-A, t) \int_{t-A}^t (h_{lk} - h_{lk}^0)^2 (t-s)dN^l_sdt} \\
    &= 2 |\nu_k - \nu_k^0|^2 \Exz{\Delta \tau_1^2} + 2 K \sum_{l=1}^{K} \norm{h_{lk} - h_{lk}^0}_2^2 \Exz{\Delta \tau_1 N^l[\tau_1,\tau_{2})^2} \\
    &\leq  2 |\nu_k - \nu_k^0|^2 \Exz{\Delta \tau_1} + 2 K \sum_{l=1}^{K} \norm{h_{lk} - h_{lk}^0}_2^2 \sqrt{\Exz{N^l[\tau_1, \tau_2)^4}} \sqrt{\Exz{\Delta \tau_1^2}}
    \lesssim \e_T^2.
\end{align*}
\normalsize
Thus, reporting into \eqref{control-variance-0}, we can conclude that if \eqref{condA3} holds, $\Exz{T_1^2} \lesssim \log^2 T \e_T^2$.

Under Assumption \ref{ass-psi}(i), if $f \in B_\infty(\epsilon_T)$, we can use the same computations. If $f \in B_2(\epsilon_T,B)$, for the first term on the RHS of \eqref{control-variance-0} and for the second term, we use instead that $\log^2 x \leq 4 \log^2(r_T^{-1}) (x-1)^2$ for $x \geq r_T$ with $x = \frac{\lambda^k_t(f_0)}{\lambda^k_t(f)} \gtrsim r_T:=(\log T)^{-1}$ and we obtain,
\begin{align*}
     & \Exz{\mathds{1}_{\eve} \int_{\tau_{1}}^{\tau_2}\log^2  \left(\frac{\lambda^k_t(f_0)}{\lambda^k_t(f)} \right) \lambda_t^k(f_0) dt} \lesssim (\log \log T)^2  \Exz{ \int_{\tau_1}^{\tau_2} (\lambda^k_t(f_0) - \lambda^k_t(f))^2 dt } \\
    & \quad \lesssim (\log \log T)^2  \Exz{ \int_{\tau_1}^{\tau_2} (\Tilde \lambda^k_t(\nu_0, h_0) - \Tilde\lambda^k_t(\nu, h))^2 dt } \lesssim (\log \log T)^2 \e_T^2,
\end{align*}
\ds{or, in the shifted ReLU model with unknown link (Case 2 of Proposition \ref{prop:relu}),}
\begin{align*}
     & \Exz{\mathds{1}_{\eve} \int_{\tau_{1}}^{\tau_2}\log^2  \left(\frac{\lambda^k_t(f_0, \theta_0)}{\lambda^k_t(f, \theta)} \right) \lambda_t^k(f_0, \theta_0) dt}  \\
     &\lesssim (\log \log T)^2  \left[ \Exz{\Delta \tau_1} (\theta_k - \theta_k^0)^2 + \Exz{ \int_{\tau_1}^{\tau_2} (\Tilde \lambda^k_t(\nu_0, h_0) - \Tilde\lambda^k_t(\nu, h))^2 dt }\right] \lesssim (\log \log T)^2 \e_T^2,
\end{align*}
using similar computations to the control of the first term of \eqref{control-variance-0}. The remaining term $\Exz{\mathds{1}_{\eve^c} \int_{\tau_{1}}^{\tau_2}\log^2  \left(\frac{\lambda^k_t(f_0)}{\lambda^k_t(f)} \right) \lambda_t^k(f_0) dt}$ is bounded as the second term of \eqref{control-variance-0}.

Finally, under Assumption \ref{ass-psi}(ii), using the fact that $\log \phi_k $ $L_1$-Lipschitz for any $k$, we have
\begin{align*}
   & \Exz{\int_{\tau_{1}}^{\tau_2}\log^2  \left(\frac{\lambda^k_t(f_0)}{\lambda^k_t(f)} \right) \lambda_t^k(f_0) dt } 
  \lesssim  \Exz{\int_{\tau_{1}}^{\tau_2} (\Tilde \lambda^k_t(\nu_0, h_0) - \Tilde \lambda^k_t(\nu , h))^2 \lambda_t^k(f_0)dt}\\
  & \quad \lesssim \log T \Exz{\int_{\tau_{1}}^{\tau_2} (\Tilde \lambda^k_t(\nu_0, h_0) - \Tilde \lambda^k_t(\nu, h))^2dt} + \Exz{\mathds{1}_{\eve^c}\int_{\tau_{1}}^{\tau_2} (\Tilde \lambda^k_t(f_0) - \Tilde \lambda^k_t(f))^2 \lambda_t^k(f_0)dt}\\
  &\qquad \lesssim (\log T) \e_T^2,
\end{align*}
and the first term of \eqref{control-variance} can be bounded similarly.

We now prove the second part of the lemma. We first note that
\begin{align}\label{eq:control_lt}
 \Probz{\sum_{j=0}^{J_T-1} T_j  - \Exz{T_j} \geq  z_T}  &\leq \sum_{J\in {\mathcal J}_T}{\mathbb P}_0\left( \sum_{j=0}^{J-1} T_j - \Exz{T_j} \geq  z_T\right) +{\mathbb P}_0\left(\eve^c\right) \nonumber\\
 &\leq T {\mathbb P}_0\left( \sum_{j=0}^{J-1} T_j - \Exz{T_j} \geq  z_T\right) + o(1).
\end{align}
Let $J \in \mathcal{J}_T$. Since the $\{T_j\}_{1 \leq j \leq J}$ are i.i.d.. random variables, we apply Fuk-Nagaev inequality (see Proposition \ref{fuk-nagaev}) to the sum of centered variables $T_j-{\mathbb E}[T_j]$ with $\lambda := z_T$ and $x := x_T$ with $x_T \to \infty$ a sequence determined later. We denote $v := J \Exz{T_1^2} \leq T \Exz{T_1^2} \lesssim z_T$.
Hence, we have $x \lambda/ v = x_T  z_T /v \gtrsim x_T$.
Since $x_T \to \infty$, 
\begin{align*}
&\left(1 + \frac{x \lambda}{v}\right) \log \left(1 + \frac{x \lambda}{v}\right) - \frac{x \lambda}{v} \geq \frac{x_T \lambda}{v}.
\end{align*}

From Fuk-Nagaev inequality, we have
\begin{align}\label{eq:fuk-nagaev}
{\mathbb P}_0\left( \sum_{j=1}^J(T_j-{\mathbb E}[T_j])\geq  z_T \right)
&\leq J \Probz{T_1 -{\mathbb E}[T_1] \geq x_T} + \exp \left \{- \frac{z_T}{x_T} \right \}.
\end{align}
We note that in the second term on the RHS of \eqref{eq:fuk-nagaev}, if $\frac{z_T}{x_T} \geq x_0 \log T$ with $x_0 > 0$ large enough, then $\exp \left \{- \frac{ z_T}{x_T} \right \} = o(\frac{1}{T})$. Since by assumption, $\log T = o(T \epsilon_T^2)$, then we can choose  $x_T = x_0' \frac{z_T}{\log T} \to \infty$ with $x_0' > 0$ a constant small enough. For the first term on the RHS of \eqref{eq:fuk-nagaev}, let us consider $j \in [J]$. From \eqref{def_tj}, we have 
\begin{align*}
    T_1 \leq \sum_k \left \{ \int_{\tau_1}^{\tau_{2}} |\lambda^k_t(f) - \lambda^k_t(f_0)| dt +  \int_{[\tau_1, \tau_{2})} |\log \lambda^k_{t}(f) - \log \lambda^k_{t}(f_0)| dN_t^k\right \}.
\end{align*}
Using the first part of the lemma and Cauchy-Schwarz inequality, we have that $\Exz{T_1} \leq \sqrt{\frac{z_T}{T}}\leq x_T$ since $x_T \gtrsim z_T / \log T$ and $\log^3T = O(z_T)$. Therefore,
\small
\begin{align*}
    \Probz{T_1 - \Exz{T_1} \geq x_T} \leq \Probz{\eve \cap \left \{\int_{\tau_1}^{\tau_{2}} |\lambda^k_t(f) - \lambda^k_t(f_0)| dt + \int_{[\tau_1, \tau_{2})} |\log \lambda^k_{t}(f) - \log \lambda^k_{t}(f_0)|  \geq x_T \right \}} + \Probz{\eve^c}.
\end{align*}
\normalsize
On the one hand, on $\eve$, \ds{under Assumption \ref{ass-psi}(i)}, using that $|\log x - \log y| \leq \frac{|x-y|}{y} $ for $x \geq y$,

\begin{align*}
\int_{[\tau_1, \tau_{2})}  |\log \lambda^k_{t}(f) -  \log  \lambda^k_{t}(f_0)|dN_t^k &\leq \frac{2}{ \ds{\min_k \inf_x \phi_k(x)}}\int_{[\tau_1, \tau_{2})} |\log \lambda^k_{t}(f) -  \log  \lambda^k_{t}(f_0)|dN_t^k\\
        & \leq \frac{2L N[\tau_1, \tau_{2}) }{\ds{\min_k \inf_x \phi_k(x)}}   |\nu_k - \nu_k^0| + \frac{2L}{\ds{\min_k \inf_x \phi_k(x)}}  \int_{[\tau_1, \tau_{2})}\int_{[\tau_1, \tau_{2})} |h_{lk} - h_{lk}^0|(t - s) dN_t^kdN_s^k\\
        &\leq \frac{4L}{\ds{\min_k \inf_x \phi_k(x)}} (\epsilon_T N[\tau_1, \tau_{2}) +  N[\tau_1, \tau_{2})^2 \norm{h_{lk} - h_{lk}^0}_\infty) \leq 3 LB N[\tau_1, \tau_{2})^2,
\end{align*}
for $T$ large enough. \ds{In Case 2 of Proposition \ref{prop:relu}, we similarly have}
\begin{align*}
\int_{[\tau_1, \tau_{2})}  |\log \lambda^k_{t}(f) -  \log  \lambda^k_{t}(f_0)|dN_t^k &\leq \frac{2}{\theta_k^0}\int_{[\tau_1, \tau_{2})} |\log \lambda^k_{t}(f) -  \log  \lambda^k_{t}(f_0)|dN_t^k\\
        & \leq \frac{2N[\tau_1, \tau_{2}) }{\theta_k^0}   (|\theta_k - \theta_k^0| + |\nu_k - \nu_k^0|) + \frac{2}{\theta_k^0}  \int_{[\tau_1, \tau_{2})}\int_{[\tau_1, \tau_{2})} |h_{lk} - h_{lk}^0|(t - s) dN_t^kdN_s^k\\
        &\leq \frac{4}{\theta_k^0}\epsilon_T N[\tau_1, \tau_{2}) + 2  N[\tau_1, \tau_{2})^2 \norm{h_{lk} - h_{lk}^0}_\infty \leq 3 B N[\tau_1, \tau_{2})^2,
\end{align*}

\ds{Under Assumption \ref{ass-psi}(ii), $\log \phi_k$ is $L_1$-Lipschitz, therefore,}
\begin{align*}
         \sum_{t_i \in [\tau_1, \tau_{2})} |\log \lambda^k_{t_i}(f) - \log \lambda^k_{t_i}(f_0)| &\leq L_1\sum_{t_i \in [\tau_1, \tau_{2})} | \Tilde \lambda^k_{t_i}(\nu, h) - \Tilde \lambda^k_{t_i}(\nu_0, h_0)| \leq L_1 B N[\tau_1, \tau_{2})^2.
\end{align*}

\ds{With the ReLU link function, we directly have that}
$
T_1 \leq \sum_k \int_{\tau_1}^{\tau_{2}} (\lambda^k_t(f) - \lambda^k_t(f_0)) dt.
$

\ds{In  Case 2 of Proposition \ref{prop:relu},
\begin{align*}
\int_{\tau_1}^{\tau_{2}} & |\lambda^k_t(f, \theta) - \lambda^k_t(f_0, \theta_0)| dt
\leq   |\theta_k^0 - \theta_k| \Delta \tau_1 + \int_{\tau_1}^{\tau_{2}} (\Tilde \lambda^k_t(\nu, h) - \Tilde\lambda^k_t(\nu_0, h_0)) dt \\
&\leq (|\theta_k^0 - \theta_k| + |\nu_k - \nu_k^0|) \Delta \tau_1  +  \sum_l \norm{h_{lk} - h_{lk}^0}_1 N^l[\tau_1, \tau_2) \leq [2\Delta \tau_1 + N[\tau_1, \tau_{2})]\epsilon_T .
\end{align*}
and in all other cases,
\begin{align*}
\int_{\tau_1}^{\tau_{2}} & |\lambda^k_t(f) - \lambda^k_t(f_0)| dt
\leq  L \int_{\tau_1}^{\tau_{2}} (\Tilde \lambda^k_t(\nu, h) - \Tilde\lambda^k_t(\nu_0, h_0)) dt \\
&\leq  L |\nu_k - \nu_k^0|) \Delta \tau_1  + L \sum_l \norm{h_{lk} - h_{lk}^0}_1 N^l[\tau_1, \tau_2) \leq L [2\Delta \tau_1 + N[\tau_1, \tau_{2})]\epsilon_T .
\end{align*}}
Consequently,
\begin{align*}
    T_1 \leq K C[2\Delta \tau_1 + N[\tau_1, \tau_2)]\epsilon_T  + 3 K C B N[\tau_1, \tau_{2})^2 \leq 4 K C B N[\tau_1, \tau_{2})^2,
\end{align*}
\ds{with $C=\max(1,L,L_1)$ or $C=\max(1,L)$ depending on the assumptions on the link functions, and}
\begin{align*}
\Probz{T_1 - {\mathbb E}_0[T_1] \geq 2x_T} \leq \Probz{N[\tau_1, \tau_{2})^2 > \frac{x_T}{2KCB}}.
\end{align*}

Using Lemma \ref{lem:excursions}, we have for some $s > 0$
\begin{align*}
     \Probz{N[\tau_1, \tau_{2})^2 > \frac{x_T}{2KCB}} \leq \Exz{e^{s N[\tau_1, \tau_2)}} e^{-s \sqrt{x_T /(2KCB)}} = o(T^{-2}),
\end{align*}
if $x_T \geq x_0'' \log^2 T$ for some $x_0''>0$ large enough, implying that $z_T \geq z_0 \log^3 T$ for some $z_0>0$. Finally, reporting into \eqref{eq:control_lt}, we can conclude that 
\begin{align*}
    {\mathbb P}_0\left( \sum_{j=1}^{J_T}(T_j-{\mathbb E}[T_j])\geq  z_T \right)
&\leq T^2 \Probz{T_1 -{\mathbb E}[T_1] \geq x_T} + T \Probz{\eve^c} + T \exp \left \{- \frac{z_T}{x_T} \right \} + o(1) = o(1).
\end{align*}

\end{proof}




\section{Proofs of our results related identifiability and to the regenerative properties of nonlinear Hawkes models}\label{app:proof_technical_lemmas}


\subsection{Proofs of Proposition \ref{lem:identif_param}, Proposition \ref{lem:identif_param2} and Lemma  \ref{lem:hawkes_exciting}}\label{sec:proof:lem:existence}

In this section, we prove our two propositions on the model identifiability, i.e., Propositions \ref{lem:identif_param} and \ref{lem:identif_param2}, as well as Lemma \ref{lem:hawkes_exciting} in the mutually-exciting Hawkes model. We recall the results in each case.

%
%
%
%
%
%



\begin{proposition}[Proposition \ref{lem:identif_param}]
 Let $N$ be a  nonlinear Hawkes process  as defined in \eqref{def:NLintensity} with link functions $(\phi_k)_k$ and parameter $f = (\nu, h)$ satisfying the conditions of Lemma~\ref{lem:existence} and Assumption~\ref{ass:identif_f}. If $N'$ is a Hawkes processes with the same link functions $(\phi_k)_k$ and parameter $f' = (\nu', h')$,  then
 \begin{align*}
 N \overset{\mathcal{L}}{=} N' \implies \nu = \nu' \quad \text{and} \quad h = h'.
\end{align*}
\end{proposition}

\begin{proof}
Let  $f' = (\nu', h')$ and $N' \sim \mathbb P_{f'}$. We recall that $N \sim \mathbb P_f$  and $N \overset{\mathcal{L}}{=} N'$ is equivalent to $ \lambda_t^l(f) = \lambda_t^l(f')$ for all $t>0$ and $l \in [K]$. Let $\tau_1$ be the first renewal time of the process $N$, as defined in Section \ref{sec:lem_excursions}. From the proof of Lemma \ref{lem:excursions}, with $U_1^{(1)}$ the time of the first event after $\tau_1$ and $V^{(1)} \in [K]$ the index of the component associated with this event, we have that $U_1^{(1)} \sim Exp(\norm{r_f}_1) \indep  V_1^{(1)}$ \ds{with $r_f = (r_1^f,\dots, r_K^f)$ and $r_k^f = \phi_k(\nu_k), \forall k$, } and
\begin{align*}
V_1^{(1)} \sim Multi\left(1; \frac{r_1^f}{\norm{r_f}_1}, \dots, \frac{r_K^f}{\norm{r_f}_1}\right).
\end{align*}
Therefore we can conclude that
\begin{align}\label{eq:identif_r}
    N \overset{\mathcal{L}}{=} N' \implies r_f = r_{f'} \iff  \phi_k(\nu_k) = \phi_k(\nu_k'), \: \forall k \in [K].
\end{align}
Since for all $k$, $\nu_k \in I_k$ defined by Assumption \ref{ass:identif_f} (ii), then $\nu_k'  =\phi_k^{-1}( \phi_k(\nu_k)) $ and since $\phi_k$ is monotone non-decreasing, we obtain $\nu_k = \nu_k', \forall k$.  

Moreover, for each $k \in [K]$, we define the event $\Omega_k$ as
\begin{align*}
     \Omega_k &= \left\{ \max_{k' \neq k} N^{k'}[\tau_1, \tau_2) = 0, N^k[\tau_1, \tau_1 + A] = 1,  N^k[\tau_1+A,\tau_2) = 0 \right\}.
\end{align*} 

On $\Omega_k$, for $t \in [\tau_1, \tau_2)\cap  [U_1^{(1)}, U_1^{(1)}+A]$ and $l \in [K],$
$\lambda_t^l(f) =  \phi_l(\nu_l + h_{kl}(t-U_1^{(1)}))$
and similarly for $\lambda_t^l(f')$. Then, for any $s = t-U_1^{(1)} \in [0,A]$, $\lambda_{U_1^{(1)}+s}^l(f) =   \phi_l(\nu_l + h_{kl}(s)) =  \phi_l(\nu_l + h_{kl}'(s))$. Consequently,  using that $\phi_l$ is injective on $I_l$,  $h_{kl} = h'_{kl}$ for all $1 \leq k,l \leq K$ which concludes the proof of this proposition. 

\end{proof}

\begin{proposition}{Proposition \ref{lem:identif_param2}}
Let $N$ be a Hawkes process with parameter $f = (\nu, h)$ and link function $\phi_k(x; \theta_k) = \theta_k + \psi_k(x)$ with $\theta_k\geq 0$ for any $k\in [K]$ satisfying the conditions of Lemma~\ref{lem:existence} and Assumption~\ref{ass:identif_f}. We also assume that for all $k\in [K]$, $\lim \limits_{x \to -\infty} \psi_k(x) = 0$ and
 \begin{equation}\label{ass:identif_theta}
  \exists l\in [K], \, x_1<x_2, \quad \text{such that } \, h_{lk}^{-}(x)>0, \quad \forall x \in [x_1, x_2].
  \end{equation} 
 Then if $N'$ is a Hawkes processes with link functions  $\phi_k(x; \theta_k') = \theta_k' + \psi_k(x)$, $\theta_k'\geq 0$ and parameter $f' = (\nu', h')$,
\begin{align*}
 N \overset{\mathcal{L}}{=} N' \implies \nu = \nu', \quad  h = h', \quad \text{and} \quad \theta = \theta', \quad \theta = (\theta_k)_{k=1}^K, \quad \theta'=(\theta_k')_{k=1}^K.
\end{align*}
Besides, in this case we have $\Probz{\inf \limits_{t \geq 0}  \lambda^k_t(f, \theta) = \theta_k} = 1.$
\end{proposition}

\begin{proof}
Using the proof of Proposition \ref{lem:identif_param},  we first obtain that $ \phi_k(\nu_k) = \phi_k(\nu_k')$, therefore
\begin{align*}
\theta_k + \psi_k(\nu_k)  = \theta_k' + \psi_k(\nu_k'), \: \forall k \in [K].
\end{align*}
Secondly, we also have that $\theta_l + \psi_k(\nu_l + h_{kl}(s)) =  \theta_l' + \psi_k(\nu_l' + h_{kl}'(s))$ for any $s \in [0,A]$ and all $1 \leq k,l \leq K$. 

We first prove that $\theta = \theta'$ and from the latter we can deduce that $\nu = \nu'$ and finally  that $h = h'$ by the injectivity of $\psi_k$ on $I_k$, for any $k$. The proof of the identification of $\theta$ relies on the construction of a specific excursion for each $k \in [K]$ in which there exists $t > 0$ such that  $\lambda^k_t(f) \in [\theta_k, \theta_k + \epsilon]$ for any $\epsilon > 0$. From that, we will deduce  that $N \overset{\mathcal{L}}{=} N' \implies \theta = \theta'$. 

Let $k \in [K]$ and consider $l \in [K]$ such that $h_{lk}$ satisfies  Assumption~\ref{ass:identif_theta}. We first note that
$$\lambda_t^k(f) = \theta_k + \psi_k(\Tilde \lambda_t^k(\nu, h)) \geq \theta_k.$$
Thus, we directly have that $\theta_k \leq \inf \limits_{t > 0} \lambda^k_t(f), $ a.s. Let $\epsilon > 0$. Using Assumption \ref{ass:identif_theta} (i), $\exists M > 0, \forall x \leq M, \: \psi_k(x) \leq \epsilon$. Using now Assumption~\ref{ass:identif_theta} (ii), let $l \in [K]$ and $x_1 < x_2$ such that $[x_1, x_2] \subset B_0 := \{x \in [0,A], h_{lk}(x) \leq - c_\ast \}$. Define $n_1 = \min \{n \in \N; \: nc_\ast > \nu_k^0 - M\}$, $\delta' = (x_2 - x_1) / 3$,
and we consider an excursion,  which we write $[0, \tau]$, and which satisfies
\begin{align*}
\mathcal{E} = \{N[0, \delta'] = N^l[0, \delta'] = n_1, \: N[\delta', \delta' +A] = 0\}.
\end{align*}
In other words the events only occur on the $l$-th component of the Hawkes process and only on $[0, \delta']$. 
Since $\psi_k$ is Lipschitz and injective on $I_k = (\nu_k - \max_l \norm{h_{lk}^-}_\infty - \varepsilon, \nu_k + \max_l \norm{h_{lk}^+}_\infty + \varepsilon)$, it holds that $\Probf{\mathcal{E}} > 0 $. For $t \in [x_1 + \delta', x_2]$, $\forall i \in [n_1]$, we have $x_1 \leq t - t_i \leq x_2$, and therefore, 
\begin{align*}
\Tilde \lambda_t^k(\nu, h) = \nu_k + \sum_{i \in [n_1]} h_{lk}(t - t_i) \leq \nu_k - n_1 c_\ast \leq M.
\end{align*}
Consequently, for $t \in [x_1 + \delta', x_2]$, $\lambda_t^k(f_0) = \theta_k + \psi_k(\Tilde \lambda_t^k(\nu, h)) \leq \theta_k + \epsilon$. We can then conclude that
\begin{align*}
\Probz{\exists t \geq 0, \: \lambda_{t}^k(f) \in [\theta_k, \theta_k + \epsilon] } >0,
\end{align*}
for any $\epsilon > 0$. This is equivalent to
\begin{align*}
 \theta_k =  \inf_{\omega \in \Omega} \inf_{t \in [0,\tau]} \lambda_t^k(f)(\omega),
\end{align*}
where $\lambda_t^k(f_0)(\omega)$ denotes the value of the random process $(\lambda_t(f_0))_t$ at time $t$.


Now, if $N'$ is a Hawkes process with parameter $f' \in \mathcal{F}$ and link functions $\phi_k = \theta_k' + \psi_k, \: k\in [K]$ such that $N \overset{\mathcal{L}}{=} N'$, then for any $t \geq 0$ and $k$ such $\lambda_t^k(f) \leq \theta_k + \epsilon$, we have $\theta_k' \leq \lambda_t^k(f') \leq \theta_k + \epsilon$ and thus, $\theta_k \geq \theta_k'$. Inversely, if   $\lambda_t^k(f') \leq \theta_k' + \epsilon$ then $\theta_k \leq \theta_k'$ and finally we can conclude that $\theta = \theta'$.

\end{proof}

\begin{lemma}[Lemma \ref{lem:hawkes_exciting}]
Let $N$ be a Hawkes process with parameter $f = (\nu, h)$ and link functions $\phi_k(x; \theta_k) = \theta_k + (x)_+, \: \theta_k \geq 0, \: k\in [K]$  satisfying Assumption \ref{ass:identif_f}, and let $k \in [K]$. If $\forall l \in [K], h_{l k} \geq 0$, then for any $\theta_k' \geq 0$ such that $\theta_k + \nu_k - \theta_k' > 0$, let $N'$ be the Hawkes process driven by the same underlying Poisson process $Q$ as $N$ (see Lemma \ref{lem:stoc_domination}) with parameter $f' = (\nu', h')$ and link functions $\phi_k(x; \theta_k') = \theta_k' + (x)_+, k\in [K]$ with $\nu' = (\nu_1, \dots, \nu_k+ \theta_k - \theta_k', \dots, \nu_K) \neq \nu, \: h' = h$, and $\theta' = (\theta_1, \dots, \theta_k', \dots, \theta_K) \neq \theta$. Then for any $t \geq 0$, $\lambda^k_t(f, \theta) = \lambda^k_t(f', \theta')$, and therefore $N \overset{\mathcal{L}}{=} N'$.
\end{lemma}


\begin{proof}
We consider $k \in [K]$ such that $\forall l \in [K], \: h_{lk} \geq 0 $. For any $t \geq 0$, we have
\begin{align*}
\Tilde{\lambda}^k_t(\nu, h) = \nu_k + \sum_l \int_{t-A}^{t^-} h_{lk}(t - s) dN_s^l \geq \nu_k> 0, 
\end{align*}
and thus
$
\lambda^k_t(f) =
\theta_k + (\Tilde{\lambda}^k_t(\nu, h))_+ = \theta_k + \Tilde{\lambda}^k_t(\nu, h).
$
Moreover, for any $t \geq 0$, we have
\begin{align*}
\Tilde{\lambda}^k_t(\nu', h') =  \nu_k + \theta_k - \theta_k' + \sum_l \int_{t-A}^{t^-} h_{lk}(t - s) dN_s^l \geq \nu_k + \theta_k - \theta_k' > 0,
\end{align*}
and
\begin{align*}
\lambda^k_t(f') &= \theta_k' + (\Tilde{\lambda}^k_t(\nu', h'))_+ =  \theta_k' + \Tilde{\lambda}^k_t(\nu', h') \\
&= \theta_k' + \nu_k + \theta_k - \theta_k' + \sum_l \int_{t-A}^{t^-} h_{lk}(t - s) dN_s^l = \theta_k + \Tilde{\lambda}^k_t(\nu, h) = \lambda^k_t(f).
\end{align*}
Therefore, we obtain that
$
    N =^{\mathcal{L}} N'.
$

\end{proof}

\subsection{Proofs of Lemmas \ref{lem:tau} and \ref{lem:conc_J}}\label{sec:proof_lem_tau}

In this section, we prove our lemmas related to the renewal properties of the nonlinear Hawles processes, in particular the existence of exponential moments for the generic renewal time $\Delta \tau_1$, and a concentration inequality on $J_T$. the nunber of excursions in the interval of observation $[0,T]$.

\begin{lemma}[Lemma \ref{lem:tau}]
\ds{Under the assumptions of Lemma \ref{lem:excursions}, the random variables $\Delta \tau_1$ and $N[\tau_1, \tau_2)$ admit exponential moments. More precisely, under condition \textbf{(C1bis)}, with  $m = \norm{S^+}<1$, we have 
$$
\forall s < \min (\norm{r_f}_1,\gamma/A), \quad \Exf{e^{s \Delta \tau_1}}  \leq \frac{1+m}{2m}, \quad \text{and}\quad \Exf{e^{sN[\tau_{1},\tau_2)}} < + \infty, \quad \gamma = \frac{1- m}{2\sqrt{K}} \log \left(\frac{1 + m}{2m}\right).
$$
Under condition \textbf{(C2)}, we have
$
\forall s < \min_k \Lambda_k), \: \Exf{e^{s \Delta \tau_1}}  \leq \frac{\norm{\Lambda}_1^2 }{ (\min_k \Lambda_k - s)^2}$ and $ \Exf{e^{sN[\tau_{1},\tau_2)}} < + \infty$.
In particular, this implies that $\Exf{N[\tau_{1},\tau_2) + N[\tau_1,\tau_2)^2} < + \infty.$}
\end{lemma}

\begin{proof}
\ds{Under condition \textbf{(C1bis)},} similarly to \cite{costa18_supplement}, we use the fact that the multivariate Hawkes model is stochastically dominated by a mutually-exciting process $N^+$ with parameter $f^+ = (\nu, (h^{+}_{lk})_{l,k})$, and driven by the same Poisson process as $N$ (see Lemma \ref{lem:stoc_domination}). For $N^+$, the stopping time $\Delta \tau_1^+$ corresponds to the length of the busy period of a $M^K/G^K/\infty$ queue (see Lemma \ref{lem:queue} which is a multi-type extension of existing results).

More precisely, since $N^+$ is mutually-exciting, the cluster representation is available \cite{reynaud06}, with the ancestor arrival process being a Poisson Point Process equal to the baseline rate $r_f$, defined in \eqref{eq:r_0}. For this process, the duration of the clusters then corresponds to the generic service time $H$ of a queue with an infinite number of servers. In the multidimensional case, this duration may depend on the type of the ancestor (or "customer" in the queuing framework) but the generic service time can be written in a compact form, and is independent of the arrival process
\begin{align*}
H = \sum_{k=1}^K \delta_k H^k,
\end{align*}
where $\delta_k = 1$ if and only if the ancestor is of type $k \in [K]$.  To apply Lemma \ref{lem:queue}, we only need to check that the cluster length $H^k, \: k \in [K]$ has exponential moments. This can be proved using results from \cite{donnet18:supplement}.

 For the process $N^+$, let $W^k$ be the number of events in a cluster with an ancestor of type $k$. By definition of a cluster of events, $H^k \leq A W^k$. Moreover, from Lemma 5 in the Supplementary Materials of \cite{donnet18:supplement}, for a mutually-exciting Hawkes process and for any $t \leq \frac{1 - \norm{S^+}_1}{2 \sqrt{K}} \log \left(\frac{1 + \norm{S^+}}{2 \norm{S^+}}\right)$ and $k \in [K]$,
\begin{align*}
\Exf{e^{tW^k}} \leq \frac{1 + \norm{S^+}}{2\norm{S^+}}.
\end{align*}
Therefore, we define
$\gamma = (1 - \norm{S^+}) \left[ \log \left(1 + \norm{S^+}\right) - \log (2 \norm{S^+}) \right]/(2 \sqrt{K})
$
and $s_0=  \frac{1 + \norm{S^+}}{2\norm{S^+}}$. For all $0 < t \leq \gamma$, we thus have
$
\Exf{e^{t H^k/A}} \leq s_0. 
$
Consequently, we deduce that the service time $H^k$ has exponential tails, i.e.,
$    \Probf{H^k \geq t} \leq s_0 e^{-t \gamma / A}. $
We can now use the fact that a.s. $\mathcal{T}_1 = \Delta \tau_1^+$ (cf Lemma \ref{lem:stoc_domination}), so that for any $s < \norm{r_f}_1 \wedge \gamma/A$, we have $\Exf{e^{s \Delta \tau_1^+}} < \infty$.
Finally using the second part of Lemma \ref{lem:stoc_domination}, we have that $\Probf{\Delta \tau_1 \leq \Delta \tau_1^+} = 1$ and, using Lemma~\ref{lem:queue}, we arrive at
$
 \forall s < \norm{r_f}_1 \wedge \gamma/A $, $ \Exf{e^{s \Delta \tau_1}} < \infty.$

\ds{Under condition \textbf{(C2)}, we use the fact that the process $N$ is dominated by a $K$-dimensional homogeneous Poisson point process $N_P = (N_P^1, \dots, N_P^K)$ with rate $\Lambda = (\Lambda_1, \dots, \Lambda_K)$.  For the latter process, the generic service time of an ancestor of type $k$, $H_k$, is exponentially distributed with mean $\Lambda_k$, i.e.,
\begin{align*}
\Probf{H_k > t} = e^{-\Lambda_k t}, \quad t \geq 0.
\end{align*}
Therefore, denoting $\Delta \tau_1^P$, the corresponding generic stopping time of $N^P$ (with the same definition as  in Lemma \ref{lem:excursions} for the Hawkes process (note that the Poisson point process is a renewal process), we have
\begin{align*}
\Probf{\Delta \tau_1^P > t} \leq \Exf{N^P[0,t]} e^{- \min_k \Lambda_k (t-A)} = \norm{\Lambda}_1 t e^{- \min_k \Lambda_k (t- A)}.
\end{align*}
Therefore, for any $s < \min_k \Lambda_k$, 
\begin{align*}
\Exf{e^{s \Delta \tau_1}} \leq \Exf{e^{s\Delta \tau_1^P}} &= \int_0^{+\infty} s e^{st} \Probf{\Delta \tau_1^P \geq t}dt \leq \norm{\Lambda}_1^2 e^{\min_k \Lambda_k A} \int_0^{+\infty} t e^{t(s - \min_k \Lambda_k)}dt \\
&\leq \norm{\Lambda}_1^2 \int_0^{+\infty} \frac{ e^{t(s - \min_k \Lambda_k)}}{\min_k \Lambda_k - s}dt =   \frac{\norm{\Lambda}_1^2 }{ (\min_k \Lambda_k - s)^2 }.
\end{align*}}

We now consider the number of events in a excursion  $N[\tau_1,\tau_2)$. \ds{Under condition \textbf{(C1bis)}}, From Lemma \ref{lem:stoc_domination}, we can also deduce that $\Exf{N[\tau_1,\tau_2)} \leq \Exf{N^+[\tau_1^+,\tau_2^+)}$. We once again use the cluster representation available for $N^+$. For the latter, let $n^\tau$ be the number of ancestors arriving in $[\tau_1^+, \tau_2^+)$ and $W_i$ be the number of points in the cluster with ancestor $i$ for $1 \leq i \leq n_\tau$. We denote $(NP_t)_t$ the homogeneous Poisson process of intensity $\norm{r_f}_1$ corresponding to the arrival times of the ancestors. By definition of $\tau_1^+, \tau_2^+$, we have
\begin{align}\label{eq:bound_N_tau}
N^+[\tau_1^+,\tau_2^+) &= \sum_{i=1}^{n_\tau} W_i.
\end{align}
Let $\gamma > s > 0$ and $u < \norm{r_f}_1 \wedge \gamma/A$. With $t = \Exf{e^{sW_1}} \leq s_0$, since the $W_i$'s are independent conditionally on $n_\tau$,
\begin{align*}
    \Exf{e^{sN[\tau_1,\tau_2)}} &\leq \Exf{e^{s \sum_{i=1}^{n_\tau} W_i}} =  \Exf{ \Exf{e^{s \sum_{i=1}^{n_\tau} W_i}|n_\tau}}
    =  \Exf{ \Exz{e^{s W_1}}^{n_\tau}} = \Exf{\sum_{l = A}^{+\infty} e^{s n_\tau}\mathds{1}_{\Delta \tau_1 \in [l,l+1)}}  \\
&    \leq \sum_{l = A}^{+\infty} \Exf{ e^{s NP[\tau_1, \tau_1 + l + 1)}\mathds{1}_{\Delta \tau_1 \geq l}} 
\leq \sum_{l = A}^{+\infty} \sqrt{\Exf{ e^{2 sNP[\tau_1,\tau_1 + l + 1)}}} \sqrt{\Probf{\Delta \tau_1 > l}} \\
 & \leq \sqrt{\Exf{e^{u \Delta \tau_1}}} \sum_{l = A}^{+\infty} \sqrt{\Exf{ e^{2sNP[\tau_1,\tau_1 + l + 1)}}}e^{-ul/2} =  \sqrt{\Exf{e^{u \Delta \tau_1}}} \sum_{l = A}^{+\infty}    e^{\norm{r_f}_1 (l+1)(e^{2s} - 1)/2} e^{-ul/2},
\end{align*}
since $NP$ is a homogoneous Poisson process with rate $\norm{r_f}_1$. Moreover, since for any $\alpha \in (0,1)$, $\Exf{e^{\alpha sW_1}} = (\Exf{e^{\alpha sW_1}}^{1/\alpha})^{\alpha} \leq \Exf{e^{s W_1}}^{\alpha} \leq s_0^\alpha$, with $t' = \Exf{e^{\alpha sW_1}}$,  we have that $\norm{r_f}_1 (l+1)(e^{2t'} - 1) < u/2$ for $\alpha$ small enough. Consequently,
\begin{align*}
    \Exf{e^{sN[\tau_1,\tau_2)}} &\leq  \sqrt{\Exf{e^{u \Delta \tau_1}}} \sum_{l = A}^{+\infty} e^{-ul/4} = \frac{ \sqrt{\Exz{e^{u \Delta \tau_1}}} }{1- e^{-u/4}} < \infty.
\end{align*}
In particular, this implies that $\Exf{N[\tau_1,\tau_2)} + \Exf{N[\tau_1,\tau_2)^2} < \infty$. \ds{Under condition \textbf{(C2)}, the dominating process $N^+$ is a homogeneous Poisson process with intensity $\Lambda = (\Lambda_1, \dots, \Lambda_K)$ and the previous computations remain valid by replacing $r_f$ by $\Lambda$ and with $W_i = 1$ for any $i \in [n_\tau]$ (since in this case each cluster only contains the ``ancestor" event).}

\end{proof}

\begin{lemma}[Lemma \ref{lem:conc_J}]
Under the assumptions of Lemma \ref{lem:excursions}, for any $\beta > 0$, there exists a constant $c_\beta > 0$ such that
$
\Probf{J_T \notin [J_{T,\beta, 1}, J_{T,\beta, 2}]} \leq  T^{-\beta},
$
with $J_T$ defined in \eqref{def:J_T} and
\begin{align*}
&J_{T,\beta, 1} = \left \lfloor \frac{T}{\Exf{\Delta \tau_1}} \left(1 - c_\beta \sqrt{\frac{\log T}{T}}\right)\right \rfloor, \quad J_{T,\beta, 2} = \left \lfloor \frac{T}{\Exf{\Delta \tau_1}} \left(1 + c_\beta \sqrt{\frac{\log T}{T}}\right)\right \rfloor.
\end{align*}
\end{lemma}

\begin{proof}
Let $c_\beta > 0$ and for $2 \leq j \leq J_T$, $B_j = \tau_{j} - \tau_{j-1} - \Exf{\Delta \tau_1}$. Using Lemma \ref{lem:excursions}, the random variables $\{B_j\}_{2 \leq j\leq J_T}$ are i.i.d.. By definition of $J_{T,\beta,2}$, we have
\begin{align*}
\frac{T}{\Exf{\Delta \tau_1}} \left(1 + c_\beta \sqrt{\frac{\log T}{T}}\right) - 1 < J_{T,\beta,2} \leq \frac{T}{\Exf{\Delta \tau_1}} \left(1 + c_\beta \sqrt{\frac{\log T}{T}}\right).
\end{align*}
Therefore,
\begin{align*}
\Probf{J_T\geq J_{T,\beta,2}} &= \Probz{\tau_{J_{T,\beta,2}} \leq T} = \Probf{\tau_0 + \sum_{j=1}^{J_{T,\beta,2}} B_{j} \leq T - J_{T,\beta,2}\Exf{\Delta \tau_1}} \\
&= \Probf{\sum_{j=1}^{J_{T,\beta,2}} B_{j} \leq T - J_{T,\beta,2} \Exf{\Delta \tau_1}} \leq \Probf{\sum_{j=1}^{J_{T,\beta,2}} B_{j} \leq T - T \left(1 + c_\beta \sqrt{\frac{\log T}{T}}\right) + \Exf{\Delta \tau_1} } \\
&= \Probf{\sum_{j=0}^{J_{T,\beta,2}} B_{j} \leq - c_\beta \sqrt{T \log T} + \Exf{\Delta \tau_1} } \leq \Probf{\sum_{j=1}^{J_{T,\beta,2}} B_{j} \leq - \frac{c_\beta \sqrt{T \log T}}{2}}.
\end{align*}
We can now apply the Bernstein's inequality. Using Lemma~\ref{lem:tau},  there exists $\alpha > 0$, such that $ \Exf{e^{\alpha \Delta \tau_1}} < + \infty$. Since
\begin{align*}
\Exf{e^{\alpha \Delta \tau_1}} = \sum_{k = 1}^{+\infty} \frac{\alpha^k \Exf{(\Delta \tau_1)^k}}{k!},
\end{align*}
we therefore have that
\begin{align*}
\Exf{(\Delta \tau_1)^k} \leq \frac{k!}{\alpha^k} \Exf{e^{\alpha \Delta \tau_1}} = \frac{1}{2} k! \alpha^{-k+2} \times 2 \frac{\Exf{e^{\alpha \Delta \tau_1}}}{\alpha^2}.
\end{align*}
In particular, $\Exf{(\Delta \tau_1)^2} \leq 2 \frac{\Exz{e^{\alpha \Delta \tau_1}}}{\alpha^2} =: v$. Consequently, with $b := 1/\alpha$, we obtain
$
\Exf{(\Delta \tau_1)^k} \leq \frac{1}{2} k! b^{k-2} v, 
$
and therefore,
\begin{align*}
\Probf{J_T\geq J_{T,\beta,2}} &\leq \exp \left \{ \frac{- c_\beta^2  T \log T }{8 (\sigma^2 + \frac{c_\beta}{2} \sqrt{T \log T} b)} \right \},
\end{align*}
with 
\begin{align*}
\sigma^2 &= \sum_{j=1}^{J_{T,\beta, 2}} \mathbb{V}_{f}(B_j) = J_{T,\beta, 2} \mathbb{V}_{f}(\Delta \tau_1) 
\leq  T \left(1 + c_\beta \sqrt{\frac{\log T}{T}}\right) \frac{\Exf{\Delta \tau_1^2} }{\Exf{\Delta \tau_1} } \leq 2 T \frac{\Exf{\Delta \tau_1^2} }{\Exf{\Delta \tau_1} }, 
\end{align*}
for $T$ large enough. Therefore, $\sigma^2 + \frac{c_\beta}{2} \sqrt{T \log T} b \leq 4 T\frac{\Exf{\Delta \tau_1^2} }{\Exf{\Delta \tau_1} }$ and
\begin{align*}
\Probf{J_T\geq J_{T,\beta,2}} &\leq \exp \left \{ \frac{- c_\beta^2 \log T \Exf{\Delta \tau_1}}{32\Exf{\Delta \tau_1^2}} \right \} = o(T^{-\beta}),
\end{align*}
for any $\beta > 0$, if $c_\beta > 0$ is chosen large enough. Consequently, with probability greater than $1 - \frac{1}{2}T^{-\beta}$, we have that $J_T \leq  \frac{T}{\Exf{\Delta \tau_1}}  \left(1 + c_\beta \sqrt{\frac{\log T}{T}}\right) $. Similarly, we obtain that
\begin{align*}
\Probf{J_T\leq J_{T,\beta, 1}} &\leq \Probf{\sum_{j=1}^{J_{T,\beta,1}} B_{j} \geq  c_\beta \sqrt{T \log T}}  \leq \exp \left \{ \frac{-c_\beta^2 T \log T }{2 (\sigma^2 + c_\beta  \sqrt{T \log T} b)} \right \} \\
&\leq \exp \left \{ \frac{- c_\beta^2 \log T\Exf{\Delta \tau_1}}{4\Exf{\Delta \tau_1^2}} \right \} = o(T^{-\beta}). 
\end{align*}
Finally, we conclude that with probability greater than $1 - T^{-\beta}$, $J_{T,\beta,1} \leq J_T \leq J_{T,\beta,2}$.

\end{proof}

\section{Proof of lemmas \ref{lem:main_event} and \ref{lem:ef}}\label{app:proof_bay_lemmas}

\subsection{Proof of Lemma \ref{lem:main_event}}\label{app:proof_lem_event}

\begin{lemma}[Lemma \ref{lem:main_event}]
Let $Q > 0$. We consider $\Tilde{\Omega}_T$ defined in \eqref{def:Omega} in Section \ref{sec:proof_conc_g}. For any $\beta > 0$, we can choose $C_\beta$ and $c_\beta$ in the definition of $\Tilde{\Omega}_T$ such that
\begin{align*}
	\probz{\Tilde{\Omega}_T^c} \leq T^{-\beta}.
\end{align*}
Moreover, for any $1 \leq q \leq Q$,
$
\Exz{\mathds{1}_{\eve^c} \max_l \sup \limits_{t \in [0,T]} \left(N^l[t-A,t)\right)^q} \leq 2 T^{-\beta/2}. 
$
\ds{Finally, the previous results hold when replacing $\eve$ by $\eve' = \eve \cap \Omega_A$ with $\Omega_A$ defined in Section \ref{sec:proof_conc_f} for the model with shifted ReLU link and unknown shift. }
\end{lemma}

\begin{proof}
Let $\beta > 0$. From the definition of $\eve$, we have that
\begin{align}\label{eq:decomp_event}
\probz{\Tilde{\Omega}_T^c} \leq \probz{\Omega_N^c} + 3 \probz{\Omega_J^c} + \probz{\Omega_J \cap \Omega_U^c}.   
\end{align}
For the second term on the RHS of \eqref{eq:decomp_event}, we can directly use Lemma \ref{lem:conc_J}, and we obtain $\probz{\Omega_J^c} \leq \frac{1}{12} T^{-\beta}$ for $c_\beta$ large enough. For the first term on the RHS of \eqref{eq:decomp_event}, we use the same strategy as in \cite{donnet18:supplement}. Firstly we have 
\begin{align}\label{eq:decomp_ev_N}
\probz{\Omega_N^c} \leq \Probz{\max \limits_{k \in [K]} \sup \limits_{t \in [0,T]} N^k[t-A,t) > C_\beta \log T} + \sum_{k = 1}^K \Probz{\left| \frac{N^k[0,T]}{T} - \mu_k^0 \right| \geq \delta_T}.
\end{align}
For the first term on the RHS of \eqref{eq:decomp_ev_N}, we use the coupling with the process $N^{+}$, i.e., the Hawkes process with parameter $f_0^+ = (\nu_0, h_0^+)$ driven by the same Poisson process. Then for any $l \in [K]$, $\sup \limits_{t \in [0,T]} N^l[t-A,t) \leq \sup \limits_{t \in [0,T]} (N^+)^l[t-A,t)$ and consequently,
\begin{align*}
\Probz{\max \limits_{k \in [K]} \sup \limits_{t \in [0,T]} N^k[t-A,t) > C_\beta \log T} \leq \Probz{\max \limits_{k \in [K]} \sup \limits_{t \in [0,T]} (N^+)^k[t-A,t) > C_\beta \log T}.
\end{align*}
Using Lemma 2 from \cite{donnet18:supplement}, we obtain that for any $\beta > 0$, there exists $C_{\beta}> 0$ such that
\begin{align*}
\Probz{\max \limits_{k \in [K]} \sup \limits_{t \in [0,T]} (N^+)^k[t-A,t) > C_{\beta} \log T} \leq \frac 1 4 T^{-\beta}. 
\end{align*}
For the second term on the RHS of \eqref{eq:decomp_ev_N}, we use the same arguments as in the proof of Lemma~3 in \cite{donnet18:supplement}. For $k \in [K]$, we have
\small
\begin{align}\label{eq:decomp_ev_N2}
 \Probz{\left| \frac{N^k[0,T]}{T} - \mu_k^0 \right| \geq \delta_T} \leq  \Probz{\left| N^k[0,T] - \int_0^T \lambda_t^k(f_0) \right| \geq T\delta_T / 2} + \Probz{\left| \int_0^T \lambda_t^k(f_0) - \mu_k^0 T \right| \geq T\delta_T / 2}.
\end{align}
\normalsize
For the second term on the RHS of \eqref{eq:decomp_ev_N2}, we can use Corollary 1.1 from \cite{costa18_supplement}. We have that
$
 \lambda_t^k(f_0) = Z(S_tN),
$
with 
\begin{align*}
  Z(N) =  \lambda_0^k(f_0) =  \phi_k\left(\nu_k^0 + \sum_l \int_{-A}^{0^-} h_{lk}(t - s)dN_s^l\right) \leq  L b (1 + N[-A,0)),  
\end{align*}
with $b = \max (\nu_k^0, \max_l \norm{h_{lk}^{0+}}_\infty)$ and for $t \in \R$, $S_t: \mathcal{N}(\R) \to S_tN = N(.+t)$ the shift operator by $t$ units of time. Applying Corollary 1.1 of \cite{costa18_supplement} with $f = Z$, $\pi_Af = \Exz{\lambda_0^k(f_0)} = \mu_k^0, \: \varepsilon = \delta_T/2$ and $\eta = \frac{1}{4} T^{-\beta}$, we obtain that for $\delta_0$ large enough,

\begin{align*}
\Probz{\left| \int_0^T \lambda_t^k(f_0) - \mu_k^0 T \right| \geq T\delta_T / 2} \leq \frac{1}{4} T^{-\beta}.
\end{align*}
For the first term on the RHS of \eqref{eq:decomp_ev_N2}, we use the computations of the proof Lemma 3 in the Supplementary Materials of \cite{donnet18:supplement} and obtain
\begin{align*}
\Probz{\left| N^k[0,T] - \int_0^T \lambda_t^k(f_0) \right| \geq T\delta_T / 2} \leq \frac{1}{4} T^{-\beta},
\end{align*}
for $\delta_0$ large enough.

For the third term on the RHS of \eqref{eq:decomp_event}, we denote $X_j = U_j^{(1)} - \tau_j$ for $1 \leq j \geq J_T-1$. We recall that the $X_j$'s are i.i.d. and follow an exponential law with rate $ \norm{r_0}_1$ under $\mathbb{P}_0$ and $\Exz{X_j} = \frac{1}{\norm{r_0}_1}$. We thus have
\begin{align*}
&\probz{\Omega_J \cap \Omega_U^c} \leq \Probz{\Omega_J \cap \left\{\sum_{j=1}^{J_T-1} X_j \leq 
     \frac{T}{\mathbb{E}_0[\Delta \tau_1] \|r_0\|_1} \left(1 - 2c_\beta \sqrt{\frac{\log T}{T}}\right) \right \}}\\
     &\quad \leq \Probz{\Omega_J \cap \left\{\sum_{j=1}^{J_T-1} X_j - \frac{J_T - 1}{\norm{r_0}_1}  \leq \frac{T}{\mathbb{E}_0[\Delta \tau_1] \|r_0\|_1} \left(1 - 2c_\beta \sqrt{\frac{\log T}{T}} - 1 + c_\beta \sqrt{\frac{\log T}{T}}\right) \right \}}\\
     &\quad = \Probz{\Omega_J \cap \left\{\sum_{j=1}^{J_T-1} X_j - \frac{J_T - 1}{\norm{r_0}_1}  \leq - \frac{c_\beta\sqrt{T \log T}}{\mathbb{E}_0[\Delta \tau_1] \|r_0\|_1} \right  \}}\leq \sum_{J \in \mathcal{J_T}} \Probz{\sum_{j=1}^{J-1} X_j - \frac{J - 1}{\norm{r_0}_1}  \leq - \frac{c_\beta\sqrt{T \log T}}{\mathbb{E}_0[\Delta \tau_1] \|r_0\|_1} },
\end{align*}
where in the first inequality we have used the fact that on $\Omega_J$,
\begin{align*}
J_T - 1 \geq \frac{T}{\mathbb{E}_0[\Delta \tau_1] }\left(1 - c_\beta \sqrt{\frac{\log T}{T}}\right).
\end{align*}
We apply the Bernstein's inequality using that for any $k \geq 1$,
$\Exz{X_1^k} \leq  k! (\norm{r_0}_1)^{-k+2} \Exz{X_1^2}/2$.
Therefore, since $\Exz{X_1^2}=\norm{r_0}_1^{-2}$, we obtain
\begin{align*}
 \Probz{\sum_{j=1}^{J-1} X_j - \frac{J - 1}{\norm{r_0}_1}  \leq -\frac{c_\beta\sqrt{T \log T}}{\mathbb{E}_0[\Delta \tau_1] \|r_0\|_1} } &\leq
\exp -\left \{ \frac{c_\beta^2 \log T}{\Exz{\Delta \tau_1}^2(1 + \frac{c_\beta \sqrt{\log T}}{\mathbb{E}_0[\Delta \tau_1]\sqrt{T}})} \right \} \\
&\leq \exp -\left \{\frac{c_\beta^2 \log T}{2\Exz{\Delta \tau_1}} \right \} \leq \frac{1}{4} T^{-\beta},
\end{align*}
for $c_\beta > 0$ large enough. Finally, reporting into \eqref{eq:decomp_event} we can conclude that for $C_\beta, c_\beta, \delta_0$ large enough,
\begin{align*}
\Probz{\eve^c} \leq T^{-\beta}.
\end{align*}
For the second part of the lemma, we can use the exact same arguments as in the proof of Lemma 2 in  \cite{donnet18:supplement} to obtain the result.

For the case of shifted ReLU link function with unknown shift, we similarly have that
\begin{align}
\probz{\Tilde{\Omega}_T'^c} \leq \probz{\Omega_N^c} + 3 \probz{\Omega_J^c} + \probz{\Omega_J \cap \Omega_U^c} + \Probz{\Omega_J \cap \Omega_A^c},
\end{align}
and therefore it only remains to bound the last term on the RHS of the previous inequality. Using Assumption~\ref{ass:identif_theta} (ii), let $0<x_1 < x_2$ and $c_\star$ such that $[x_1, x_2] \subset B_0 = \{x \in [0,A], h_{lk}^{0}(x) \leq - c_\ast \}$, $n_1 = \min \{n \in \N; nc_\ast > \nu_k^0\}$, $\delta' = (x_2 - x_1) / 3$. We denote $\mathcal{E}_0$ the set of indices satisfying
\begin{align*}
\mathcal{E}_0 = \{j \in [J_T]; \: N[\tau_j, \tau_j + \delta'] = N^l[\tau_j, \tau_j + \delta'] = n_1, \: N[\tau_j + \delta', \tau_{j+1}) = 0\}.
\end{align*}
Since $\forall t \in [\tau_j + x_1 + \delta', \tau_j + x_2]$, $\Tilde \lambda_t^k(f) < 0$, then $|A^k(f_0)| \geq \frac{2(x_2 - x_1)}{3} |\mathcal{E}_0|$ and, with $p_0 = \Probz{j \in \mathcal{E}_0}$,
\begin{align*}
    \Probz{|A^k(f_0)| < z_0 T} \leq \Probz{|\mathcal{E}_0| < \frac{3z_0}{2(x_2 - x_1)} T} \leq \Probz{|\mathcal{E}_0| < p_0 T /2},
\end{align*}
if $z_0 < 2 p_0(x_2 - x_1)/3$. Consequently, applying Hoeffding's inequality with $Y_j = \mathds{1}_{j \in \mathcal{E}_0} \overset{i.i.d.}{\sim} \mathcal{B}(p_0)$ for $j \in [J_T]$ with $J_T \geq 2T /3\Exz{\Delta \tau_1}$, we obtain
\begin{align*}
\Probz{ |\mathcal{E}_0| < \frac{p_0 T}{2}} &\leq \Probz{\sum_{j=1}^{2T/3\Exz{\Delta \tau_1}} Y_j < \frac{p_0 T}{2}} \lesssim e^{-\frac{ T p_0^2}{6\Exz{\Delta \tau_1}} } \leq \frac{1}{4} T^{-\beta}.
\end{align*}
Consequently, $\Probz{\Omega_J \cap \Omega_A^c} =  o(T^{-\beta})$, which terminates the proof of this lemma.

\end{proof}

\subsection{Proof of Lemma \ref{lem:ef}}

\begin{lemma}[Lemma \ref{lem:ef}]
For  $f \in \mathcal{F}_T$ and $l \in [K]$, let 
\begin{equation*}
    Z_{1l} = \int_{\tau_1}^{\xi_1} |\lambda^l_t(f) - \lambda^l_t(f_0)|dt, 
\end{equation*}
where $\xi_1$ is defined in \eqref{def:A2} in Section \ref{sec:proof_conc_g}. 
Under the assumptions of Theorem \ref{thm:conc_g} and Case 1 of Proposition \ref{prop:relu}, for $M_T \to \infty$ such that $M_T > M \sqrt{\kappa_T}$ with $M>0$ and for any $f \in \mathcal{F}_T$ such that $\norm{\nu-\nu_0}_1 \leq \max(\norm{\nu_0}_1, \Tilde{C})$ with $\Tilde{C}>0$,
there exists $l \in [K]$ such that on $\Tilde{\Omega}_{T}$,
    \begin{equation*}
        \Exf{Z_{1l}} \geq C(f_0)  \norm{f - f_0}_1, 
    \end{equation*}
with $C(f_0) > 0$ a constant that depends only on $f_0$ and $\phi=(\phi_k)_k$.

Similarly, under the assumptions of  Case 2 of Proposition \ref{prop:relu}, for $f \in \mathcal{F}_T$ and $\theta \in \Theta$, let $r_0 = (r_k^0)_k, \: r_f = (r_k^f)_k$ with $r_k^0 = \phi_k(\nu_k^0) = \theta_k^0 + \nu_k^0, \: r_k^f = \phi_k(\nu_k) = \theta_k + \nu_k, \:  \forall k$. If $\norm{r_f - r_0}_1 \leq \max(\norm{r_0}, \Tilde{C}')$ with $\Tilde{C}'>0$, then there exists $l \in [K]$ such that on $\Tilde{\Omega}_{T}$,
    \begin{equation}\label{eq:ef}
        \Exf{Z_{1l}} \geq C'(f_0) (\|r_f - r_0\|_1 + \norm{h - h_0}_1), \quad C'(f_0) > 0.
    \end{equation}
\end{lemma}

\begin{proof}
\ds{In this proof, we will show that \eqref{eq:ef} holds for all the models satisfying the assumptions of Theorem \ref{thm:conc_g} and  Proposition \ref{prop:relu}, with $r_k^0 = \phi_k(\nu_k^0)$ and $r_k^f = \phi_k(\nu_k)$ for all $k$. Then, excluding Case 2, we use the fact that for any $k$, $\phi_k^{-1}$ is fully known and $L'$-Lipshitz on $J_k = \phi_k(I_k)$ with $I_k$ defined in Assumption \ref{ass-psi} (which also holds for the ReLU link function by Assumption \ref{ass:identif_f}), to show that 
\begin{align*}
	\|r_f - r_0\|_1 + \norm{h - h_0}_1 &\geq 1/L'\|\nu - \nu_0\|_1 + \norm{h - h_0}_1 \\
	&\geq \min(1, 1/L')(\|\nu - \nu_0\|_1 + \norm{h - h_0}_1) =  \min(1, 1/L') \norm{f-f_0}_1.
\end{align*}}

\ds{The proof of \eqref{eq:ef}  is inspired by the proof of Lemma 4 in the supplementary material of \cite{donnet18:supplement}. The following computations are valid in all our estimation scenarios. We recall that for any $k$, $r_k^f = \nu_k$ for the ReLU link (Case 1 of Proposition \ref{prop:relu}) and  $r_k^f = \theta_k + \nu_k$ for the shifted ReLU link (Case 2 of Proposition \ref{prop:relu}).}

Let $A > x > 0$ and $\eta>0$ such that
\begin{equation}\label{eq:parameta}
    0 < \frac{(A + x)^2 \eta K^2}{1 - \eta K} < \frac 1 2 \qquad \text{ and } \qquad \eta \leq \frac{\min_{l} r_l^0}{2 C_0'},
\end{equation}
with $C_0'$ such that $ \|r_f - r_0\|_1 + \norm{h - h_0}_1 \leq C_0'$. Assume that for any $1 \leq l' \leq K$, $|r^f_{l'} - r_{l'}^0| \leq \eta (\|r_f - r_0\|_1 + \norm{h - h_0}_1) $ and let $l \in [K]$ such that
$
    \sum_k \|h_{kl} - h_{kl}^0\|_1 = \max_{l'} \sum_k \|h_{kl'} - h_{kl'}^0\|_1.
$

Then we have
\begin{equation}\label{eq:norm1}
  \|r_f - r_0\|_1 + \norm{h - h_0}_1\leq \left(\frac{\eta K^2}{1 - \eta K} + K \right)  \sum_k \|h_{kl} - h_{kl}^0\|_1.
\end{equation}
For each $k \in [K]$, we define the event $\Omega_k$ as
\begin{align*}
     \Omega_k &= \left\{ \max_{k' \neq k} N^{k'}[\tau_1, \tau_2) = 0, \: N^k[\tau_1, \tau_1 + x] = 0, \: N^k[\tau_1 + x, \tau_1 + x+A] = 1,  \: N^k[\tau_1 + x+A, \tau_2) = 0 \right\}.
\end{align*} 
On $\Omega_k$, we have $\xi_1 = U_1^{(1)}+ A$ and thus,
\begin{align*}
     \Exf{Z_{1l}} &\geq \sum_k\Exf{\mathds{1}_{\Omega_k} \int_{\tau_1}^{A+U_1^{(1)}} |\lambda_t^l(f)-\lambda_t^l(f_0)| dt}.
\end{align*}
Let $\mathbb{Q}$ be the point process measure of a homogeneous Poisson process with unit intensity on $\R^+$ and equal to the null measure on $[-A,0)$. Then
\begin{align*}
     \Exf{Z_{1l}} &\geq \sum_k \mathbb{E}_{\mathbb{Q}}\left[\int_{\tau_1}^{U_1^{(1)}+ A} \mathcal{L}_t(f) \mathds{1}_{\Omega_k}  |\lambda_t^l(f)-\lambda_t^l(f_0)|  \right]dt,
\end{align*}
with $\mathcal{L}_t(f)$ the likelihood process given by
\begin{align*}
    \mathcal{L}_t(f) = \exp \left(Kt - \sum_k \int_{\tau_1}^t \lambda_u^k(f) du + \sum_k \int_{\tau_1}^t \log (\lambda_u^k(f)) dN^k_u  \right).
\end{align*}
For $t \in [\tau_1, U_1^{(1)}+ A)$, since on $\Omega_k$, $\tau_1 + x \leq U_1^{(1)}\leq \tau_1 + A+x$, we have 
\begin{align*}
    \mathcal{L}_t(f)
     &\geq e^{Kt} \lambda^k_{U_1^{(1)}}(f) \exp \left \{ - \sum_{k'} \int_{\tau_1}^t \phi_{k'}(\Tilde \lambda_u^{k'}(f)) du \right \}.
\end{align*}
\ds{Under condition \textbf{(C2)}, since $\phi_{k'} \leq \Lambda_{k'}, \forall k'$, with $\Lambda = (\Lambda_1, \dots, \Lambda_K)$, we directly have that
\begin{align*}
 \mathcal{L}_t(f)
     &\geq e^{Kt} \lambda^k_{U_1^{(1)}}(f) e^{-\norm{\Lambda}_1} \geq r_k^f e^{-\norm{\Lambda}_1},
\end{align*}
since at $\lambda_{U_1^{(1)}}^k = r_k^f = \phi_k(\nu_k)$.}

\ds{Under condition \textbf{(C1bis)}, using that $\phi_k$ is $L$-Lipschitz, we have } 
\begin{align*}
\mathcal{L}_t(f)
      &\geq e^{- \sum_{k'}  \phi_{k'}(0)(A + U_1^{(1)} - \tau_1)}\lambda^k_{U_1^{(1)}}(f) \exp \left \{ -  \sum_{k'} \int_{\tau_1}^{A + U_1^{(1)}}( \phi_{k'}(\Tilde \lambda_u^{k'}(f)) - \phi_{k'}(0)) du \right \} \\
     &\geq e^{- \sum_{k'}  \phi_{k'}(0)(A + U_1^{(1)} - \tau_1)} \lambda^k_{U_1^{(1)}}(f) \exp \left \{ - L \sum_{k'} \left( (A + U_1^{(1)} - \tau_1) \nu_{k'} + \int_{U_1^{(1)}}^{A+U_1^{(1)}} h_{kk'}(u- U_1^{(1)}) du \right) \right \} \\
        &\geq e^{- \sum_{k'}  \phi_{k'}(0)(2A + x)} \lambda^k_{U_1^{(1)}}(f)\exp \left \{ - L \sum_{k'} \left( (2A + x) \nu_{k'}  + \int_{U_1^{(1)}}^{A+U_1^{(1)}} h^+_{kk'}(u- U_1^{(1)}) du \right) \right \} \\
    &\geq  e^{-  \sum_{k'}  \phi_{k'}(0)(2A + x)} r_k^f\exp \left \{- L \sum_{k'}\left((2A + x)\nu_{k'} + \|h^+_{kk'}\|_1 \right) \right \}.
\end{align*}
\normalsize
Moreover, since $\|S^+\|_1 < 1$, then $\forall (k,k') \in [K]^2, \: \|h^+_{kk'}\|_1 < 1$. Thus, we obtain 
\begin{align*}
    \mathcal{L}_t(f) &\geq  e^{-  \sum_{k'}  \phi_{k'}(0)(2A + x)}  r_k^fe^{- L K - L(2A + x) \sum_{k'}\nu_{k'} } \\
    &\geq \frac{  e^{-  \sum_{k'}  \phi_{k'}(0)(2A + x)} r_k^0}{2} e^{- L K - 6AL\max(\Tilde C,\norm{\nu_0}_1)} =: C.
\end{align*}
In the last inequality, we have used our assumption $\norm{\nu-\nu_0}_1 \leq \max(\norm{\nu_0}_1, \Tilde C)$ which implies that
\begin{align*}
\sum_{k'} \nu_{k'} \leq 2\max(\norm{\nu_0}_1, \Tilde{C}). 
\end{align*}


Moreover, we have that
\begin{align*}
     \Exf{Z_{1l}}  &\geq C \sum_k  \mathbb{E}_{\mathbb{Q}}\left[ \mathds{1}_{\Omega_k} \int_{U_1^{(1)}}^{U_1^{(1)} + A}\left| \phi_l(\Tilde \lambda_t^l(f)) - \phi_l( \Tilde \lambda_t^l(f_0))|    \right|dt\right] \\
     &\geq \frac{C}{L'}\sum_k  \mathbb{E}_{\mathbb{Q}}\left[ \mathds{1}_{\Omega_k} \int_{U_1^{(1)}}^{U_1^{(1)} + A}\left|(\nu_l - \nu_l^0) + (h_{kl}-h_{kl}^0)(t-U_1^{(1)})  \right|dt\right],
\end{align*}
\ds{in all models except Case 2. In fact, in the latter case,  we obtain}
\begin{align*}
     \Exf{Z_{1l}} &\geq C \sum_k  \mathbb{E}_{\mathbb{Q}}\left[ \mathds{1}_{\Omega_k} \int_{U_1^{(1)}}^{U_1^{(1)} + A}\left|(\theta_l + \nu_l - \theta_l^0 - \nu_l^0) + (h_{kl}-h_{kl}^0)(t-U_1^{(1)})  \right|dt\right]  \\
     &= C \sum_k  \mathbb{E}_{\mathbb{Q}}\left[ \mathds{1}_{\Omega_k} \int_{U_1^{(1)}}^{U_1^{(1)} + A}\left|(r_l^f - r_l^0) + (h_{kl}-h_{kl}^0)(t-U_1^{(1)})  \right|dt\right].
\end{align*}
On the one hand,
\begin{align*}
    \mathbb{E}_{\mathbb{Q}}\left[  \mathds{1}_{\Omega_k} \int_{U_1^{(1)}}^{U_1^{(1)} + A} |\nu_l - \nu_l^0| dt \right] &=  A |\nu_l - \nu_l^0|\mathbb{Q}(\Omega_k) \leq A L' |\phi_l(\nu_l) - \phi_l(\nu_l^0)|\mathbb{Q}(\Omega_k) = A L' |r_l^f - r_l^0| \mathbb{Q}(\Omega_k) \\
    &\leq A L' \frac{\eta K^2}{1 - \eta K} \sum_{k'} \|h_{k'l}-h_{k'l}^0\|_1,
\end{align*}
and in Case 2 we have
\begin{align*}
    \mathbb{E}_{\mathbb{Q}}\left[  \mathds{1}_{\Omega_k} \int_{U_1^{(1)}}^{U_1^{(1)} + A} |r_l^f - r_l^0| dt \right] &= A  |r_l - r_l^0| \mathbb{Q}(\Omega_k) \leq A \frac{\eta K^2}{1 - \eta K} \sum_{k'} \|h_{k'l}-h_{k'l}^0\|_1.
\end{align*}
On the other hand, by definition of $\mathbb{Q}$, $N^k[\tau_1, \tau_1 + x+A] \sim \text{Poisson}(x+A)$. Consequently, with $U$ a random variable with uniform distribution on $[\tau_1 + x, \tau_1 + x+A]$, we obtain
\begin{align*}
    \mathbb{E}_{\mathbb{Q}} &\left[ \mathds{1}_{\Omega_k} \int_{U_1^{(1)}}^{U_1^{(1)} + A} \left|(h_{kl}-h_{kl}^0)(t-U_1^{(1)}) \right| dt \right] = \mathbb{Q}(\Omega_k) \mathbb{E}\left[\int_{U}^{U+A} |(h_{kl}-h_{kl}^0)(t-U)| dt \right]  \\
    &= \frac{\mathbb{Q}(\Omega_k)}{A} \int_{\tau_1 + x}^{\tau_1 + A+x} \left[\int_s^{A+s} |h_{kl}-h_{kl}^0|(t-s) dt\right] ds   \geq \mathbb{Q}(\Omega_k) \|h_{kl}-h_{kl}^0\|_1.
   \end{align*}
Moreover, we have
\begin{align*}
\mathbb{Q}(\Omega_k) &\geq \mathbb{Q}(\max_{k' \neq k} N^{k'}[\tau_1, \tau_1 + x+2A] = 0, N^{k}[\tau_1, \tau_1 + x] = 0, N^{k}[\tau_1 + x, \tau_1 + x+A] = 1) \\
&= \mathbb{Q}(\max_{k' \neq k} N^{k'}[\tau_1, \tau_1 + x+2A] = 0) \mathbb{Q}(N^{k}[\tau_1, \tau_1 + x] = 0) \mathbb{Q}(N^{k}[ \tau_1 + x, \tau_1 + x+A] = 1)\\
&= e^{-(K-1)(x+2A)} \times e^{-x} \times A e^{-A} := C'.
\end{align*}
Using  \eqref{eq:parameta} together with \eqref{eq:norm1}, we obtain
\begin{align*}
     \Exf{Z_{1l}} &\geq \frac{C}{L'} \sum_k \frac{\mathbb{Q}(\Omega_k)}{A}  \left(\|h_{kl}-h_{kl}^0\|_1 - A^2 L' \frac{\eta K^2}{1 - \eta K} \|h_{kl}-h_{kl}^0\|_1\right) \geq  \frac{C}{L'} \frac{C'}{2} \sum_k  \|h_{kl} - h^0_{kl}\|_1 \\
    &\geq  C(f_0) (\|r - r_0\|_1 + \norm{h - h_0}_1), \quad C(f_0) =  \frac{C}{L'}\frac{C'}{2(K + \eta K^2 / (1 - \eta K))}.
\end{align*}

If there exists $ l \in [K]$ such that $|r_l^f - r_l^0| \geq \eta (\|r-f - r_0\|_1 + \norm{h - h_0}_1)$, we can use similar arguments as in the proof of Lemma 4 of \cite{donnet18:supplement}:
  $$  \Exf{Z_{1l}} \geq \Probf{\max_k N^k[\tau_1, \tau_1 + A] = 0} \times A |r_l^f - r_l^0|,$$
  and 
 \begin{align*}
  \Probf{\max_k N^k[\tau_1, \tau_1 + A] = 0}   &=  \mathbb{E}_{\mathbb{Q}}\left[\int_{\tau_1}^{\tau_1 + A} \mathcal{L}_t(f)  \mathds{1}_{\max \limits_k N^k[\tau_1, \tau_1 + A] = 0} dt \right]  = \mathbb{E}_{\mathbb{Q}}\left[\int_{\tau_1}^{\tau_1 + A} e^{A \|r\|_1}\mathds{1}_{\max \limits_k N^k[\tau_1, \tau_1 + A] = 0} dt \right] \\
        &\geq Ae^{A \|r_f\|_1} e^{-KA},
\end{align*}
so that 
 $$  \Exf{Z_{1l}} \geq C'(f_0)(\|r_f - r_0\|_1 + \norm{h - h_0}_1), \quad C'(f_0) = A^2 \eta e^{A \norm{r_0}_1 / 2} e^{-KA}.$$
We can conclude that in all cases,
\begin{equation*}
    \Exf{Z_{1l}} \geq \min(C(f_0), C'(f_0))(\|r_f - r_0\|_1 + \norm{h - h_0}_1),
\end{equation*}
\ds{and except in Case 2 of Proposition \ref{prop:relu},}
\begin{equation*}
    \Exf{Z_{1l}} \geq \min(C(f_0), C'(f_0), \frac{1}{L'},1) \norm{f - f_0}_1.
\end{equation*}
\end{proof}

\section{Additional results} \label{sec:results_costa}

\ds{In this section we recall some useful results on the regenerative properties of the nonlinear Hawkes model, which are mainly straightforward extensions of \cite{costa18_supplement} to our multivariate and general nonlinear setup. Besides, we recall the well-known Fuk-Nagaev's inequality.}

%
%
%
%
%
%
%
%
%
%
%
%
%
%
%



The first lemma is an extension of Theorem A.1 \cite{costa18_supplement} for a $M^K/G^K/\infty$ queue when the arrival process is the superposition of $K$ Poisson Point processes, corresponding to $K$ types of customers.

\begin{lemma}\label{lem:queue}
Consider a $M^K/G^K/\infty$ queue with $K$ types of customers that arrive according to a Poisson process with rate $r = (r_1,\dots, r_K)$. Assume that for each $k \in [K]$, the generic service time $H^k$ for a customer of type $k$ satisfies for some $\gamma > 0$ and for any $t \geq 0$:
\begin{equation*}
    \Prob{H^k \geq t} = o(e^{-\gamma t}).
\end{equation*}
Let $\mathcal{T}_1$ the first time of return of the queue to zero.
\begin{enumerate}
    \item If $\norm{r}_1 < \gamma$, then 
    $$\Prob{\mathcal{T}_1 \geq t} \leq \left[1 + \frac{\Ex{e^{\gamma B}}}{\gamma - \|r\|_1 }\right] e^{-\norm{r}_1 t},$$
where $B$ is the length of a busy period of the queue, i.e. $B = \mathcal{T}_1 - V_1$ with $V_1$ the arrival time of the first customer.    
    \item If $\gamma \leq \norm{r}_1$, then for any $0 < \alpha < \gamma$, $   \Prob{\mathcal{T}_1 \geq t} \leq c_1(\alpha) e^{-\alpha t}$,
with
\begin{align*}
    c_1(\alpha) &= \left[1 + \frac{\Ex{e^{\alpha B}}}{\|r\|_1 - \alpha}\right].
\end{align*}
\item $\forall \alpha \leq \norm{r}_1 \wedge \gamma, \: \Ex{e^{\alpha \mathcal{T}_1}} \leq \frac{\norm{r}_1}{\norm{r}_1 + s} \Ex{e^{\alpha B}}< + \infty$.
\end{enumerate}
\end{lemma}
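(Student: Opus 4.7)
The plan is to reduce the multi-type queue to a standard $M/G/\infty$ queue, then exploit independence between the first arrival time and the busy period. Since the $K$ arrival streams are independent Poisson processes with rates $r_k$, their superposition is Poisson with rate $\|r\|_1$, and each arrival can be assigned a type independently with probability $r_k/\|r\|_1$. The resulting single-type queue has generic service time $H=\sum_k \mathds{1}_{\text{type}=k}H^k$, and the hypothesis $\Prob{H^k \geq t}=o(e^{-\gamma t})$ transfers verbatim to $H$. Hence Theorem~A.1 of \cite{costa18} (applied to this effective univariate queue) supplies the exponential moment bound $\Ex{e^{\beta B}}<\infty$ for every $\beta<\gamma$, which is what I will use as a black box.

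Next I would write $\mathcal{T}_1 = V_1 + B$, where $V_1 \sim \mathrm{Exp}(\|r\|_1)$ is the first arrival time and $B$ is the busy period that follows. By the strong Markov property at $V_1$, the variables $V_1$ and $B$ are independent. The tail bound then comes from the decomposition
\[
\Prob{\mathcal{T}_1 \geq t} \leq \Prob{V_1 \geq t} + \int_0^t \|r\|_1 e^{-\|r\|_1 v}\, \Prob{B \geq t-v}\, dv,
\]
followed by Markov's inequality $\Prob{B \geq t-v}\leq \Ex{e^{\beta B}}e^{-\beta(t-v)}$ for a suitable $\beta<\gamma$. The first summand contributes $e^{-\|r\|_1 t}$; the integral reduces to $\Ex{e^{\beta B}}e^{-\beta t}\int_0^t \|r\|_1 e^{(\beta-\|r\|_1) v}dv$, and the regime split is now transparent: in case~1 where $\|r\|_1<\gamma$, I take $\beta=\gamma$ so that $\beta-\|r\|_1>0$ and the integral is bounded by $\tfrac{\|r\|_1}{\gamma-\|r\|_1}e^{(\gamma-\|r\|_1)t}$, yielding the prefactor $1+\tfrac{\|r\|_1 \Ex{e^{\gamma B}}}{\gamma-\|r\|_1}$ in front of $e^{-\|r\|_1 t}$; in case~2 where $\gamma \leq \|r\|_1$, I take any $\alpha<\gamma$ so that $\alpha-\|r\|_1<0$ and the integral is bounded by $\tfrac{\|r\|_1}{\|r\|_1-\alpha}$, giving $c_1(\alpha)e^{-\alpha t}$ since $e^{-\|r\|_1 t}\leq e^{-\alpha t}$.

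Part~3 follows immediately from the independence of $V_1$ and $B$: for $\alpha<\|r\|_1\wedge\gamma$,
\[
\Ex{e^{\alpha \mathcal{T}_1}} = \Ex{e^{\alpha V_1}}\,\Ex{e^{\alpha B}} = \frac{\|r\|_1}{\|r\|_1-\alpha}\,\Ex{e^{\alpha B}} < \infty,
\]
where finiteness of the second factor is again delivered by Theorem~A.1 of \cite{costa18}. The expression in the statement appears to contain a typographical substitution of $s$ for $-\alpha$; the bound above matches the intended inequality.

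The only nontrivial obstacle is justifying the reduction to the univariate queue cleanly: one must check that the type-assignment argument is measurably compatible with the $\infty$-server dynamics, so that cycles of the multi-type queue coincide pathwise with those of the reduced queue. This is a standard thinning/coupling fact (independent marking of a Poisson process), but it is worth stating explicitly so that Costa's result can be invoked without additional hypotheses. Everything downstream (exponential moments, Markov's inequality, evaluation of the Laplace-type integral) is routine.
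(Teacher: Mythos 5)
Your proof is correct and follows essentially the same route as the paper: the key step in both is the reduction of the multi-type queue to a univariate $M/G/\infty$ queue via superposition of the Poisson streams and independent marking, so that Theorem A.1 of Costa et al. applies. The only difference is that the paper cites Theorem A.1 wholesale for all three conclusions, whereas you use it only for the busy-period exponential moment and re-derive the tail bounds from the decomposition $\mathcal{T}_1 = V_1 + B$ (obtaining constants that differ from the stated ones by a harmless factor of $\|r\|_1$, and correctly spotting the typo in part 3).
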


\begin{proof}
In this situation, the arrival process of customers, \emph{regardless of their type}, is a superposition of $K$ Poisson processes with individual rate $r_k, \: k \in [K]$. Consequently, it is equivalent to a Poisson process with rate $\|r\|_1 = \sum_k r_k$. Moreover, the generic service time $H$ of a customer can be written as
   $H = \sum_k \delta_k H^k,$
with $\delta = (\delta_k)_{k \in [K]}$ a one-hot vector indicating the type of customer. We can easily see that
\begin{align*}
&\delta \sim \text{Mult}\left(1, \frac{r_1}{\|r\|_1},\dots, \frac{r_K}{\|r\|_1} \right),  \quad H | \delta \sim \delta \mathcal{P},
\end{align*}
with $\mathcal{P}$ the vector of service time distributions of the $K$ types of customers. We note that the service time $H$ is independent of the arrival process. Consequently, for $t\geq 0$,
    \begin{align*}
        \Prob{H \geq t} &= \sum_k \Prob{H^k \geq t, \: \delta_k = 1} \leq \sum_k \Prob{H^k \geq t}   = o(e^{-\gamma t}).
    \end{align*}
We can therefore conclude that this queue is equivalent to a $M/G/\infty$ queue with rate $\|r\|_1$ and generic service time satisfying $\Prob{H \geq t} = o(e^{-\gamma t})$. We can then apply Theorem A.1 in \cite{costa18_supplement} to obtain the results.
\end{proof}

\ds{The next lemma is a direct multivariate extension of the results in  Propositions 2.1 and 3.1 and Lemma 3.2 of \cite{costa18_supplement}. It introduces the mutually-exciting process dominating (in the sense of measure) a nonlinear Hawkes process.}

\begin{lemma}\label{lem:stoc_domination}
Let $Q$ be a $K$-dimensional Poisson point process on $(0,+\infty) \times (0,+\infty)^K$ with unit intensity. Let $N$ be the Hawkes process with immigration rate $\nu = (\nu_1, \dots, \nu_K), \: \nu_k > 0, \: k \in [K]$, interaction functions $h_{lk}: \mathbb{R}_+ \to \R, \: (l,k) \in [K]^2$ and initial measure $N_0$ on $[-A,0]$ driven by $(Q_t)_{t\geq 0}$ and satisfying one condition of Lemma \ref{lem:existence}. $N$ is the pathwise unique strong solution of the following system of stochastic equations
\begin{equation*}
\begin{cases}
N^k = N^k_0 + \int_{(0,+\infty) \times (0,+\infty)} \delta(u) \mathds{1}_{\theta \leq \lambda^k(u)} Q^k(du, d\theta), & \\
\lambda^k(u) = \phi_k \left(\nu_k + \sum_{l = 1}^K \int_{u-A}^u h_{l k} (u-s) dN^l_s \right), \: u>0, & k \in [K]
\end{cases}.
\end{equation*}
with $\delta(.)$ the Dirac delta function. Consider the similar equation for a point process $N^+$ in which $h_{lk}$ is replaced by $h^+_{lk}$ for any $l,k \in [K]^2$. Then 
\begin{enumerate}
\item there exists a pathwise unique strong solution $N$;
\item the same holds for $N^+$ and $N \leq N^+$ a.s. in the sense of measures. 
\end{enumerate}
This also implies that, with $\Delta \tau_1^+$ defined similarly to $\Delta \tau_1$ in \eqref{eq:def_tau} for the process $N^+$,
\begin{align*}
\Prob{\Delta \tau_1 \leq \Delta \tau_1^+} = 1.
\end{align*}
Moreover, with $\mathcal{T}_1$ defined as in Lemma \ref{lem:queue}, we also have $\Prob{\Delta \tau_1^+ = \mathcal{T}_1} = 1$.
\end{lemma}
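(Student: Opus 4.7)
The strategy is to adapt, step by step, the univariate arguments of \cite{costa18} (Propositions~2.1 and~3.3) to the multivariate setting, exactly as Lemma~\ref{lem:existence} already extends their existence result.

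First, I would establish existence and pathwise uniqueness of $N$ (and independently of $N^+$) by an induction on the successive atoms $U_1<U_2<\cdots$ of the process. Between two consecutive atoms the intensity $\lambda^k(\cdot)$ is deterministic given the past, so the equation can be solved piecewise, and the only thing to check is that the sequence $(U_j)$ does not accumulate: $\lim_{j\to\infty}U_j=+\infty$ a.s. This is precisely the computation already displayed in the proof of Lemma~\ref{lem:existence}, where the bound
\[
\mathbb{E}_f[N(0,t\wedge U_j)]\leq \frac{1}{1-\|S^+\|_1}\Bigl(L\sum_l\nu_l\,t+\sum_k\mathbb{E}_f[N^k[-A,0)]\Bigr)\|S^+\|_1<\infty
\]
is derived under \textbf{(C2)} and forces $U_j\to\infty$; the analogues under \textbf{(C1)} and \textbf{(C3)} are similar. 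The exact same argument, with each $h_{lk}$ replaced by $h^+_{lk}$, yields existence and pathwise uniqueness of $N^+$.

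Second, to prove the coupling inequality $N\leq N^+$, I realize both processes on the same probability space with the common Poisson driver $Q$ and the same initial measure $N_0$, and induct on the joint atoms of $(N,N^+)$ enumerated in increasing time order. At each step the induction hypothesis $N^l_s\leq N^{l+}_s$ for $s<u$ gives, using $h_{lk}\leq h^+_{lk}$ together with $h^+_{lk}\geq 0$,
\[
\nu_k+\sum_l\int_{u-A}^u h_{lk}(u-s)\,dN^l_s \;\leq\; \nu_k+\sum_l\int_{u-A}^u h^+_{lk}(u-s)\,dN^{l+}_s.
\]
Since $\psi$ is assumed non-decreasing throughout the paper, monotonicity of $\phi_k$ then yields $\lambda^k(u)\leq\lambda^{k+}(u)$, hence $\mathds{1}_{\theta\leq\lambda^k(u)}\leq\mathds{1}_{\theta\leq\lambda^{k+}(u)}$ for every atom $(u,\theta)$ of $Q^k$; so each new event of $N$ is an event of $N^+$ and the domination propagates.

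Third, to deduce $\Delta\tau_1\leq\Delta\tau_1^+$ a.s., I would combine the coupling $N\leq N^+$ with the $M^K/G^K/\infty$-queue representation of Lemma~\ref{lem:queue}: the ancestor arrival process is the same Poisson process of rate $\|r\|_1$ for both systems, while the cluster associated with a given ancestor in $N$ is a sub-cluster of the corresponding one in $N^+$ (since all $N$-events are $N^+$-events), so the busy-period length of $N$ is pointwise dominated by that of $N^+$. Equivalently, a direct pathwise argument works: any window $(t-A,t]$ empty for $N^+$ is empty for $N$, so the first time at which the renewal condition $N|_{[t-A,t)}\neq\emptyset$, $N|_{(t-A,t]}=\emptyset$ holds occurs no later for $N$ than for $N^+$. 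The main obstacle is genuinely Step~1: controlling $K$ coupled intensities simultaneously without accumulation of atoms, which is precisely where the condition $\|S^+\|_1<1$ (or boundedness of $\psi$) is used; once this is in place, Steps~2 and~3 are routine coupling arguments with no genuinely multivariate difficulty beyond bookkeeping.
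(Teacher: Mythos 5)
Your proof is correct and follows essentially the same route as the paper, which simply asserts the result as a direct multivariate extension of Propositions 2.1 and 3.3 of \cite{costa18} without spelling out the details. Your Step 1 matches the bound already derived in the proof of Lemma \ref{lem:existence}, Step 2 is the standard coupling induction on atoms using $h_{lk}\leq h_{lk}^+$, $h_{lk}^+\geq 0$ and monotonicity of $\phi_k$, and Step 3 reproduces Costa's comparison of return times to $\emptyset$ under the shared Poisson driver; all three agree with the intended extension.
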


Finally, the last proposition is the Fuk-Nagaev's inequality.

\begin{proposition}\label{fuk-nagaev}
Let $(X_i)_{i\geq 1}$ a sequence of independent and centered random variables with finite variance and $S_n = \sum_{i=1}^n X_i$. With $v = \sum_{i=1}^n \mathbb{V}(X_i)$, for any $x \geq 0$ and $\lambda \geq 0$, it holds that
\begin{align*}
\Prob{S_n \geq \lambda} \leq \sum_{i=1}^n \Prob{X_i > x} + \exp \left \{- \frac{v}{x^2} h\left(\frac{x \lambda}{v}\right) \right \},
\end{align*}
where $h(u) = (1+u) \log (1 + u) - u, \: u \geq 0$.
\end{proposition}

\bibliographystyle{plainnat} 
\bibliography{bib}       

\makeatletter\@input{m_aux.tex}\makeatother